\numberwithin{equation}{section}
\DeclareMathOperator\Hom{Hom}
\DeclareMathOperator\Ext{Ext}
\newcommand{\R}{{\mathbb R}}
\newcommand{\C}{{\mathbb C}}
\newcommand{\Q}{{\mathbb Q}}
\newcommand{\Z}{{\mathbb Z}}
\renewcommand{\Im}{{\operatorname{Im\,}}}
\newtheorem{theo}{{\sc \bf Theorem}}[section]
\newtheorem{cor}[theo]{{\sc \bf Corollary}}
\newtheorem{lem}[theo]{{\sc \bf Lemma}}
\newtheorem{prop}[theo]{{\sc \bf Proposition}}
\newenvironment{example}{\medskip\noindent{\it Example:\/} }{\medskip}
\newenvironment{defin}{\medskip\noindent{\bf Definition:\/} }{\medskip}
\begin{document}

\title{Aspects of Noncommutative Geometry of Bunce-Deddens Algebras}

\author[Klimek]{Slawomir Klimek}
\address{Department of Mathematical Sciences,
Indiana University-Purdue University Indianapolis,
402 N. Blackford St., Indianapolis, IN 46202, U.S.A.}
\email{sklimek@math.iupui.edu}

\author[McBride]{Matt McBride}
\address{Department of Mathematics and Statistics,
Mississippi State University,
175 President's Cir., Mississippi State, MS 39762, U.S.A.}
\email{mmcbride@math.msstate.edu}

\author[Peoples]{J. Wilson Peoples}
\address{Department of Mathematics,
Pennsylvania State University,
107 McAllister Bld., University Park, State College, PA 16802, U.S.A.}
\email{jwp5828@psu.edu}

\thanks{}

\date{\today}

\begin{abstract}
We define and study smooth subalgebras of Bunce-Deddens C$^*$-algebras. We discuss various aspects of noncommutative geometry of Bunce-Deddens algebras including derivations on smooth subalgebras, as well as K-Theory and K-Homology. 
\end{abstract}

\maketitle
\section{Introduction}
The purpose of this paper is to study noncommutative geometry of Bunce-Deddens (BD) algebras \cite{BD1}, \cite{BD2} and their natural smooth subalgebras. Our previous, related investigations of BD algebras are contained in \cite{KMRSW2} and \cite{KM6} and are mostly concerned with classifications of unbounded derivations.

The main objects of study in this paper are smooth subalgebras of Bunce-Deddens algebras. The smooth subalgebras are dense *-subalgebras closed under holomorphic functional calculus and complete in their own stronger locally convex topology. As the name suggests, smooth subalgebras capture differentiable structure in noncommutative geometry, in analogy with ordinary differential geometry and thus are important objects to study. 

The most common way to construct smooth subalgebras in C$^*$-algebras is as smooth elements of an action of a Lie group, see \cite{Bo}. There is a natural circle action on BD algebras, however the resulting smooth subalgebras seem to be too big and in particular don't have a nice classification of derivations on them.

The smooth-subalgebras studied in this paper are more closely related to the work in \cite{N} regarding smooth subalgebras of crossed products $A\rtimes\Z$ of which Bunce-Deddens algebras are examples. A key difference, however, is that the construction in  \cite{N} starts with a dense subalgebra of $A$ with its own Frechet topology, while in our case a choice of such a subalgebra is not obvious. Bunce-Deddens algebras are crossed-product C$^*$-algebras obtained from odometers. Due to the topology of odometers, which are Cantor sets with a minimal action of a homeomorphism, the smooth subalgebras are naturally equipped with inductive limit Frechet (LF) topology.

Derivations naturally arise in differential geometry as vector fields and play an analogous role in noncommutative geometry. We prove that the space of continuous derivations on the smooth subalgebra of a BD algebra modulo inner derivations is just one-dimensional and we exhibit its natural generator.

K-Theory and K-Homology are key invariants in the noncommutative topology of quantum spaces. In addition to reviewing the known $K$-Theory of BD algebras, we include two detailed calculations of $K$-Homology.  One calculation uses Rosenberg and Schochet's Universal Coefficient Theorem, while the other uses Pimsner and Voiculescu's  $6$-term exact sequence. 

Smooth subalgebras of noncommutative spaces are also naturally present in studying spectral triples and in cyclic cohomology which we do not address in this paper. Examples of spectral triples for BD algebras are discussed in \cite{HSWZ}.

The paper is organized as follows.  In section 2 we review supernatural numbers and Bunce-Deddens algebras described as concrete C$^*$-algebras of operators in a Hilbert space.  In section 3 we discuss smooth subalgebras of Bunce-Deddens algebras and their properties.  In section 4 we classify continuous derivations on the smooth subalgebras from section 3.  We also discuss examples that show the necessity of the assumption that the derivations are continuous.  In section 5 we discuss the $K$-Theory and $K$-Homology of  Bunce-Deddens algebras.  Finally in section 6, the appendix, we provide an example based guide for computations in homological algebra involving the Ext functor.  

\section{Preliminaries}
We begin this section by recalling some relevant information regarding supernatural numbers.  
\subsection{Supernatural Numbers}
A {\it supernatural number} $S$ is defined as the formal product:
\begin{equation*} 
S= \prod_{p-\textrm{prime}} p^{\varepsilon_p}, \;\;\; \varepsilon_p \in\{0,1, \cdots, \infty\}\,.
\end{equation*}
When a supernatural number $S$ has the property that $\sum \varepsilon_p < \infty$, we say that $S$ is a finite supernatural number. Otherwise, $S$ is said to be an infinite supernatural number. Given another supernatural number
\begin{equation*}
S'= \prod_{p-\textrm{prime}} p^{\varepsilon_p'},
\end{equation*} 
their product is defined by
\begin{equation*}
SS':= \prod_{p-\textrm{prime}} p^{\varepsilon_p + \varepsilon_p'}\,.
\end{equation*}
We say that a supernatural number $S$ divides $\mathcal{S}$ if there is some supernatural number $S'$ such that $\mathcal{S}=SS'$. In terms of the powers, this is equivalent to  $\varepsilon_p(S)\le\varepsilon_p(\mathcal{S})$ for every prime $p$.

For the remainder of the paper we work with a fixed supernatural number $S$. Consider the set of divisors of $S$:
\begin{equation*}
\mathcal{L}_S=\{l: \; l|S, l<\infty\}.
\end{equation*}
Notice that $(\mathcal{L}_S, \le)$ is a directed set where $l_1 \le l_2$ if and only if $l_1 | l_2 |S$. We say that a sequence $\{l_n\}$ of divisors of $S$ converges to $S$, $\lim_{n\to\infty}l_n=S$, if the exponents converge:
\begin{equation*}
\lim_{n\to\infty}\varepsilon_p(l_n)=\varepsilon_p(S).
\end{equation*}

\subsection{Odometers \cite{D}}
For each pair of divisors $l,k$ satisfying $l \geq k$, consider the ring homomorphism $\pi_{kl}: \Z/ l\Z \rightarrow\Z/ k\Z$ defined by
\begin{equation*}
\pi_{kl}(x) =x\ (\textrm{mod } k)
\end{equation*}
which satisfy 
\begin{equation*}
\pi_{kj} = \pi_{kl} \circ \pi_{lj} \textnormal{ for all } k\le l\le j\,.
\end{equation*}
Then the inverse limit of the system can be denoted as:
\begin{equation*}
\Z/S\Z:=\lim_{\underset{l\in \mathcal{L}_S}{\longleftarrow}} \Z/ l\Z =\left\{\{x_l\}\in\prod\limits_{l\in \mathcal{L}_S}\Z/ l\Z : \pi_{kl}(x_l)=x_k\right\}\,,
\end{equation*}
and let 
\begin{equation*}
\pi_l: \Z/S\Z \ni \{x_l\}\mapsto x_l \in \Z/l\Z
\end{equation*} 
be the corresponding homomorphisms. Notice that for finite supernatural number $S$, the above notation is consistent with the usual meaning of the symbol $\Z/S\Z$. A more interesting observation is that if $S=p^\infty$ for a prime p, then the above limit is equal to $\Z_p$, the ring of $p$-adic integers, see for example \cite{R}.
In general we have the following simple consequence of the Chinese Reminder Theorem.

\begin{prop}
If $S= \prod\limits_{\substack{p-\textrm{prime} \\ {\varepsilon_p \neq 0}}} p^{\varepsilon_p}$, then $\Z/S\Z \cong \prod\limits_{\substack{p-\textrm{prime} \\ {\varepsilon_p \neq 0}}}  \Z/{p^{\varepsilon_p}}\Z$. 
 \end{prop}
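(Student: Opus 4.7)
The plan is to construct explicit ring homomorphisms between $\Z/S\Z$ and $\prod_{p|S}\Z/p^{\varepsilon_p}\Z$ (where the latter factor is itself interpreted as the inverse limit $\varprojlim_{k\le\varepsilon_p,\,k<\infty}\Z/p^k\Z$ when $\varepsilon_p=\infty$) and verify that they are mutually inverse. The backbone of the argument is the classical Chinese Remainder Theorem applied separately at each finite level, then ``passed through'' the inverse limit.

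First I would restrict the inverse system. For each prime $p$ with $\varepsilon_p\neq 0$, the collection $\{p^k : k\le\varepsilon_p,\ k<\infty\}$ is a cofinal directed subset of $\mathcal{L}_S$ intersected with the powers of $p$, so $\Z/p^{\varepsilon_p}\Z$ is unambiguously defined as an inverse limit in the finite case as well (where the system stabilizes). Next I would record the key ingredient: for any $l\in\mathcal{L}_S$ with factorization $l=\prod_{p}p^{k_p(l)}$ (only finitely many $k_p(l)>0$), CRT gives a natural ring isomorphism
\begin{equation*}
\chi_l\colon \Z/l\Z \xrightarrow{\ \cong\ } \prod_{p|S}\Z/p^{k_p(l)}\Z,
\end{equation*}
where factors with $k_p(l)=0$ are trivial. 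Crucially, these $\chi_l$ are compatible with the projections $\pi_{kl}$ in the sense that, for $k\le l$ in $\mathcal{L}_S$, the CRT decomposition of $\pi_{kl}$ is the componentwise projection $\Z/p^{k_p(l)}\Z\to\Z/p^{k_p(k)}\Z$.

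Then I would build the forward map $\Phi\colon\Z/S\Z\to\prod_{p|S}\Z/p^{\varepsilon_p}\Z$ by sending $\{x_l\}\in\Z/S\Z$ to the tuple whose $p$-th coordinate is the compatible sequence $\{x_{p^k}\}_{k\le\varepsilon_p,\,k<\infty}$. Compatibility is immediate from $\pi_{p^j p^k}(x_{p^k})=x_{p^j}$ for $j\le k$. For the inverse map $\Psi$, given $(y_p)_{p|S}$ with $y_p=\{y_p^{(k)}\}\in\varprojlim_k\Z/p^k\Z$, I would set $\Psi((y_p))_l:=\chi_l^{-1}\bigl((y_p^{(k_p(l))})_{p|S}\bigr)\in\Z/l\Z$. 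The compatibility of $\Psi((y_p))$ as an element of the inverse limit $\Z/S\Z$ is exactly the assertion that the $\chi_l$'s intertwine $\pi_{kl}$ with componentwise projection, combined with the fact that each $y_p$ is itself a compatible sequence.

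Finally I would check that $\Phi$ and $\Psi$ are mutually inverse ring homomorphisms. This reduces to (i) $\chi_{p^k}$ is the identity on $\Z/p^k\Z$ (via the trivial factors), giving $\Phi\circ\Psi=\mathrm{id}$; and (ii) CRT together with the definition of $\Phi$ recovers each $x_l$ from its prime-power components, giving $\Psi\circ\Phi=\mathrm{id}$. The main obstacle is not conceptual but bookkeeping: one must keep track of the two different index sets (divisors of $S$ versus pairs of primes and exponents) and verify that the CRT isomorphisms $\chi_l$ commute with the structural maps of both inverse systems simultaneously. Once that compatibility is in hand, the isomorphism is formal.
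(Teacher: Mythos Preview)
Your proposal is correct and is precisely the argument the paper has in mind: the paper states this proposition without proof, merely noting it is ``a simple consequence of the Chinese Remainder Theorem,'' and your construction---applying CRT at each finite level $l\in\mathcal{L}_S$ and checking compatibility with the inverse-limit structure maps---is exactly how one makes that remark rigorous.
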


If the ring $\Z/S\Z$ is equipped with the Tychonoff topology it forms a compact, Abelian topological group. Thus it has a unique normalized Haar measure $d_Hx$. In addition, if $S$ is an infinite supernatural number then $\Z/S\Z$ is a Cantor set \cite{W}. 

\subsection{Dense Cyclic Subgroup}
$\Z / S \Z$ contains a dense copy of $\Z$, which we now describe. Define a homomorphism $q: \Z \rightarrow \Z/S\Z$ by:
\begin{equation}\label{q_def}
q(x)=\{x\ (\textrm{mod } l)\}\in  \Z/S\Z\subseteq \prod\limits_{l\in \mathcal{L}_S}\Z/ l\Z.
\end{equation}
and let $q_l: \Z \rightarrow \Z/l\Z$ be the usual quotient maps. We have the following simple property:
\begin{equation*}
\pi_l\circ q=q_l\,.
\end{equation*}
From this fact, along with the structure of cylinder sets, the following proposition can be deduced: 
\begin{prop}
The range of $q$ is dense in $\Z/S\Z$. 
\end{prop}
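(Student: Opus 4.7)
The plan is to exploit the fact that the inverse-limit topology on $\Z/S\Z$ has a particularly clean subbasis consisting of cylinder sets $\pi_l^{-1}(\{a\})$ for $l\in\mathcal{L}_S$ and $a\in\Z/l\Z$. First I would observe that $\Z/S\Z$ carries the subspace topology from the Tychonoff product $\prod_{l\in\mathcal{L}_S}\Z/l\Z$ of discrete spaces, so that the sets $\pi_l^{-1}(\{a\})$ indeed form a subbasis for its topology.

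Next I would use that $(\mathcal{L}_S,\leq)$ is a directed set (noted in the excerpt) to reduce finite intersections of subbasic cylinders to single cylinders. Given $l_1,\ldots,l_n\in\mathcal{L}_S$ and elements $a_i\in\Z/l_i\Z$, pick $l=\textrm{lcm}(l_1,\ldots,l_n)\in\mathcal{L}_S$, so that $l_i\le l$ for every $i$. The relations $\pi_{l_i}=\pi_{l_i l}\circ\pi_l$ then imply that a nonempty basic open set $\bigcap_{i=1}^{n}\pi_{l_i}^{-1}(\{a_i\})$ is a union of cylinders of the form $\pi_l^{-1}(\{a\})$, where $a\in\Z/l\Z$ ranges over the elements with $\pi_{l_il}(a)=a_i$ for each $i$. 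Therefore density of $q(\Z)$ in $\Z/S\Z$ is equivalent to $q(\Z)\cap \pi_l^{-1}(\{a\})\neq\emptyset$ for every $l\in\mathcal{L}_S$ and every $a\in\Z/l\Z$.

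The final step is to verify this intersection condition directly. Given $l\in\mathcal{L}_S$ and $a\in\Z/l\Z$, choose any integer representative $x\in\Z$ of $a$, so that $q_l(x)=a$. Using the identity $\pi_l\circ q=q_l$ stated just above the proposition, we obtain
\begin{equation*}
\pi_l(q(x))=q_l(x)=a,
\end{equation*}
so $q(x)\in\pi_l^{-1}(\{a\})$, establishing density.

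There is no serious obstacle: the only conceptual point is the passage from a finite intersection of cylinders to a single cylinder, which is a routine consequence of the directedness of $\mathcal{L}_S$ and of the compatibility $\pi_{kj}=\pi_{kl}\circ\pi_{lj}$. The argument is essentially the standard one showing that $\Z$ is dense in any profinite completion taken over a cofinal system of finite quotients.
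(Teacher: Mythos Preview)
Your argument is correct and is precisely the approach the paper indicates: it states that the proposition follows ``from this fact [namely $\pi_l\circ q=q_l$], along with the structure of cylinder sets,'' without spelling out the details. Your proof supplies exactly those details, reducing via directedness of $\mathcal{L}_S$ to single cylinders and then hitting each cylinder using $\pi_l\circ q=q_l$.
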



\subsection{Locally Constant Functions \cite{HR}}
We denote by $\mathcal{E}(\Z/S\Z)$ the space of locally constant functions on $\Z/S\Z$. This is a dense subspace of the space of continuous functions on $\Z/S\Z$.  Given an $f\in\mathcal{E}(\Z/S\Z)$, consider the sequence: 
\begin{equation*}
a_f(k)= f(q(k)),\ \ k\in\Z\,.
\end{equation*} 
We have the following observation:
\begin{prop}\label{loc_const}
If $f\in\mathcal{E}(\Z/S\Z)$, then there exists $l\in\mathcal{L}_S$ such that 
\begin{equation*}
a_f(k+l)= a_f(k)
\end{equation*} 
for every $k\in\Z$. Conversely, if $a(k)$ is an $l$-periodic sequence for some $l\in\mathcal{L}_S$, then there is a unique $f\in\mathcal{E}(\Z/S\Z)$ such that $a(k)=a_f(k)$.
\end{prop}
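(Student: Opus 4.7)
The plan is to prove the two directions separately, with the main idea being that the cylinder sets $\pi_l^{-1}(A)$ with $A \subseteq \Z/l\Z$ and $l \in \mathcal{L}_S$ form a basis for the Tychonoff topology on $\Z/S\Z$, and that $\mathcal{L}_S$ is closed under taking least common multiples.

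For the forward direction, given $f \in \mathcal{E}(\Z/S\Z)$, I would use local constancy together with compactness of $\Z/S\Z$ to produce a single $l \in \mathcal{L}_S$ through which $f$ factors. Concretely, around each point $x \in \Z/S\Z$ there is a basic neighborhood of the form $\pi_{l_x}^{-1}(\{\pi_{l_x}(x)\})$ on which $f$ is constant. By compactness, finitely many such neighborhoods, using divisors $l_1, \ldots, l_n$, cover $\Z/S\Z$. Setting $l = \operatorname{lcm}(l_1, \ldots, l_n)$, which lies in $\mathcal{L}_S$ because each $l_i$ divides $S$ and thus so does their lcm, the compatibility $\pi_{l_i l} = \pi_{l_i}$ (composed appropriately) ensures that $f$ is constant on fibers of $\pi_l$. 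Hence $f = g \circ \pi_l$ for a unique $g: \Z/l\Z \to \C$. Using the identity $\pi_l \circ q = q_l$ already recorded in the text, I then compute
\begin{equation*}
a_f(k+l) = g(\pi_l(q(k+l))) = g(q_l(k+l)) = g(q_l(k)) = a_f(k),
\end{equation*}
since $q_l$ is $l$-periodic by definition.

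For the converse, given an $l$-periodic sequence $a(k)$ with $l \in \mathcal{L}_S$, I would define $g: \Z/l\Z \to \C$ by $g(k \bmod l) := a(k)$, which is well-defined by $l$-periodicity and automatically continuous since $\Z/l\Z$ is discrete. Then $f := g \circ \pi_l$ is continuous and factors through a finite discrete space, so it is locally constant, i.e., $f \in \mathcal{E}(\Z/S\Z)$. Using $\pi_l \circ q = q_l$ once more gives $a_f(k) = g(q_l(k)) = a(k)$. Uniqueness of $f$ follows because $q(\Z)$ is dense in $\Z/S\Z$ (the preceding proposition) and any two continuous functions agreeing on a dense set must coincide.

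I expect no serious obstacle: the only point requiring care is the compactness argument that promotes pointwise local factorizations through various $\pi_{l_x}$ to a global factorization through a single $\pi_l$, and this reduces cleanly once one notes that $\mathcal{L}_S$ is closed under finite lcms. The rest is bookkeeping with the identity $\pi_l \circ q = q_l$ and a density argument for uniqueness.
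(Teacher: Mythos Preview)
Your proposal is correct and follows the same underlying idea as the paper: the paper's proof is only the one-line remark that $f$ is locally constant on $\Z/S\Z$ iff there is some $l\mid S$ such that $f(x)=f(y)$ whenever $x$ and $y$ agree in all coordinates indexed by multiples of $l$ (i.e., $f$ factors through $\pi_l$). You supply precisely the standard compactness-and-lcm argument that justifies this fact, together with the routine bookkeeping via $\pi_l\circ q=q_l$ and the density argument for uniqueness, so there is nothing to correct.
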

The proof follows from the fact that $f$ is locally constant on $\Z/S\Z$ iff there is $l|S$ such that $f(x)=f(y)$ for all $x=\{x_k\}$ and $y=\{y_k\}$ in $\Z/S\Z$ such that $x_k=y_k$ if $l | k$.

\subsection{Bunce-Dedden Algebras}
To introduce Bunce-Deddens algebras we follow 
\cite{KM6} which more generally described algebras associated with infinite compact monothetic groups.  
Recall that a {\it monothetic} group is a topological group that has a dense cyclic subgroup.  For an infinite supernatural number $S$, the group $\Z/S\Z$ is an infinite compact monothetic group with $\Z$, the image of the map $q$, being its dense cyclic subgroup.  

Let $H$ be the $\ell^2$ Hilbert space of the cyclic subgroup, that is 
\begin{equation*}
H=\ell^2(\{q(l)\}_{l\in\Z})\,,
\end{equation*} 
which is naturally isomorphic with $\ell^2(\Z)$.  Let $\{E_l\}_{l\in\Z}$ be the canonical basis in $H$ and let $U:H\to H$ be the shift operator on $H$:
\begin{equation*}
UE_l = E_{l+1}\,.
\end{equation*}

For a continuous function $f\in C(\Z/S\Z)$ we define an operator $M_f:H\to H$ via:
\begin{equation*}
M_fE_l = f(q(l))E_l\,.
\end{equation*}
Notice that $M_f$ is a diagonal multiplication operator on $H$.  Due to the density of the subgroup $\{q(l)\}_{l\in\Z}$, we immediately obtain:

\begin{equation*}
\|M_f\|= \underset{l\in\Z}{\textrm{sup }}|f(q(l))|=\underset{x\in \Z/S\Z}{\textrm{sup }}|f(x)| = \|f\|_\infty\,.
\end{equation*}
The algebra of operators generated by the $M_f$'s is thus isomorphic to $C(\Z/S\Z)$. So, while the $M_f$'s carry all the information about the space $\Z/S\Z$, the operator $U$ reflects the dynamics of the map $\beta$ on $\Z/S\Z$ given by addition of the generator of the cyclic subgroup:
\begin{equation}\label{beta_def}
\beta(x)=x+q(1).
\end{equation}
This relationship is summarized by:
\begin{equation*}
UM_fU^{-1}=M_{f\circ\beta}\,.
\end{equation*}

We define the algebra $B_S$ to be the C$^*$-algebra generated by operators $U$ and $M_f$:
\begin{equation*}
B_S=C^*\{U,M_f: f\in C(\Z/S\Z)\}\,.
\end{equation*} 
$B_S$ is isomorphic with the crossed product algebra:
\begin{equation*}
B_S\cong C(\Z/S\Z)\rtimes_\beta \Z\,.
\end{equation*} 
Indeed, observe that $\Z$ is amenable, the action of $\Z$ on $\Z/S\Z$ given by $\beta$ is a free action, and $\beta$ is a minimal homeomorphism. Thus, the crossed product is simple and equal to the reduced crossed product, see, for instance, \cite{F}. Clearly, the operators $U$ and $M_f$ define a representation of the crossed product $C(\Z/S\Z)\rtimes_\beta\Z$, and the algebra they generate must be isomorphic to it, by simplicity of the crossed product.  This crossed product is known as the Bunce-Deddens algebra.

\subsection{Polynomial Subalgebra}
Let $\chi$ be a character of the group $\Z/S\Z$. We have $\chi\in\mathcal{E}(\Z/S\Z)$ and so $\chi$ comes from a $l$ periodic function on $\Z$, by an abuse of notation also denoted by $\chi$. Any $l$ periodic character is of the form $\chi_l^k$ where
$$\chi_l(q(x)) = e^{2\pi ix/l},$$
and $k$ is an integer.

The algebra $B_S$ has a natural dense $*$-subalgebra $\mathcal{B}_S$ of polynomials in $U$, $U^{-1}$, and the $M_\chi$'s, where $\chi$ is a character of $\Z/S\Z$. We have:
\begin{equation*}
\mathcal{B}_S = \left\{\sum_nU^nM_{f_n}: f_n\in\mathcal{E}(\Z/S\Z),\textrm{ finite sums}\right\}\,.
\end{equation*}
Alternatively, any element $a$ in $\mathcal{B}_S$ can be written as
\begin{equation}\label{CharDecomp}
a=\sum_{k=0}^{l-1}F_k(U)M_{\chi_l^k}\,,
\end{equation}
where $F_k$ are trigonometric polynomials.

\subsection{Label Operator}
It is often convenient to use the following diagonal label operator on $H$:  
\begin{equation*}
\mathbb{L}E_l = lE_l\,.
\end{equation*}
If $\{b(l)\}_{l\in\Z}$ is a bounded sequence, then by the functional calculus, $b(\mathbb L)$ is a bounded diagonal operator given by:
\begin{equation*}
b(\mathbb L) E_l= b(l) E_l\,,
\end{equation*}
and all bounded diagonal operators can be expressed this way.  Moreover, we have the following commutation relation:
\begin{equation}\label{comm_rel}
b(\mathbb L)U = Ub(\mathbb L + I)\,.
\end{equation}

\section{Smooth Subalgebras}
\subsection{Definitions}
To define the space of smooth elements of the Bunce-Deddens algebra we need the following terminology.  Let $\{a_n\}$ be a sequence of complex numbers.  We say $\{a_n\}$ is {\it Rapid Decay}, RD, if for every $k\ge0$, there exists a constant $C_k$ such that
\begin{equation*}
n^k|a_n|\le C_k\,.
\end{equation*}
We say a family of locally constant functions is {\it Uniformly Locally Constant}, ULC, if there exists a divisor $l$ of $S$ such that for every $f$ in the family we have
\begin{equation*}
f(x+l) = f(x)
\end{equation*}
for all $x\in\Z/S\Z$.   

We define the space of smooth elements of the Bunce-Deddens algebra, $B_S^\infty$, to be the space of elements in $B_S$ whose Fourier coefficients are ULC and norms are RD.  Using Equation \ref{comm_rel} the ULC condition on Fourier coefficients can be written as:
\begin{equation*}
B_S^\infty=\left\{b= \sum_{n\in\Z}U^nM_{f_n} : \{\|f_n\|\}\textrm{ is }RD, \textrm{ there is an }l|S,\,\,U^lbU^{-l}=b\right\}\,.
\end{equation*}
It's immediate that $B_S^\infty$ is indeed a nonempty subset of $B_S$. 

\subsection{M-Norms}
Our next goal is to prove that $B_S^\infty$ is a $*$-subalgebra of $B_S$. This is achieved in several steps. The first is the definition of a derivation \cite{KR}.

Let $A$ be an algebra. A linear map $\delta:A\to A$ is called a {\it derivation} if the Leibniz rule holds:
\begin{equation*}
\delta(ab) = a\delta(b) + \delta(a)b
\end{equation*}
for all $a,b\in A$.   An example of a derivation $B^\infty_S \to B^\infty_S$ is given by
\begin{equation*}
\delta_{\mathbb L}(b) = [\mathbb L,b].
\end{equation*}
Since $[\mathbb L,U^n]=nU^n$ and $[\mathbb L,M_f]=0$, we see that
\begin{equation*}
\textrm{if}\quad b = \sum_{n\in\Z}U^nM_{f_n}\,,\quad\textrm{then}\quad\delta_{\mathbb L}(b) = \sum_{n\in\Z}U^n nM_{f_n}\,.
\end{equation*}
Thus if $\{\|f_n\|\}$ are RD, then so too are $\{n\|f_n\|\}$ and hence $\delta_{\mathbb L}(b)$ is in $B_S^\infty$.  This derivation is very fundamental, and is used extensively in the sequel. 

In the following lemma we describe a class of norms on $B_S$ that we call $M$-norms. These $M$-norms are used to provide an alternative description of the RD condition in the definition of $B_S^\infty$ (see Proposition \ref{B_infty_descript} below).
\begin{lem}\label{M-norms}
Let $A$ be an algebra with a submultiplicative norm $\|\cdot\|$ and let $\delta:A\to A$ be any derivation.  For any $a\in A$ and $M$ a nonnegative integer, define recursively:
\begin{equation*}
\|a\|_{M+1} = \|a\|_M + \|\delta(a)\|_M\,,
\end{equation*}
with $\|a\|_0:=\|a\|$.  Then, $\|\cdot\|_M$ is a submultiplicative norm on $A$.  Moreover
\begin{equation*}
\|a\|_M = \sum_{j=0}^M\
\begin{pmatrix}
M \\ j
\end{pmatrix}\|\delta^{(j)}(a)\|\,.
\end{equation*}
\end{lem}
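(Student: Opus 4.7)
The plan is to prove the explicit formula first and then use it to verify the norm axioms and submultiplicativity.

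First I would establish the closed-form expression $\|a\|_M = \sum_{j=0}^M \binom{M}{j} \|\delta^{(j)}(a)\|$ by induction on $M$. The base case $M=0$ is immediate since $\binom{0}{0}\|\delta^{(0)}(a)\| = \|a\|$. For the inductive step, I would use the recursive definition together with the inductive hypothesis applied to both $a$ and $\delta(a)$, reindex the second sum by $j \mapsto j-1$, and combine the coefficients via Pascal's identity $\binom{M}{j} + \binom{M}{j-1} = \binom{M+1}{j}$.

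Once the formula is in hand, the norm axioms are essentially automatic. Absolute homogeneity and the triangle inequality follow term-by-term from the corresponding properties of $\|\cdot\|$ together with the linearity of each iterate $\delta^{(j)}$. Positive-definiteness follows because the $j=0$ term in the formula is $\|a\|$, so $\|a\|_M = 0$ forces $\|a\| = 0$ and hence $a = 0$.

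The main content of the lemma is submultiplicativity, and this is the step I expect to be the technical core. The key tool is the generalized Leibniz formula
\begin{equation*}
\delta^{(n)}(ab) = \sum_{k=0}^n \binom{n}{k} \delta^{(k)}(a)\, \delta^{(n-k)}(b),
\end{equation*}
which I would prove by a short induction on $n$ directly from the Leibniz rule $\delta(ab) = \delta(a) b + a \delta(b)$, again using Pascal's identity. Substituting this into the formula for $\|ab\|_M$ and using submultiplicativity of $\|\cdot\|$ yields an upper bound in which, after setting $i = k$ and $j = n-k$, the coefficient of $\|\delta^{(i)}(a)\|\, \|\delta^{(j)}(b)\|$ equals $\binom{M}{i+j}\binom{i+j}{i}$, with the sum running over $i+j \le M$. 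On the other hand, expanding $\|a\|_M \|b\|_M$ via the closed-form gives a sum over all $0 \le i, j \le M$ with coefficient $\binom{M}{i}\binom{M}{j}$.

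The problem therefore reduces to the combinatorial inequality $\binom{M}{i+j}\binom{i+j}{i} \le \binom{M}{i}\binom{M}{j}$ whenever $i+j \le M$. Clearing factorials, this is equivalent to $(M-i)(M-i-1)\cdots(M-i-j+1) \le M(M-1)\cdots(M-j+1)$, which holds factor-by-factor since $i \ge 0$. The extra terms on the right with $i+j > M$ are nonnegative, completing the estimate $\|ab\|_M \le \|a\|_M \|b\|_M$.
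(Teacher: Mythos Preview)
Your argument is correct, but the paper's proof of submultiplicativity is considerably simpler than yours. Instead of expanding via the closed-form formula and the generalized Leibniz rule, the paper proves $\|ab\|_{M+1}\le\|a\|_{M+1}\|b\|_{M+1}$ by induction directly from the recursive definition: assuming $\|\cdot\|_M$ is submultiplicative, one writes
\[
\|ab\|_{M+1}=\|ab\|_M+\|\delta(a)b+a\delta(b)\|_M\le \|a\|_M\|b\|_M+\|\delta(a)\|_M\|b\|_M+\|a\|_M\|\delta(b)\|_M,
\]
and the right-hand side is dominated by $(\|a\|_M+\|\delta(a)\|_M)(\|b\|_M+\|\delta(b)\|_M)=\|a\|_{M+1}\|b\|_{M+1}$. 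This avoids the higher Leibniz formula and the combinatorial inequality $\binom{M}{i+j}\binom{i+j}{i}\le\binom{M}{i}\binom{M}{j}$ entirely. Your route has the virtue of making the underlying combinatorics explicit and would generalize more readily if one wanted sharper coefficient-wise control, but for the statement as given the paper's inductive argument is the cleaner choice. The closed-form formula is proved the same way in both.
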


\begin{proof}
Proving that $\|\cdot\|_M$ is a norm is on $A$ is an immediate consequence of induction and the fact that $\|\cdot\|$ is a norm on $A$.  To prove submultiplicativeness, induction can also be used.  The base case $M=0$ is immediate as $\|\cdot\|$ is submultiplicative.  Thus given an $M$, suppose $\|\cdot\|_M$ is submultiplicative. We have the following calculation:
\begin{equation*}
\begin{aligned}
\|ab\|_{M+1} &= \|ab\|_M + \|\delta(ab)\|_M = \|ab\|_M + \|\delta(a)b+ a\delta(b)\|_M\\
&\le \|a\|_M\|b\|_M + \|\delta(a)\|_M\|b\|_M + \|a\|_M\|\delta(b)\|_M \\
&\le(\|a\|_M + \|\delta(a)\|_M)(\|b\|_M + \|\delta(b)\|_M) = \|a\|_{M+1}\|b\|_{M+1}\,.
\end{aligned} 
\end{equation*}
Thus $\|\cdot\|_M$ is submultiplicative.  

It remains to prove the formula in the last part of the statement of the lemma. Again, we use induction. The base case $M=0$ is trivial. Suppose the formula is true for a given $M$ and consider
\begin{equation*}
\|a\|_{M+1} = \|a\|_M + \|\delta(a)\|_M = \sum_{j=0}^M
\begin{pmatrix}
M \\ j
\end{pmatrix}\|\delta^{(j)}(a)\| + \sum_{j=0}^M
\begin{pmatrix}
M \\ j
\end{pmatrix}\|\delta^{(j+1)}(a)\|\,.
\end{equation*}
Resumming the second sum and using the binomial coefficient reduction formula, we obtain
\begin{equation*}
\begin{aligned}
\|a\|_{M+1} &= \|a\|_0 + \sum_{j=1}^M\left[
\begin{pmatrix}
M \\ j
\end{pmatrix} + 
\begin{pmatrix}
M \\ j-1
\end{pmatrix}\right]\|\delta^{(j)}(a)\| + \|a\|_{M+1} \\
&=\sum_{j=0}^{M+1}
\begin{pmatrix}
M+1 \\ j
\end{pmatrix}\|\delta^{(j)}(a)\|\,. 
\end{aligned}
\end{equation*}
This completes the proof.
\end{proof}

For the remainder of the paper we focus on the following particular $M$-norm on $B_S$:
\begin{equation*}
\|b\|_M = \sum_{j=0}^M\
\begin{pmatrix}
M \\ j
\end{pmatrix}\|\delta_{\mathbb L}^{(j)}(b)\|\,.
\end{equation*}
In general, this norm is not well-defined on all of $B_S$ as $\mathbb L$ is an unbounded operator. We must first properly define $\|\delta_{\mathbb L}^{(j)}(b)\|$. We do this in the following discussion. Let $\mathcal{D}\subseteq\ell^2(\Z)$ be the following subspace:
\begin{equation*}
\mathcal{D} = \left\{x\in\ell^2(\Z): x = \sum_{k\in\Z}x_kE_k\textrm{ with }\{x_k\}\textrm{-RD}\right\}\,.
\end{equation*}
Then pick $x\in\mathcal{D}$ and a $b\in B_S$, with $\{b_{kl}\}$ the matrix coefficients of $b$ with respect to the canonical basis $\{E_l\}$, and consider the following calculation:
\begin{equation*}
\begin{aligned}
\|\delta_{\mathbb L}(b)x\|^2 &= \left\|\sum_{k\in\Z}x_k\delta_{\mathbb L}(b)E_k\right\|^2 = \left\|\sum_{k,l\in\Z}lx_kb_{kl}E_l-\sum_{k,l\in\Z}kx_kb_{kl}E_l\right\|^2 \\
&= \left\|\sum_{l\in\Z}\left(\sum_{k\in\Z}(l-k)b_{kl}x_k\right)E_l\right\|^2\,.
\end{aligned}
\end{equation*}
The inner sum over $k$ is finite since $\{b_{kl}\}$ is a bounded sequence as $b\in B_S$, $\{x_k\}$ is a RD sequence and $l-k$ is a polynomial in $k$. Therefore we can define
\begin{equation*}
\|\delta_{\mathbb L}(b)x\|^2 = \sum_{l\in\Z}\left|\sum_{k\in\Z}(l-k)b_{kl}x_k\right|^2
\end{equation*}
which may or may not be finite, but is a series of nonnegative terms over $l$.  Thus for  $b\in B_S$ we define $\|\delta_{\mathbb L}(b)\|$ by
\begin{equation*}
\|\delta_{\mathbb L}(b)\| = \underset{x\in\mathcal{D}}{\textrm{sup }}\frac{\|\delta_{\mathbb L}(b)x\|}{\|x\|}\,.
\end{equation*}
Using similar considerations, we can define $\|\delta_{\mathbb L}^{(j)}(b)\|$. Now $\|b\|_M\in[0,\infty]$ makes sense for every $b\in B_S$.

\subsection{Basic Properties}
Next we want to relate the RD condition for $b\in B_S^\infty$ to finiteness of $M$-norms.  This is done using a $1$-parameter group of automorphisms of $B_S$ that is given by the following equation:
\begin{equation*}
\rho_\theta(b) = e^{2\pi i\theta\mathbb{L}}be^{-2\pi i\theta\mathbb{L}}\quad\textrm{ for }b\in B_S,
\end{equation*}
where $\theta\in\R/2\pi\Z$. We have the following formulas:
\begin{equation*}
\rho_\theta(U)=e^{2\pi i\theta}U\textrm{ and } \rho_\theta(b(\mathbb{L}))=b(\mathbb{L})\,.
\end{equation*}
It immediately follows that $\rho_\theta:\mathcal{B}_S\to\mathcal{B}_S$ and that $\rho_\theta:B_S^\infty\to B_S^\infty$.

Define $E:B_S\to C^*\{M_f:f\in C(\Z/S\Z)\}\cong C(\Z/S\Z)$ via
\begin{equation*}
E(b) =\int_0^1\rho_\theta(b)\,d\theta\,.
\end{equation*}
It's easily checked that $E$ is an expectation on $B_S$.  For a $b\in B_S$ we define the {\it $n$-th Fourier coefficient}, $b_n$ by the following:
\begin{equation*}
b_n=E(U^{-n}b)= \int_0^1\rho_\theta(U^{-n}b)\,d\theta\, = \int_0^1e^{-2\pi in\theta}U^{-n}\rho_\theta(b)\,d\theta.
\end{equation*}
From this definition, it's clear that $b_n\in C^*\{M_f:f\in C(\Z/S\Z)\}$.  We have the following lemma:
\begin{lem}\label{Fourier_coeff}
Let $b$ and $b'$ be elements of $B_S$.  If $b_n=b_n'$ for every $n$, then $b=b'$.  Moreover, if $\{\|b_n\|\}$ is a RD sequence, then
\begin{equation*}
b=\sum_{n\in\Z}U^nb_n
\end{equation*}
where the sum is norm convergent.
\end{lem}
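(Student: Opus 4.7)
Proof plan. I would view the proof through the lens of Fourier analysis for the circle action $\rho_\theta$ on $B_S$. The key observation is that for fixed $b\in B_S$, the map $F\colon \R/\Z\to B_S$, $F(\theta)=\rho_\theta(b)$, is norm-continuous: on the dense polynomial subalgebra $\mathcal{B}_S$ we have $\rho_\theta(U^nM_f)=e^{2\pi in\theta}U^nM_f$, which is trivially continuous in $\theta$, and since each $\rho_\theta$ is isometric this continuity extends to all of $B_S$. Unwinding the definition of $b_n$ gives
$$\int_0^1 e^{-2\pi in\theta}F(\theta)\,d\theta=U^n b_n,$$
so $U^nb_n$ is nothing but the $n$-th Banach-space-valued Fourier coefficient of $F$.

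Once this identification is in place, I would invoke Fej\'er's theorem for continuous Banach-valued functions on $\R/\Z$: the Ces\`aro means
$$\sigma_N(F)(\theta)=\sum_{|n|\le N}\Bigl(1-\tfrac{|n|}{N+1}\Bigr)e^{2\pi in\theta}U^n b_n$$
converge to $F(\theta)$ in norm, uniformly in $\theta$. Setting $\theta=0$ gives $\sum_{|n|\le N}(1-|n|/(N+1))U^nb_n\to b$ in $B_S$. Applying this to $b-b'$ immediately yields the uniqueness statement: if $b_n=b_n'$ for every $n$, then every Fej\'er mean of $b-b'$ vanishes, hence $b=b'$.

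For the second part, suppose $\{\|b_n\|\}$ is RD. Then $\sum_{n\in\Z}\|b_n\|<\infty$ (take $k=2$), so $\tilde b:=\sum_{n\in\Z}U^n b_n$ converges absolutely in $B_S$. By the uniqueness assertion it suffices to show $\tilde b_m=b_m$ for each $m$. Since $b_n\in C^*\{M_f:f\in C(\Z/S\Z)\}$ lies in the fixed-point algebra of $\rho_\theta$, we have $\rho_\theta(U^{n-m}b_n)=e^{2\pi i(n-m)\theta}U^{n-m}b_n$, and because $E$ is contractive we may push it through the absolutely convergent sum and compute
$$\tilde b_m=E(U^{-m}\tilde b)=\sum_{n\in\Z}U^{n-m}b_n\int_0^1 e^{2\pi i(n-m)\theta}\,d\theta=b_m.$$
Hence $\tilde b=b$.

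The only genuine analytic input is Fej\'er's theorem for vector-valued continuous functions on the circle; this is a verbatim transcription of the scalar case, since the Fej\'er kernel is a nonnegative summability kernel and the proof only uses the triangle inequality. Everything else is a matter of checking that the expectation $E$ is contractive, that $\rho_\theta$ is isometric and strongly continuous, and that each $b_n$ lies in the fixed-point algebra — all of which are built into the setup.
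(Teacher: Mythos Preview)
Your proof is correct and follows essentially the same approach as the paper: both rely on standard Fourier analysis for the circle action $\rho_\theta$, with the paper simply citing ``standard techniques from Fourier analysis'' for the uniqueness part where you spell out the Fej\'er/Ces\`aro argument explicitly. Your treatment of the second part is in fact more complete than the paper's, which shows absolute convergence of $\sum_n U^n b_n$ but does not explicitly verify that the limit equals $b$; your step of computing the Fourier coefficients of $\tilde b$ and invoking the uniqueness assertion closes that gap cleanly.
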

\begin{proof}
The first part follows from the standard techniques from Fourier analysis, see for example \cite{K}.  

If $\{\|b_n\|\}$ is a RD sequence, then, there exists a constant $C$ such that $(|n|^2+1)\|b_n\|\le C$.  Thus
\begin{equation*}
\left\|\sum_{|n|\le j}U^nb_n\right\|\le \sum_{|n|\le j}\|b_n\|\le \sum_{|n|\le j}\frac{1}{|n|^2+1}\le \sum_{n\in\Z}\frac{1}{|n|^2+1}
\end{equation*}
which is an absolutely convergent sum.  Thus by the Weierstrass Test, $\sum_n U^nb_n$ is a norm convergent sum.  It follows that $b_n$ are the Fourier coefficients of $b\in B_S$ and $b=\sum_{n\in\Z}U^nb_n\,.$
\end{proof}

With this lemma, we have the following equivalent definition of $B_S^\infty$:
\begin{prop}
\label{B_infty_descript}
\begin{equation*}
B_S^\infty = \{b\in B_S: \|b\|_M<\infty\,,\textrm{ for every }M,\textrm{ there is an }l|S,\,\,U^lbU^{-l}=b\}\,.
\end{equation*}
\end{prop}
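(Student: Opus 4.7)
The plan is to use the Fourier expansion $b = \sum_n U^n M_{f_n}$ as the bridge between the two formulations. The key observation is that $\delta_{\mathbb{L}}$ multiplies the $n$-th Fourier summand by $n$, so formally $\delta_{\mathbb{L}}^{(j)}(b) = \sum_n n^j U^n M_{f_n}$; equivalently, the $n$-th Fourier coefficient of $\delta_{\mathbb{L}}^{(j)}(b)$ is $n^j M_{f_n}$. This will link the RD decay of $\{\|f_n\|\}$ to the finiteness of all $M$-norms $\|b\|_M$. Separately, the relation $U^l M_f U^{-l} = M_{f \circ \beta^l}$ together with $\beta^l(x) = x + q(l)$ will link the ULC condition on the family $\{f_n\}$ to the covariance $U^l b U^{-l} = b$.

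For the forward inclusion, given $b \in B_S^\infty$ in the original sense, the RD hypothesis makes $\sum_n |n|^j \|f_n\|$ finite for every $j$, so by Lemma \ref{Fourier_coeff} the series $\sum_n n^j U^n M_{f_n}$ is norm convergent with norm bounded by $\sum_n |n|^j \|f_n\|$, hence $\|\delta_{\mathbb{L}}^{(j)}(b)\| < \infty$; summing the binomial combination yields $\|b\|_M < \infty$ for every $M$. For the covariance, $U^l b U^{-l} = \sum_n U^n M_{f_n \circ \beta^l}$ by the intertwining identity, and the ULC assumption forces $f_n \circ \beta^l = f_n$ for every $n$, giving $U^l b U^{-l} = b$.

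For the reverse inclusion, I would assume $\|b\|_M < \infty$ for every $M$ and $U^l b U^{-l} = b$ for some $l \mid S$, and set $b_n := E(U^{-n} b) \in C^*\{M_f : f \in C(\Z/S\Z)\}$, writing $b_n = M_{f_n}$ with $f_n \in C(\Z/S\Z)$. Using the matrix-coefficient formula derived in the passage preceding the statement, $(\delta_{\mathbb{L}}^{(j)}(b))_{kl} = (l-k)^j b_{kl}$, so the $n$-th Fourier coefficient of $\delta_{\mathbb{L}}^{(j)}(b)$ is precisely $n^j M_{f_n}$. The contractivity $\|b_n\| \le \|b\|$ of Fourier projection then yields $|n|^j \|f_n\| \le \|\delta_{\mathbb{L}}^{(j)}(b)\| \le \|b\|_j$, which is the RD property for $\{\|f_n\|\}$. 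Lemma \ref{Fourier_coeff} reconstructs $b = \sum_n U^n M_{f_n}$ as a norm convergent sum, and the assumption $U^l b U^{-l} = b$ combined with uniqueness of Fourier coefficients forces $f_n \circ \beta^l = f_n$, equivalently $a_{f_n}(k+l) = a_{f_n}(k)$ for every $k$; Proposition \ref{loc_const} then promotes each $f_n$ to $\mathcal{E}(\Z/S\Z)$ with the common period $l$, certifying ULC.

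The principal technical obstacle is the norm bound $|n|^j \|f_n\| \le \|\delta_{\mathbb{L}}^{(j)}(b)\|$ used in the reverse direction, because $\|\delta_{\mathbb{L}}^{(j)}(b)\|$ was set up through a supremum over the dense subspace $\mathcal{D}$ and $\delta_{\mathbb{L}}^{(j)}(b)$ is not a priori an element of $B_S$ to which the projection $E \circ U^{-n}(\cdot)$ directly applies. I would handle this by verifying at the matrix-coefficient level that whenever this supremum is finite it extends to a genuine bounded operator on $\ell^2(\Z)$ with matrix entries $(l-k)^j b_{kl}$; this legitimizes applying Fourier projection and yields the required bound via the contractivity of $E$.
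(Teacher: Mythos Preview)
Your proposal is correct and follows the same approach as the paper: both directions hinge on the identity $E(U^{-n}\delta_{\mathbb L}^{(j)}(b)) = n^j M_{f_n}$ together with the contractivity of $E$, exactly as you outline (and the paper, like you, glosses over the technical point that $\delta_{\mathbb L}^{(j)}(b)$ must first be recognized as a bounded operator before $E$ applies). Note that the displayed definition of $B_S^\infty$ in the paper already encodes the ULC condition as $U^l b U^{-l} = b$, so the paper's proof treats only the RD $\Leftrightarrow$ finite $M$-norms equivalence; your separate verification of ULC $\Leftrightarrow$ covariance is correct but redundant here.
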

\begin{proof}
In order to prove the proposition, we only need to check that finite $M$-norms is equivalent to RD Fourier coefficients.  If $b\in B_S^\infty$, then $\{\|f_n\|\}$ are RD.  A straightforward calculation shows that:
\begin{equation*}
\textrm{if }b = \sum_{n\in\Z} U^nM_{f_n}\,,\quad\textrm{then}\quad\delta_{\mathbb L}^{(j)}(b) = \sum_{n\in\Z}n^jU^nM_{f_n}\,,
\end{equation*}
and consequently we have:
\begin{equation*}
||\delta_{\mathbb L}^{(j)}(b)||\leq \sum_{n\in\Z}|n|^j||f_n||\,.
\end{equation*}
It follows that $\|b\|_M$ is finite for every $M$.  

On the other hand, let $b\in B_S$ and assume that $\|b\|_M$ is finite for every $M$.  Then we get the following formula for the Fourier coefficients $M_{f_n}$ of $b$:
\begin{equation*}
E\left(U^{-n}\delta_{\mathbb L}^{(j)}(b)\right) = n^jE(U^{-n}b) = n^jM_{f_n}\,,
\end{equation*} and hence
\begin{equation*}
n^j\|f_n\| = n^j\|M_{f_n}\|\le \|\delta_{\mathbb L}^{(j)}(b)\|\le \|b\|_M\,.
\end{equation*}
Therefore if $\|b\|_M$ is finite for every $M$, it follows that $\{\|f_n\|\}$ are RD.  
\end{proof}

We now give a brief discussion of the natural topology on $B_S^\infty$ and continuity of linear maps on it. Define the following spaces:
\begin{equation*}
B_{S,\,l}^\infty = \{b\in B_S^\infty : U^lbU^{-l}=b\textrm{ for some }l|S\}\,.
\end{equation*}
Notice that $B_{S,\,l}^\infty$ is a closed subspace of $B_S^\infty$.  Moreover if $l$ and $l'$ are two different divisors of $S$ with $l<l'$, then it follows that $B_{S,\,l}^\infty\subsetneqq B_{S,\,l'}^\infty$.  Also $B_{S,\,l}^\infty$ is a Fr\'echet space with respect to $\|\cdot\|_M$ and
\begin{equation*}
B_S^\infty = \bigcup_{l|S} B_{S,\,l}^\infty
\end{equation*}
as a strict inductive limit.  To establish some of the results below it is also useful to represent this inductive limit in the following way. Let $1<l_1|\,l_2|\cdots$ be a strictly increasing sequence of divisors of $S$. Notice that 
\begin{equation*}
B_S^\infty = \bigcup_{n=1}^\infty B_{S,\,l_n}^\infty\,.
\end{equation*}

Let $T:B_S^\infty\to B_S^\infty$ be a linear map.  Then by the general property of inductive limits, see \cite{RS}, $T$ is continuous if and only if $T:B_{S,\,l_n}^\infty\to B_S^\infty$ is continuous for every $n$. Additionally we have the following useful result.

\begin{lem}\label{reed_simon_1}
Let $T:B_S^\infty\to B_S^\infty$ be a continuous linear map.  Then for every $n$, there exists an $m$ such that $T(B_{S,\,l_n}^\infty)\subseteq B_{S,\,l_m}^\infty$.
\end{lem}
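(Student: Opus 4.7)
The plan is to argue by contradiction, exploiting the fact that $B_S^\infty$ is a strict inductive limit of Fréchet spaces. Fix $n$ and suppose, contrary to the conclusion, that for every $m$ there exists $a_m \in B_{S,\,l_n}^\infty$ with $T(a_m) \notin B_{S,\,l_m}^\infty$. The first move is a diagonal rescaling: since the topology on $B_{S,\,l_n}^\infty$ is generated by the countable family of seminorms $\|\cdot\|_M$, I can choose positive scalars $\lambda_m$ decaying fast enough that $b_m := \lambda_m a_m \to 0$ in $B_{S,\,l_n}^\infty$ (for instance, picking $\lambda_m$ so small that $\|b_m\|_m \leq 1/m$ will do). Since each $B_{S,\,l_k}^\infty$ is defined by the homogeneous condition $U^{l_k}\,\cdot\,U^{-l_k}=\cdot$, scaling preserves non-membership, so $T(b_m) \notin B_{S,\,l_m}^\infty$ as well.

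Next, by continuity of $T$ as a map $B_{S,\,l_n}^\infty \to B_S^\infty$, the sequence $T(b_m)$ converges to $0$ in $B_S^\infty$. Hence the set $\{T(b_m) : m \geq 1\} \cup \{0\}$ is compact, and in particular bounded, in the strict LF-space $B_S^\infty$. I would then invoke the standard theorem on strict inductive limits of Fréchet spaces (see \cite{RS}) that every bounded subset of such a space is contained in one of the defining steps: there exists $m_0$ with $T(b_m) \in B_{S,\,l_{m_0}}^\infty$ for all $m$. Taking any $m \geq m_0$ gives $T(b_m) \in B_{S,\,l_{m_0}}^\infty \subseteq B_{S,\,l_m}^\infty$, contradicting our construction.

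The main technical input is the bounded-set-lies-in-a-step property of strict LF-spaces, which rests on the compatibility of the subspace topologies along the filtration $B_{S,\,l_1}^\infty \subseteq B_{S,\,l_2}^\infty \subseteq \cdots$; this is precisely what ``strict'' inductive limit asserts, and it is stated in the paragraph preceding the lemma. An equivalent formulation that sidesteps compactness is to directly observe that a sequence which converges in $B_S^\infty$ must eventually lie in some single step $B_{S,\,l_{m_0}}^\infty$ and converge there; applied to $T(b_m)$, the same contradiction with $T(b_m) \notin B_{S,\,l_m}^\infty$ emerges for $m \geq m_0$. Either way, the only genuinely nontrivial ingredient is this classical structural fact about strict LF-spaces; the rest is a routine rescaling argument.
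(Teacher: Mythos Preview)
Your argument is correct and relies on the same structural input as the paper's proof: in a strict LF-space, a convergent sequence must lie in a single step (Theorem V.17 in \cite{RS}). The rescaling trick that forces a sequence to converge to zero is also identical.

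The organizational difference is that the paper proceeds directly rather than by contradiction: it picks a countable set $\{g_k\}$ whose span is dense in $B_{S,\,l_n}^\infty$, rescales it to a null sequence $f_k=g_k/k$, applies the Reed--Simon theorem to $\{T(f_k)\}$ to land in a single $B_{S,\,l_m}^\infty$, and then finishes with a density-plus-closedness argument (using that $B_{S,\,l_m}^\infty$ is closed in $B_S^\infty$). Your contradiction route is slightly cleaner in that it bypasses both the separability hypothesis and the final density step; the paper's route, on the other hand, gives an explicit $m$ tied to a chosen spanning set. Neither buys anything essential over the other.
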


\begin{proof}
Since $B_{S,\,l_n}^\infty$ is separable, choose a sequence $\{g_k\}_{k=1}^\infty$ with $g_k\in B_{S,\,l_n}^\infty$ for every $k$, such that $\textrm{span}_k\{g_k\}$ is dense in $B_{S,\,l_n}^\infty$ and $\|g_k\|_1=1$.  Define $f_k = g_k/k$ and so $f_k\to0$ in $B_{S,\,l_n}^\infty$.  Since $T$ is continuous, we have $T(f_k)\to0$ in $B_S^\infty$.  Thus by Theorem 5.17 in \cite{RS}, it follows that there exists an $m$ such that $T(f_k)\subseteq B_{S,\,l_m}^\infty$ for every $k$.  However,
\begin{equation*}
T(\textrm{span}_k\{g_k\})=T(\textrm{span}_k\{f_k\})\subseteq B_{S,\,l_m}^\infty\,.
\end{equation*}
But $\textrm{span}_k\{g_k\}$ is dense in $B_{S,\,l_n}^\infty$ and $B_{S,\,l_m}^\infty$ is complete.  Thus it follows that $T(B_{S,\,l_n}^\infty)\subseteq B_{S,\,l_m}^\infty$.
\end{proof}

The following lemma is a special case, adapted to our scenario, of Theorem 5.2 in \cite{RS}.
\begin{lem}\label{reed_simon_2}
Let $T:B_{S,\,l_n}^\infty\to B_{S,\,l_m}^\infty$ be a linear map.  Then $T$ is continuous if and only if for every $M$, there exist an $M'$ and a constant $C=C(M)$ such that for every $b\in B_{S,\,l_n}^\infty$, the following inequality holds:
\begin{equation*}
\|T(b)\|_M \le C\|b\|_{M'}\,.
\end{equation*}
\end{lem}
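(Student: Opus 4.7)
The plan is to invoke the standard characterization of continuity of linear maps between Fréchet spaces in terms of the defining seminorms. The key structural observation is that both $B_{S,\,l_n}^\infty$ and $B_{S,\,l_m}^\infty$ are Fréchet spaces whose topologies are generated by the countable, totally ordered family of submultiplicative norms $\|\cdot\|_M$ for $M = 0, 1, 2, \ldots$, where ``totally ordered'' means $\|b\|_M \le \|b\|_{M+1}$ (immediate from the recursive definition $\|b\|_{M+1} = \|b\|_M + \|\delta_{\mathbb L}(b)\|_M$ given in Lemma \ref{M-norms}). Consequently a basis of neighborhoods of zero in either space is given by the sets $V_{M,\varepsilon} = \{c : \|c\|_M < \varepsilon\}$, indexed by $M \in \N$ and $\varepsilon > 0$.

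For the forward direction, suppose $T$ is continuous and fix $M \ge 0$. Since $T$ is linear, continuity is equivalent to continuity at $0$, so $T^{-1}(V_{M,1})$ is a neighborhood of $0$ in $B_{S,\,l_n}^\infty$ and hence contains some basic set $V_{M',\delta}$. A homogeneity (scaling) argument then yields the estimate: for any $b \in B_{S,\,l_n}^\infty$ with $\|b\|_{M'} > 0$, the element $(\delta/2)\,b/\|b\|_{M'}$ lies in $V_{M',\delta}$, so applying $T$ and using linearity gives $\|T(b)\|_M \le (2/\delta)\|b\|_{M'}$. The degenerate case $\|b\|_{M'} = 0$ is handled by the same scaling: for every $\lambda > 0$ one has $\|\lambda b\|_{M'} = 0 < \delta$, hence $\|T(\lambda b)\|_M < 1$, forcing $\|T(b)\|_M = 0$. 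Setting $C = 2/\delta$ completes this direction.

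The converse is immediate from the bound. Given any neighborhood $V_{M,\varepsilon}$ of $0$ in the target, the hypothesis provides $M'$ and $C$ with $T(V_{M',\varepsilon/C}) \subseteq V_{M,\varepsilon}$. Hence $T$ is continuous at $0$, and by linearity continuous everywhere.

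The only subtle point I anticipate is the step ``$T^{-1}(V_{M,1})$ contains a single $V_{M',\delta}$''; in a general locally convex space this would only produce a finite intersection $\bigcap_i V_{M_i,\delta_i}$, but the total ordering of our seminorm family collapses such an intersection into one $V_{M',\delta}$ with $M' = \max_i M_i$ and $\delta = \min_i \delta_i$. No other obstacle is expected, as the argument is essentially bookkeeping once the Fréchet structure is recognized; this is precisely why the lemma is quoted as a specialization of Theorem 5.2 in \cite{RS}.
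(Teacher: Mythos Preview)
Your proof is correct and essentially unpacks the citation: the paper does not give its own argument for this lemma, merely stating that it is a special case of Theorem~5.2 in \cite{RS}. Your write-up supplies exactly the standard $\varepsilon$-ball/scaling argument behind that theorem, specialized to the increasing family $\{\|\cdot\|_M\}$. One minor remark: since each $\|\cdot\|_M$ is an actual norm (not merely a seminorm, as $\|\cdot\|_0$ is the $C^*$-norm), the degenerate case $\|b\|_{M'}=0$ forces $b=0$ and is handled trivially without the separate scaling argument you give.
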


Together, Lemmas \ref{reed_simon_1} and \ref{reed_simon_2} imply the following theorem:
\begin{theo}
$T:B_S^\infty\to B_S^\infty$ be a linear map.  $T$ is continuous if and only if for every $n$, there exists an $m$ such that $T(B_{S,\,l_n}^\infty)\subseteq B_{S,\,l_m}^\infty$ and for every $M$, there exist an $M'$ and a constant $C=C(M)$ such that for every $b\in B_S^\infty$, the following inequality holds:
\begin{equation*}
\|T(b)\|_M \le C\|b\|_{M'}\,.
\end{equation*}
\end{theo}

\begin{cor} $\delta_{\mathbb L}:B_S^\infty\to B_S^\infty$ is a continuous derivation. 
\end{cor}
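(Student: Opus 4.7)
The plan is to apply the theorem just proved, which reduces continuity of $\delta_{\mathbb L}$ on the inductive limit $B_S^\infty$ to two separate checks: (a) that $\delta_{\mathbb L}$ respects the filtration by the spaces $B_{S,\,l_n}^\infty$, and (b) that for each $M$ there is some $M'$ and a constant $C$ with $\|\delta_{\mathbb L}(b)\|_M \le C\|b\|_{M'}$. The fact that $\delta_{\mathbb L}$ even maps $B_S^\infty$ into itself was already observed right after its definition, so that is not an issue.

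For step (a) I would show the stronger statement that $\delta_{\mathbb L}(B_{S,\,l_n}^\infty) \subseteq B_{S,\,l_n}^\infty$, so that one may take $m=n$. The key ingredient is the commutation relation \eqref{comm_rel}, which with the choice $b(\mathbb L)=\mathbb L$ gives $\mathbb L U = U(\mathbb L + I)$, equivalently $U^{l}\mathbb L U^{-l} = \mathbb L - lI$ for every $l \in \Z$. For $b \in B_{S,\,l_n}^\infty$, so that $U^{l_n}bU^{-l_n}=b$, a short calculation yields
\begin{equation*}
U^{l_n}\delta_{\mathbb L}(b)U^{-l_n} = (\mathbb L - l_n I)b - b(\mathbb L - l_n I) = [\mathbb L, b] = \delta_{\mathbb L}(b),
\end{equation*}
so $\delta_{\mathbb L}(b)$ is again $l_n$-periodic in the ULC sense, establishing (a).

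For step (b) I would use the explicit formula $\|b\|_M = \sum_{j=0}^M \binom{M}{j}\|\delta_{\mathbb L}^{(j)}(b)\|$ from Lemma \ref{M-norms}. Applying this to $\delta_{\mathbb L}(b)$ and then reindexing,
\begin{equation*}
\|\delta_{\mathbb L}(b)\|_M = \sum_{j=0}^{M}\binom{M}{j}\|\delta_{\mathbb L}^{(j+1)}(b)\| = \sum_{j=1}^{M+1}\binom{M}{j-1}\|\delta_{\mathbb L}^{(j)}(b)\| \le \sum_{j=0}^{M+1}\binom{M+1}{j}\|\delta_{\mathbb L}^{(j)}(b)\| = \|b\|_{M+1},
\end{equation*}
so the required estimate holds with $M' = M+1$ and $C = 1$.

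Combining (a) and (b) with the theorem proves continuity; the fact that $\delta_{\mathbb L}$ is a derivation is a general algebraic property of the commutator. I do not see any genuine obstacle: the only point requiring minor care is the manipulation in (a) which hinges on recognizing that conjugation by $U^{l_n}$ shifts $\mathbb L$ by a scalar, so the scalar shifts cancel inside the commutator. Everything else is formal.
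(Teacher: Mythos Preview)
Your proposal is correct and is exactly the intended application of the preceding theorem; the paper states the corollary without proof precisely because the two checks you carry out are immediate. One tiny simplification for step (b): the recursive definition $\|b\|_{M+1} = \|b\|_M + \|\delta_{\mathbb L}(b)\|_M$ already gives $\|\delta_{\mathbb L}(b)\|_M \le \|b\|_{M+1}$ directly, without reindexing binomial coefficients.
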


To conclude this section, we prove that $B^\infty_S$ is a $*$-subalgebra of $B_S$ closed under the holomorphic functional calculus \cite{Bo}. 

\begin{prop}
$B_S^\infty$ is a $*$-subalgebra of $B_S$ with respect to $\|\cdot\|_M$.
\end{prop}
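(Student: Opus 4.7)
The plan is to check the closure conditions making $B_S^\infty$ a $*$-subalgebra, working throughout with the characterization from Proposition \ref{B_infty_descript}: $b\in B_S^\infty$ iff $\|b\|_M<\infty$ for every $M$ and $U^lbU^{-l}=b$ for some $l\,|\,S$. Linearity is trivial, with the periodicity for $a+b$ witnessed by $l=\operatorname{lcm}(l_1,l_2)$ whenever $l_1$ works for $a$ and $l_2$ for $b$. This same $l$ handles the product case through
\begin{equation*}
U^labU^{-l}=(U^laU^{-l})(U^lbU^{-l})=ab,
\end{equation*}
while the norm estimate $\|ab\|_M\le\|a\|_M\|b\|_M<\infty$ is exactly the submultiplicativity from Lemma \ref{M-norms} applied to the derivation $\delta_{\mathbb L}$.

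For closure under the adjoint, the periodicity transfers directly: taking $*$ of $U^lbU^{-l}=b$ and invoking $U^*=U^{-1}$ yields $U^lb^*U^{-l}=b^*$ with the same divisor. For the $M$-norm condition the cleanest route is through Fourier coefficients. Starting from $b=\sum_n U^nM_{f_n}$ with $\{\|f_n\|\}$ RD, and using the commutation $U^nM_fU^{-n}=M_{f\circ\beta^n}$ to pull shifts past multiplications, I would rewrite
\begin{equation*}
b^*=\sum_n M_{\overline{f_n}}U^{-n}=\sum_m U^mM_{g_m},\qquad g_m=\overline{f_{-m}}\circ\beta^{-m},
\end{equation*}
so that $\|g_m\|=\|f_{-m}\|$ inherits rapid decay. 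Combined with the uniqueness of Fourier expansions (Lemma \ref{Fourier_coeff}) and Proposition \ref{B_infty_descript}, this places $b^*$ back in $B_S^\infty$.

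The main technical point I anticipate is the adjoint step. A more structural alternative would be to derive the operator identity $\delta_{\mathbb L}^{(j)}(b^*)=(-1)^j\bigl(\delta_{\mathbb L}^{(j)}(b)\bigr)^*$ from the self-adjointness of $\mathbb L$ and read off $\|b^*\|_M=\|b\|_M$ at one stroke; however, making this rigorous requires care with adjoints of unbounded operators defined only on the dense subspace $\mathcal{D}$, which is precisely what the Fourier-coefficient approach sidesteps.
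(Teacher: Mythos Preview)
Your argument is correct and mirrors the paper's proof closely: both use $l=\operatorname{lcm}(l_1,l_2)$ for the commutation condition and the submultiplicativity of $\|\cdot\|_M$ from Lemma~\ref{M-norms} for closure under products. The only divergence is in the adjoint step: the paper simply asserts that $\|\cdot\|_M$ is $*$-preserving (your ``structural alternative'' $\|b^*\|_M=\|b\|_M$), whereas you route through the Fourier expansion to show $\{\|g_m\|\}=\{\|f_{-m}\|\}$ is RD. Your version is more explicit and avoids the unbounded-operator subtlety you flagged, at the cost of a small extra computation; the paper's version is terser but leaves the verification of $*$-preservation to the reader.
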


\begin{proof}
Since $\|\cdot\|_M$ is a norm and the relation $a=U^lau^{-l}$ for some divisor $l$ of $S$ is linear, it follows that $B_S^\infty$ is closed under addition and scalar multiplication.  Moreover since $\|\cdot\|_M$ is $*$-preserving, it follows that $B_S^\infty$ is closed under taking $*$'s.  It only remains to show that $B_S^\infty$ is closed under multiplication.  Let $a$ and $b$ be two elements in $B_S^\infty$ and let $l_1$ and $l_2$ be two divisors of $S$ such that
\begin{equation*}
a= U^{l_1}aU^{-l_1} \quad\textrm{and}\quad b = U^{l_2}bU^{-l_2}\,.
\end{equation*}
Set $l=\textrm{lcm}(l_1,l_2)$, then
\begin{equation*}
U^labU^{-l} = U^laU^{-l}U^lbU^{-l} = ab\,.
\end{equation*}
Finally, since $\|\cdot\|_M$ is submultiplicative by Lemma \ref{M-norms}, it follows that $ab\in B_S^\infty$.
\end{proof}

\subsection{Holomorphic Calculus} 
We verify below that $B_S^\infty$ is closed under the holomorphic functional calculus and hence has the same K-Theory as $B_S$.
\begin{lem}\label{b_inverse}
Let $b\in B_S^\infty$ such that $b$ is invertible in $B_S$.  Then, $b^{-1}\in B_S^\infty$.
\end{lem}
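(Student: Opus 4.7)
The plan is to verify that $b^{-1}$ satisfies both conditions defining $B_S^\infty$ given in Proposition \ref{B_infty_descript}: the uniformly-locally-constant condition $U^l b^{-1} U^{-l} = b^{-1}$ for some $l|S$, and finiteness of every $M$-norm. The ULC condition is immediate: taking inverses in the identity $U^l b U^{-l} = b$ yields $U^l b^{-1} U^{-l} = b^{-1}$, with the same divisor $l$.

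For the $M$-norms, by Lemma \ref{M-norms} it suffices to show $\|\delta_{\mathbb L}^{(j)}(b^{-1})\| < \infty$ for every $j \ge 0$. I would proceed by induction on $j$. The base case $j=0$ is trivial since $b^{-1} \in B_S$ is bounded. For the inductive step, I would differentiate the identity $b\, b^{-1} = I$ by applying the Leibniz rule $j$ times; noting that $\delta_{\mathbb L}^{(j)}(I) = 0$ for $j \ge 1$ and isolating the $k=0$ term, one gets
\begin{equation*}
\delta_{\mathbb L}^{(j)}(b^{-1}) \;=\; -\,b^{-1}\sum_{k=1}^{j} \binom{j}{k}\, \delta_{\mathbb L}^{(k)}(b)\, \delta_{\mathbb L}^{(j-k)}(b^{-1}).
\end{equation*}
Every factor on the right-hand side is a bounded operator: $b^{-1}$ is bounded since $b$ is invertible in $B_S$, the $\delta_{\mathbb L}^{(k)}(b)$ are bounded because $b \in B_S^\infty$, and the $\delta_{\mathbb L}^{(j-k)}(b^{-1})$ for $k \ge 1$ are bounded by the inductive hypothesis. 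Hence the right-hand side defines a bounded operator, and the induction closes.

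The main technical point to be careful about is justifying Leibniz at this level of generality, since $\delta_{\mathbb L}^{(j)}$ is not an algebraic derivation on all of $B_S$ but rather is defined case-by-case through the matrix-coefficient expression $\delta_{\mathbb L}^{(j)}(a)_{lk} = (l-k)^j a_{lk}$, which need not yield a bounded operator. I would handle this by verifying Leibniz directly on matrix entries: expanding $(l-k)^j = ((l-m)+(m-k))^j$ by the binomial theorem reproduces exactly the Leibniz formula for the matrix coefficients of a product, provided the intermediate sums $\sum_m$ converge. At each inductive step these sums do converge, because they compute the matrix entries of a composition of operators already established to be bounded. Once boundedness of each $\delta_{\mathbb L}^{(j)}(b^{-1})$ is secured, Lemma \ref{M-norms} packages it into $\|b^{-1}\|_M < \infty$ for every $M$, and together with the ULC condition this gives $b^{-1} \in B_S^\infty$.
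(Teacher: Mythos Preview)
Your proof is correct. Both you and the paper handle the ULC condition identically, and both argue by induction that all $M$-norms of $b^{-1}$ are finite; the difference is in how the induction is organized.

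You induct on the order $j$ of the derivative, expanding $\delta_{\mathbb L}^{(j)}(bb^{-1})=0$ via the higher Leibniz rule to solve for $\delta_{\mathbb L}^{(j)}(b^{-1})$ in terms of lower-order data. The paper instead inducts directly on $M$ using the recursive definition $\|a\|_{M+1}=\|a\|_M+\|\delta_{\mathbb L}(a)\|_M$ and the submultiplicativity of $\|\cdot\|_M$ established in Lemma~\ref{M-norms}: from $\delta_{\mathbb L}(b^{-1})=-b^{-1}\delta_{\mathbb L}(b)\,b^{-1}$ and $1\le\|b\|_M\|b^{-1}\|_M$ one gets the closed-form bound $\|b^{-1}\|_{M+1}\le\|b^{-1}\|_M^2\,\|b\|_{M+1}$. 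The paper's route is shorter because it never needs the iterated Leibniz formula---only the first-order identity, applied once per step in a norm that is already known to be submultiplicative. Your route is more explicit and, as you note, forces you to confront the domain issue for $\delta_{\mathbb L}$ head-on; your matrix-coefficient justification is a reasonable way to do that, and in fact the same justification is implicitly needed in the paper's proof for the single identity $\delta_{\mathbb L}(b^{-1})=-b^{-1}\delta_{\mathbb L}(b)\,b^{-1}$.
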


\begin{proof}
Since $b\in B_S^\infty$, there exists a divisor $l$ of $S$ such that $U^lbU^{-l}= b$.  Since $b$ and $U$ are invertible in $B_S$, it immediately follows that $U^lb^{-1}U^{-l}=b^{-1}$.  

To check that the $\|b^{-1}\|_M$ is finite, notice that
\begin{equation*}
\begin{aligned}
\|b^{-1}\|_1 &= \|b^{-1}\| + \|\delta_{\mathbb{L}}(b^{-1})\|\le \|b^{-1}\| + \|b^{-1}\|^2\|\delta_{\mathbb{L}}(b)\| \\
&\le \|b^{-1}\|^2\|b\| + \|b^{-1}\|^2\|\delta_{\mathbb{L}}(b)\| = \|b^{-1}\|^2\|b\|_1\,.
\end{aligned}
\end{equation*}
By definition we know that 
\begin{equation*}
\|b\|_{M+1} = \|b\|_M + \|\delta_{\mathbb{L}}(b)\|_M\,,
\end{equation*}
and so by a similar argument we obtain
\begin{equation*}
\|b^{-1}\|_2 \leq \|b^{-1}\|_1^2\|b\|_2\,.
\end{equation*}
Proceeding inductively we get
\begin{equation*}
\|b^{-1}\|_{M+1}\le \|b^{-1}\|_M^2\|b\|_{M+1}\,.
\end{equation*}
Hence $\|b^{-1}\|_M$ is finite for all $M$ and thus $b^{-1}\in B_S^\infty$.
\end{proof}

\begin{prop}
$B_S^\infty$ is closed under the holomorphic functional calculus.  That is, given $b\in B_S^\infty$ and a function $f$ that is holomorphic on an open domain containing the spectrum of $b$, we have $f(b)\in B_S^\infty$.
\end{prop}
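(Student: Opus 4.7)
The plan is to define $f(b)$ via the Cauchy integral formula and verify that the resulting operator lies in $B_S^\infty$ by showing that the integrand is continuous as a map from the contour into the Fr\'echet space $B_{S,l}^\infty$ for an appropriate divisor $l$. The first step is to identify spectra: since $B_S^\infty$ is a $*$-subalgebra of $B_S$ containing the unit and, by Lemma \ref{b_inverse}, is inverse-closed in $B_S$, the spectrum of $b$ computed in $B_S^\infty$ agrees with the spectrum $\sigma(b)$ in $B_S$. I would then choose a contour $\Gamma$ contained in the holomorphy domain of $f$ and enclosing $\sigma(b)$, and define
\begin{equation*}
f(b) = \frac{1}{2\pi i}\oint_\Gamma f(\lambda)(\lambda - b)^{-1}\,d\lambda,
\end{equation*}
which a priori converges in $B_S$ to the usual element $f(b)$.

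Next I would fix a divisor $l$ of $S$ with $U^l b U^{-l} = b$. Applying $U^l\cdot U^{-l}$ to $(\lambda - b)(\lambda - b)^{-1} = I$ and invoking uniqueness of inverses shows that $U^l(\lambda - b)^{-1}U^{-l} = (\lambda - b)^{-1}$, so each resolvent actually lies in the closed subspace $B_{S,l}^\infty \subset B_S^\infty$. Thus the integrand takes values in the single Fr\'echet space $B_{S,l}^\infty$, whose topology is generated by the seminorms $\|\cdot\|_M$. To conclude, it suffices to verify that $\lambda \mapsto f(\lambda)(\lambda - b)^{-1}$ is continuous $\Gamma \to B_{S,l}^\infty$ and to form the corresponding Fr\'echet-valued contour integral; the result will then belong to $B_{S,l}^\infty$ and must coincide with $f(b)$ by continuity of the inclusion $B_{S,l}^\infty \hookrightarrow B_S$.

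The main obstacle, and the heart of the argument, is to produce uniform bounds on $\|(\lambda - b)^{-1}\|_M$ for $\lambda$ ranging over the compact contour $\Gamma$. For this I would extract from the proof of Lemma \ref{b_inverse} the inductive estimate
\begin{equation*}
\|c^{-1}\|_{M+1} \le \|c^{-1}\|_M^2\,\|c\|_{M+1}
\end{equation*}
valid for any invertible $c \in B_S^\infty$, and apply it with $c = \lambda - b$. The operator norm $\|(\lambda - b)^{-1}\|$ is bounded on $\Gamma$ by continuity of the $B_S$-resolvent, while $\|\lambda - b\|_{M} \le |\lambda| + \|b\|_M$ is trivially bounded on the compact set $\Gamma$; an induction on $M$ then produces a uniform bound on each $\|(\lambda - b)^{-1}\|_M$. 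Continuity of $\lambda \mapsto (\lambda - b)^{-1}$ into $B_{S,l}^\infty$ follows by the same inductive scheme combined with the resolvent identity $(\lambda - b)^{-1} - (\mu - b)^{-1} = (\mu - \lambda)(\lambda - b)^{-1}(\mu - b)^{-1}$. With these estimates in hand, the Fr\'echet-valued Riemann sums approximating the contour integral converge in every $\|\cdot\|_M$, so $f(b) \in B_{S,l}^\infty \subset B_S^\infty$, as required.
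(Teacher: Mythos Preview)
Your proposal is correct and follows essentially the same approach as the paper: define $f(b)$ by the Cauchy integral, observe that the resolvent $(\lambda-b)^{-1}$ lies in the fixed Fr\'echet space $B_{S,l}^\infty$ by Lemma \ref{b_inverse}, and conclude via completeness of $B_{S,l}^\infty$. Your version is in fact more careful than the paper's, which simply asserts that completeness yields $f(b)\in B_{S,l}^\infty$; you supply the missing details by extracting the inductive estimate $\|c^{-1}\|_{M+1}\le\|c^{-1}\|_M^2\|c\|_{M+1}$ to get uniform $M$-norm bounds on the resolvent along $\Gamma$ and then verifying continuity of the integrand into $B_{S,l}^\infty$.
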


\begin{proof}
Let $b\in B_S^\infty$ and let $f$ be a holomorphic function on some open set containing $\sigma(b)$ and $C$ a contour aroud the spectrum.    Write $f$ in Cauchy integral form:
\begin{equation*}
f(z) = \frac{1}{2\pi i}\int_C \frac{f(\zeta)}{\zeta - z}\,d\zeta
\end{equation*}
which defines $f(b)$ as
\begin{equation*}
f(b) = \frac{1}{2\pi i}\int_C f(\zeta)(\zeta- b)^{-1}\,d\zeta\,.
\end{equation*}
Since $b\in B_S^\infty$, there exists a divisor $l$ of $S$ such that $b=U^lbU^{-l}$.  It follows from the Cauchy integral form above for that $f(b) = U^lf(b)U^{-l}$.  

From Lemma \ref{b_inverse}, it follows that if $b\in B_{S,\,l}^\infty$ for some divisor $l$ of $S$ and $b$ is invertible in $B_S$, then $b^{-1}\in B_{S,\,l}^\infty$. Consequently,  $(\zeta- b)^{-1}\in B_{S,\,l}^\infty$ for $\zeta\notin \sigma(b)$. It follows from completeness of $B_{S,\,l}^\infty$ that $f(b)\in B_{S,\,l}^\infty\subseteq B_S^\infty$.  This completes the proof.
\end{proof}

\section{Classification of Derivations}
The main goal in this section is to classify continuous derivations $\delta: B_S^\infty\to B_S^\infty$. The results below are based on ideas from \cite{KMRSW2}, which described a classification of unbounded derivations $\delta:\mathcal{B}_S\to B_S$. Here $\mathcal{B}_S$ is the polynomial dense subalgebra of $B_S$. The reference \cite{KM6} contains a generalization of this classification to $C^*$-algebras associated to monothetic groups.  

\subsection{Preliminaries}
We begin with recalling the basic concepts.  Let $A$ be a Banach algebra and $\mathcal{A}$ be a dense subalgebra of $A$. We say a derivation $\delta:\mathcal{A}\to A$ is {\it inner} if there is a $x\in A$ such that
\begin{equation*}
\delta(a) = [x,a]\,
\end{equation*}
for $a\in\mathcal{A}$.  We say a derivation $\delta:\mathcal{A}\to A$ is {\it approximately inner} if there are $x_n\in A$  such that 
\begin{equation*}
\delta(a) = \lim_{n\to\infty}[x_n,a]
\end{equation*} 
for $a\in\mathcal{A}$.  

The main result of \cite{KMRSW2} is:
\begin{theo}
Suppose $\delta:\mathcal{B}_S\to B_S$ is a derivation in $B_S$ with $S$ infinite. Then there exists a unique constant $C$ such that 
\begin{equation*}
\delta= C\delta_{\mathbb L} + \tilde\delta
\end{equation*}
where $\tilde\delta$ is approximately inner.
\end{theo}


Compared to \cite{KMRSW2}, here we are studying derivations on a larger domain but with smaller range and we need an added assumption of continuity of $\delta$.  It turns out that for continuous derivations $\delta: B_S^\infty\to B_S^\infty$ there is a similar decomposition as above; the key difference is that $\tilde{\delta}$ is inner, not just approximately inner.

Given $n\in\Z$, a continuous derivations $\delta: B_S^\infty\to B_S^\infty$ is said to be a {\it $n$-covariant derivation} if the relation 
\begin{equation*}
\rho_\theta^{-1}\delta\rho_\theta(a)= e^{-2\pi in\theta} \delta(a)
\end{equation*}
holds.  When $n=0$ we say the derivation is invariant.  With this definition, we point out that $\delta_{\mathbb{L}}:B_S^\infty\to B_S^\infty$ is an invariant continuous derivation.

\begin{defin}\label{Fou_comp1}
If $\delta$ is a continuous derivation on $B_S$, the {\it $n$-th Fourier component} of $\delta$ is defined as: 
\begin{equation*}
\delta_n(b)= \int_0^1 e^{2\pi in\theta} \rho_\theta^{-1}\delta\rho_\theta(b)\, d\theta\,.
\end{equation*}
\end{defin}

Notice that the integral in the above definition makes sense because of the continuity of $\delta$ and the continuity of the map $\theta\to\rho_\theta$. 

\subsection{Covariant Derivations}
The key step in classifying derivations is to analyze the properties of their Fourier components. This will be done in steps leading to the formulation of the main result. We will need below the standard observation that if $\delta_1$ and $\delta_2$ are continuous derivations then their commutator $[\delta_1,\delta_2]$ is also a continuous derivation.

\begin{lem}\label{est_delta_n}
If $\delta:B_S^\infty\to B_S^\infty$ is a continuous derivation then its \it $n$-th Fourier component $\delta_n$ is also a continuous derivation on $B_S^\infty$.  Moreover, for every $M\ge0$ and every $k\ge0$, there exists $M'\ge0$ and a constant $C_k=C_k(M)$ such that, for every $b\in B_S^\infty$:
\begin{equation*}
n^k\|\delta_n(b)\|_M \le C_k\|b\|_{M'}\,.
\end{equation*}
\end{lem}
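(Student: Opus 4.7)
The plan is first to establish that $\delta_n$ is a continuous derivation (the $k=0$ case of the inequality), and then to upgrade to the full $n^k$ decay via a Fourier-style integration by parts in $\theta$. Two observations drive both steps. First, $\rho_\theta = \mathrm{Ad}(e^{2\pi i\theta\mathbb L})$ commutes with $\delta_{\mathbb L}$, because $e^{2\pi i\theta\mathbb L}$ commutes with $\mathbb L$, and $\rho_\theta$ preserves the operator norm because it is implemented by unitary conjugation; together these imply $\|\rho_\theta(b)\|_M = \|b\|_M$ for all $M$ and $\theta$. Second, $\rho_\theta$ preserves each subspace $B_{S,\,l}^\infty$ because it commutes with conjugation by $U^l$.

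For the derivation property, each $\rho_\theta^{-1}\delta\rho_\theta$ is a derivation (as $\rho_\theta$ is an algebra automorphism), and the Leibniz rule is preserved under integration against a scalar weight. For continuity, pick $b\in B_{S,\,l_n}^\infty$. By Lemma \ref{reed_simon_1} applied to $\delta$, there is an $l_m$ with $\delta(B_{S,\,l_n}^\infty)\subseteq B_{S,\,l_m}^\infty$, and by Lemma \ref{reed_simon_2}, for each $M$ there exist $M'$ and $C$ with $\|\delta(c)\|_M \le C\|c\|_{M'}$ for all $c\in B_{S,\,l_n}^\infty$. Since $B_{S,\,l_m}^\infty$ is closed and $\rho_\theta$ keeps the integrand inside it, $\delta_n(b)\in B_{S,\,l_m}^\infty$; the isometry property then gives
\[
\|\delta_n(b)\|_M \le \int_0^1 \|\rho_\theta^{-1}\delta\rho_\theta(b)\|_M\,d\theta = \int_0^1 \|\delta\rho_\theta(b)\|_M\,d\theta \le C\int_0^1 \|\rho_\theta(b)\|_{M'}\,d\theta = C\|b\|_{M'},
\]
which simultaneously shows the continuity of $\delta_n$ and the $k=0$ case of the stated inequality.

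For general $k$, I integrate by parts in $\theta$. Using $\rho_\theta\delta_{\mathbb L} = \delta_{\mathbb L}\rho_\theta$, a direct calculation gives
\[
\frac{d}{d\theta}\bigl(\rho_\theta^{-1}\delta\rho_\theta(b)\bigr) = -2\pi i\,\rho_\theta^{-1}[\delta_{\mathbb L},\delta]\rho_\theta(b).
\]
The boundary term in the integration by parts vanishes by the $1$-periodicity of $e^{2\pi in\theta}$ and of $\rho_\theta$, yielding $n\delta_n(b) = [\delta_{\mathbb L},\delta]_n(b)$. The commutator of two continuous derivations is again a continuous derivation, so iterating produces $n^k\delta_n(b) = D^{(k)}_n(b)$, where $D^{(k)} := \mathrm{ad}(\delta_{\mathbb L})^k(\delta)$ denotes the $k$-fold iterated commutator. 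Applying the $k=0$ estimate from the previous paragraph to $D^{(k)}$ in place of $\delta$ produces $M'$ and $C_k$ with $n^k\|\delta_n(b)\|_M \le C_k\|b\|_{M'}$, as required. The main subtlety to handle carefully is verifying that the $B_{S,\,l_n}^\infty$-valued map $\theta\mapsto\rho_\theta(b)$ is $C^1$ with derivative $2\pi i\rho_\theta\delta_{\mathbb L}(b)$ in the Fréchet topology, so that both the integration by parts and the interchange of $\delta$ with $d/d\theta$ are rigorously justified; this follows from the Fourier decomposition $b = \sum_n U^n M_{f_n}$ with RD coefficients, which converges absolutely in each $\|\cdot\|_M$ norm.
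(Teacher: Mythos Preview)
Your proof is correct and follows essentially the same route as the paper: both use the $\rho_\theta$-invariance of the $M$-norms to obtain the $k=0$ estimate, then derive the identity $n\,\delta_n = \pm([\delta_{\mathbb L},\delta])_n$ via integration by parts in $\theta$ and iterate using that $[\delta_{\mathbb L},\delta]$ is again a continuous derivation. You supply somewhat more detail than the paper on why $\delta_n$ lands in the appropriate $B_{S,\,l_m}^\infty$ and on the $C^1$ regularity of $\theta\mapsto\rho_\theta(b)$ needed to justify the integration by parts, but the underlying argument is the same.
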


\begin{proof}
Since $\delta$ is a continuous derivation on $B_S^\infty$, there exists a constant $C=C(M)$ and an $M'\ge0$ such that
\begin{equation*}
\|\delta(b)\|_M \le C\|b\|_{M'}\,.
\end{equation*}
Using this and the fact that M-norms are $\rho_\theta$ invariant, we have
\begin{equation*}
\|\delta_n(b)\|_M = \left\|\int_0^1e^{2\pi in\theta}\rho_\theta^{-1}\delta\rho_\theta(b)\,d\theta\right\|_M\le \int_0^1\|\rho_\theta^{-1}\delta\rho_\theta(b)\|_M\,d\theta \le C\|b\|_{M'}\,.
\end{equation*}
Using integration by parts, the continuity of $[\delta_{\mathbb{L}},\delta]$, and the continuous differentiability of $\theta\mapsto \rho_\theta^{-1}\delta\rho_\theta$ on $B_S^\infty$, we have the following calculation:
\begin{equation*}
\begin{aligned}
2\pi in\delta_n(b) &= \int_0^1 2\pi ine^{2\pi in\theta}\rho_\theta^{-1}\delta\rho_\theta(b)\,d\theta = \int_0^1\frac{d}{d\theta}\left(e^{2\pi in\theta}\right)\rho_\theta^{-1}\delta\rho_\theta(b)\,d\theta \\
&=-\int_0^1e^{2\pi in\theta}\frac{d}{d\theta}\left(\rho_\theta^{-1}\delta\rho_\theta(b)\right)\,d\theta=-2\pi i\int_0^1e^{2\pi in\theta}\rho_\theta^{-1}[\delta_{\mathbb{L}},\delta]\rho_\theta(b)\,d\theta \\
&=-2\pi i\left([\delta_{\mathbb{L}},\delta]\right)_n(b)\,,
\end{aligned}
\end{equation*}
where $\left([\delta_{\mathbb{L}},\delta]\right)_n$ is the $n$-th Fourier component of $[\delta_{\mathbb{L}},\delta]$. Using induction on the order of derivatives in terms of $\theta$, the result follows.
\end{proof}

In general, the Fourier series for a derivation constructed as above does not converge to the derivation in norm. We only have the usual Ces\`aro mean convergence result for Fourier components of $\delta$: if $\delta$ is a derivation on $B_S^\infty$ then
\begin{equation}\label{Ces_eq}
\delta(b)=\lim_{M\rightarrow\infty} \frac{1}{M+1} \sum_{j=0}^M \left(\sum_{n=-j}^j \delta_n(b)\right)\,,
\end{equation}
for every $b\in B_S^\infty$, see Lemma 4.2 in \cite{KMRSW2} for more details. In particular, the Fourier components $\delta_n$ completely determine the derivation $\delta$. However, for continuous derivations on $B_S^\infty$, the above lemma implies that Fourier components $\delta_n$ are RD. Hence we have the following statement.

\begin{prop}\label{Fourier_series_norm_conv}
Let $\delta: B_S^\infty\to B_S^\infty$ be a continuous derivation, then
\begin{equation*}
\delta(b) = \sum_{n\in\Z} \delta_n(b)
\end{equation*}
for every $b\in B_S^\infty$.  Here the sum is norm convergent.
\end{prop}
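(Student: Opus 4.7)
The plan is to combine the rapid-decay estimate provided by Lemma \ref{est_delta_n} with the Ces\`aro-type convergence identity \ref{Ces_eq} to upgrade from Ces\`aro convergence to absolute norm convergence of the full Fourier series.

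The first step is to apply Lemma \ref{est_delta_n} with $M=0$ and $k=2$: this yields an exponent $M'$ and a constant $C_2$ such that $n^2\|\delta_n(b)\| \leq C_2\|b\|_{M'}$ for every $n\neq 0$, while the $k=0$ case bounds $\|\delta_0(b)\|$ by $C_0\|b\|_{M''}$ for some $M''$. Consequently $\sum_{n\in\Z}\|\delta_n(b)\| < \infty$, and by the Weierstrass test the series $\sum_{n}\delta_n(b)$ converges absolutely in the $C^*$-norm to some element $c\in B_S$.

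The second step is to identify $c$ with $\delta(b)$. Write $S_j(b) = \sum_{n=-j}^{j}\delta_n(b)$; by the previous step, $S_j(b)\to c$ in norm as $j\to\infty$, and it is a classical fact that the Ces\`aro averages of a norm-convergent sequence converge to the same limit. Hence $\frac{1}{M+1}\sum_{j=0}^{M}S_j(b) \to c$ in norm. But Equation \ref{Ces_eq} identifies these very Ces\`aro averages as converging to $\delta(b)$, so $c = \delta(b)$, which gives the desired equality with norm convergence.

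I do not anticipate a serious obstacle: Lemma \ref{est_delta_n} has already done the analytic work by producing polynomial decay in $n$ uniform in the $M$-norms, and the remaining argument is just the Weierstrass test plus the standard passage from Ces\`aro summability to ordinary summability for a sequence that is already known to converge. The only minor point to watch is that one uses the same $b$ in both bounds (which is fine, since $M'$ and $C_2$ depend only on the fixed $b$), and that the norm used for identifying the Ces\`aro limit in \ref{Ces_eq} is at least as weak as the $C^*$-norm, which is automatic.
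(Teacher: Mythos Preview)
Your argument is correct and the analytic core---applying Lemma~\ref{est_delta_n} with $k=2$ to get summability of $\|\delta_n(b)\|$---is exactly what the paper does. The identification step differs slightly: the paper argues that $\sum_n\delta_n(b)$ defines a continuous derivation whose Fourier components agree with those of $\delta$, whence equality; you instead invoke the Ces\`aro identity~\eqref{Ces_eq} directly and use that ordinary convergence implies Ces\`aro convergence to the same limit. Your route is arguably more elementary, since it avoids checking that the sum is again a continuous derivation and appeals only to the already-stated Ces\`aro formula.

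One small point: you establish convergence only in the $C^*$-norm (the case $M=0$), whereas the paper's proof shows the tail vanishes in every $\|\cdot\|_M$. Your argument extends verbatim to arbitrary $M$ by applying Lemma~\ref{est_delta_n} with that $M$ and $k=2$, so this is not a gap, but it is worth stating explicitly if one reads ``norm convergent'' as convergence in the $LF$-topology of $B_S^\infty$ rather than merely in $B_S$.
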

\begin{proof}
We will use the following version of the estimate from Lemma \ref{est_delta_n}: for every $M\ge0$ and every $k\ge0$, there exists $M'\ge0$ and a constant $C_k=C_k(M)$ such that, for every $b\in B_S^\infty$ we have
\begin{equation*}
\left(|n|^k+1\right)\|\delta_n(b)\|_M\le C_k\|b\|_{M'}\,.
\end{equation*}
Thus, considering the tail end of the infinite series and using the above inequality with $k=2$, we have
\begin{equation*}
\left\|\sum_{|j|\ge n}\delta_j(b)\right\|_M\le \sum_{|j|\ge n}\|\delta_j(b)\|_M\le C_2\|b\|_{M'}\sum_{|j|\ge n}\frac{1}{|j|^2+1}\,,
\end{equation*}
which clearly goes to zero as $n$ goes to infinity.  Thus the series is norm convergent.  Since the series is norm convergent, the formula $ \sum_{n\in\Z}\delta_n(b)$ defines a continuous derivation on $B_S^\infty$ with the same Fourier coefficients as $\delta$ hence it must be equal to $\delta$.
\end{proof}  

\begin{prop}\label{delta_n_cov}
Let $\delta:B_S^\infty\to B_S^\infty$ be a continuous derivation.  Then $\delta_n:B_S^\infty\to B_S^\infty$ is a continuous $n$-covariant derivation.
\end{prop}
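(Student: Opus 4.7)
The plan is to verify the three properties of $\delta_n$ in turn: the Leibniz rule, continuity in the LF topology on $B_S^\infty$, and $n$-covariance. Each reduces to a calculation with the integral formula from Definition \ref{Fou_comp1}, with most of the analytic content already packaged in Lemma \ref{est_delta_n} and Lemma \ref{reed_simon_1}.

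For the Leibniz rule I would use that each $\rho_\theta$ is a $*$-automorphism of $B_S$. Fixing $a,b\in B_S^\infty$, the automorphism property of $\rho_\theta$ combined with the Leibniz rule for $\delta$ gives
\begin{equation*}
\rho_\theta^{-1}\delta\rho_\theta(ab) = \rho_\theta^{-1}\delta\rho_\theta(a)\cdot b + a\cdot\rho_\theta^{-1}\delta\rho_\theta(b).
\end{equation*}
Multiplying by $e^{2\pi in\theta}$, integrating in $\theta$, and commuting the fixed factors $a$ and $b$ past the integral then yields $\delta_n(ab) = \delta_n(a)b + a\delta_n(b)$. Linearity of $\delta_n$ follows from the same manipulation.

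For continuity I would apply the characterization supplied by the theorem combining Lemmas \ref{reed_simon_1} and \ref{reed_simon_2}. The required $M$-norm estimate $\|\delta_n(b)\|_M \le C\|b\|_{M'}$ is exactly the case $k=0$ of Lemma \ref{est_delta_n}. For the subspace containment $\delta_n(B_{S,\,l_n}^\infty)\subseteq B_{S,\,l_m}^\infty$, the key observation is that $\rho_\theta$ preserves each $B_{S,\,l}^\infty$: applying $\rho_\theta$ to the identity $U^lbU^{-l}=b$ and using $\rho_\theta(U^l)=e^{2\pi il\theta}U^l$, the phases cancel to give $U^l\rho_\theta(b)U^{-l}=\rho_\theta(b)$. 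Combined with the containment for $\delta$ furnished by Lemma \ref{reed_simon_1}, the integrand $\theta\mapsto e^{2\pi in\theta}\rho_\theta^{-1}\delta\rho_\theta(b)$ is a continuous curve inside the closed Fr\'echet subspace $B_{S,\,l_m}^\infty$, and its integral $\delta_n(b)$ lies there as well.

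For the $n$-covariance identity I would compute $\rho_\alpha^{-1}\delta_n\rho_\alpha(b)$ using the one-parameter group property $\rho_\theta\rho_\alpha=\rho_{\theta+\alpha}$. This converts the integral to $\int_0^1 e^{2\pi in\theta}\rho_{\theta+\alpha}^{-1}\delta\rho_{\theta+\alpha}(b)\,d\theta$; the substitution $\phi=\theta+\alpha$, together with $1$-periodicity of the integrand in $\phi$ to restore the domain $[0,1]$, and extraction of the scalar $e^{-2\pi in\alpha}$, arrives at precisely $e^{-2\pi in\alpha}\delta_n(b)$. No single step is the real bottleneck; the only mild care needed is the standard fact that Fr\'echet-valued integrals commute with multiplication by fixed elements and stay inside closed subspaces, which rests on the LF-continuity already supplied by Lemma \ref{est_delta_n}.
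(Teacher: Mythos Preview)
Your proposal is correct and follows essentially the same route as the paper: the $n$-covariance is established by the identical change-of-variables computation, and the range condition $\delta_n(b)\in B_{S,l_m}^\infty$ comes from the same observation that $\rho_\theta$ preserves each $B_{S,l}^\infty$. If anything, you are more careful than the paper, which defers the Leibniz rule and continuity to the terse statement of Lemma~\ref{est_delta_n} and argues the ULC condition in one line from $U^l\delta(b)U^{-l}=\delta(b)$; your use of Lemma~\ref{reed_simon_1} to get a uniform $l_m$ independent of $\theta$ makes that step cleaner.
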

\begin{proof}
We previously noticed that $\delta_n$ is a derivation and is well-defined on $B_S^\infty$.  Since $\delta$ is a continuous derivation and the automorphism $\rho_\theta$ is continuous, it follows that $\delta_n$ is also continuous. The following computation verifies that $\delta_n$ is $n$-covariant:
\begin{equation*}
\rho_\theta^{-1}\delta_n\rho_\theta(b) = \int_0^1 e^{2\pi in\varphi} \rho_\theta^{-1}\rho_\varphi^{-1}\delta\rho_\varphi\rho_\theta(b)\, d\varphi = \int_0^1 e^{2\pi in\varphi}\rho_{\theta + \varphi}^{-1}\delta\rho_{\theta + \varphi}(b)\, d\varphi\,.
\end{equation*}
Changing to new variable $\theta + \varphi$, and using the translation invariance of the measure, it now follows that $\rho_\theta^{-1}\delta_n\rho_\theta(b)= e^{-2\pi in\theta} \delta_n(b)$. 

Since $\delta:B_S^\infty\to B_S^\infty$, for any $b\in B_S^\infty$ there exists a divisor $l$ of $S$ such that
\begin{equation*}
\delta(b) = U^l\delta(b)U^{-l}. 
\end{equation*}
 It follows from the definition of $\delta_n$ that
\begin{equation*}
\delta_n(b) = U^l\delta_n(b)U^{-l}\,.
\end{equation*}
Hence $\delta_n(b)\in B_S^\infty$.
\end{proof}

It turns out that $n$-covariant derivations can be described explicitly. This was done in \cite{KMRSW2} for unbounded $n$-covariant derivations $\delta:\mathcal{B}_S\to B_S$.  We state those results since they are useful below.

\begin{prop}\label{covariant_der_B_S} \cite{KMRSW2}
Let $\delta:\mathcal{B}_S\to B_S$ be an $n$-covariant derivation where $n\neq0$.  There exists an $F\in C(\Z/S\Z)$ such that
\begin{equation*}
\delta=[U^nM_F,\cdot]\,,
\end{equation*}
so $\delta$ is an inner derivation.
\end{prop}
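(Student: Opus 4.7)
The strategy is to leverage the $n$-covariance to pin down the form of $\delta$ on the algebra generators, and then to use the Leibniz rule to produce the inner-implementing element $F$ explicitly from the action of $\delta$ on a single well-chosen character. First, since the automorphism $\rho_\theta$ acts by $\rho_\theta(U) = e^{2\pi i\theta}U$ and $\rho_\theta(M_f) = M_f$, the operator $U^k M_f$ sits in the $e^{2\pi i k\theta}$ weight space of $\rho_\theta$. The relation $\rho_\theta^{-1}\delta\rho_\theta = e^{-2\pi in\theta}\delta$ therefore forces $\delta$ to raise the $\rho_\theta$-weight by $n$, yielding the structural form $\delta(U) = U^{n+1}M_h$ for some $h\in C(\Z/S\Z)$ and $\delta(M_f) = U^n M_{A(f)}$ for a linear map $A:\mathcal{E}(\Z/S\Z)\to C(\Z/S\Z)$.

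Next, I would extract two algebraic identities from the Leibniz rule. Applied to $M_{fg} = M_f M_g$ and using the consequence $M_f U^n = U^n M_{f\circ\beta^{-n}}$ of the paper's commutation relation $U M_f U^{-1} = M_{f\circ\beta}$, this yields the twisted Leibniz identity
\begin{equation*}
A(fg) = (f\circ\beta^{-n})\,A(g) + A(f)\,g;
\end{equation*}
applied to $UM_f = M_{f\circ\beta}U$, it yields a compatibility relation between $h$ and $A$, namely $A(f) - A(f\circ\beta)\circ\beta^{-1} = (f\circ\beta^{-n}-f)\,h$. On a character $\chi$ of $\Z/S\Z$ one has $\chi\circ\beta^{-n}=\overline{\chi(q(n))}\chi$, so both identities specialize to scalar manipulations. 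Motivated by the direct computation $[U^n M_F, M_\chi] = (1-\overline{\chi(q(n))})\,U^n M_{F\chi}$, I would fix a character $\chi_0$ with $\chi_0(q(n))\ne 1$ --- one exists because $n\ne 0$ and the map $q:\Z\to\Z/S\Z$ is injective for infinite $S$, so $q(n)\ne 0$ is separated from $0$ by some character --- and define
\begin{equation*}
F := \frac{A(\chi_0)\,\overline{\chi_0}}{1-\overline{\chi_0(q(n))}}\in C(\Z/S\Z).
\end{equation*}

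Finally, a symmetrization argument applied to the twisted Leibniz identity, comparing $A(\chi\chi')$ computed in the two orders, shows simultaneously that the construction of $F$ is independent of the choice of $\chi_0$, that $A(\chi) = (1-\overline{\chi(q(n))})F\chi$ for every character (with both sides equal to zero in the exceptional case $\chi(q(n))=1$), and, after substitution into the second Leibniz identity, that $h = F\circ\beta^{-1}-F$. These are precisely the conditions for $\delta$ and $[U^n M_F,\cdot]$ to agree on the generators $U^{\pm 1}$ and $M_\chi$ of $\mathcal{B}_S$, and the two derivations then coincide on all of $\mathcal{B}_S$. The main obstacle is the handling of exceptional characters $\chi$ with $\chi(q(n))=1$, where the defining quotient for $F$ does not apply. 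The resolution is again symmetrization: pairing such a $\chi$ with the non-exceptional $\chi_0$ inside the twisted Leibniz identity, the antisymmetric part forces $(1-\overline{\chi_0(q(n))})\chi_0 A(\chi)=0$, hence $A(\chi) = 0$, which is consistent with $(1-\overline{\chi(q(n))})F\chi = 0$. This is where the hypothesis $n\ne 0$ is essentially used, as it guarantees the existence of at least one non-exceptional character $\chi_0$.
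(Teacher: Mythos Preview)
The paper does not actually prove this proposition; it is quoted from \cite{KMRSW2} and stated without proof. There is therefore no ``paper's own proof'' to compare against directly. That said, your argument is correct and self-contained: the $n$-covariance forces $\delta(M_f)=U^nM_{A(f)}$ and $\delta(U)=U^{n+1}M_h$; the twisted Leibniz identity $A(fg)=(f\circ\beta^{-n})A(g)+A(f)g$ follows; and the symmetrization $(1-\overline{\chi(q(n))})\chi A(\chi')=(1-\overline{\chi'(q(n))})\chi' A(\chi)$ both pins down $A(\chi)=(1-\overline{\chi(q(n))})F\chi$ for all characters (including the exceptional ones, where both sides vanish) and shows $F$ is independent of the choice of $\chi_0$. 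Substituting into the second identity gives $h=F\circ\beta^{-1}-F$, so $\delta$ and $[U^nM_F,\cdot]$ agree on generators. Your invocation of injectivity of $q$ for infinite $S$ to produce the non-exceptional $\chi_0$ is the right move.

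Your approach is in fact the same one the paper itself uses when it \emph{applies} this proposition in the proof of Proposition~\ref{cont_covariant_der_B_S_infty}: there the formula $M_F=\frac{1}{1-\chi(q(n))}U^{-n}\delta(M_\chi)M_\chi^{-1}$ is written down, which is exactly your definition of $F$ (up to a harmless complex conjugate in the scalar, reflecting a sign convention in moving $M_\chi$ past $U^n$). So your reconstruction is faithful to the intended argument.
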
 

\begin{prop}\label{invariant_der_B_S} \cite{KMRSW2}
Let $\delta:\mathcal{B}_S\to B_S$ be a $0$-covariant (invariant) derivation. There exists a unique constant $C$ such that 
\begin{equation*}
\delta= C\delta_{\mathbb L} + \tilde{\delta}\,,
\end{equation*} 
where $\tilde{\delta}$ is an approximately inner derivation.
\end{prop}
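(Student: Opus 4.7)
The plan is to exploit the $0$-covariance to pin down $\delta$ on the generators $U$ and $M_f$ of $\mathcal{B}_S$, extract a canonical constant from the value at $U$, and then identify the remainder as approximately inner using the unique ergodicity of the odometer. Applied to $U$, the identity $\rho_\theta^{-1}\delta\rho_\theta = \delta$ reads $\rho_\theta(\delta(U)) = e^{2\pi i\theta}\delta(U)$, which by uniqueness of Fourier components in $B_S$ (Lemma~\ref{Fourier_coeff}) forces $\delta(U)$ into the first spectral subspace $U\cdot C(\Z/S\Z)$; hence $\delta(U) = UM_h$ for a unique $h\in C(\Z/S\Z)$. Applied to $M_f$, the identity gives $\delta(M_f)\in C(\Z/S\Z)$, the fixed-point subalgebra. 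For $f\in\mathcal{E}(\Z/S\Z)$, $M_f$ is a finite linear combination of projections $M_{\mathbf 1_A}$ attached to clopen subsets $A\subseteq\Z/S\Z$, and the Leibniz identity $\delta(p) = 2p\,\delta(p)$ combined with $(1-2p)^2 = 1$ (which makes $1-2p$ invertible in the commutative algebra $C(\Z/S\Z)$) forces $\delta(M_{\mathbf 1_A}) = 0$. Hence $\delta$ vanishes on $\mathcal{E}(\Z/S\Z)$ and is completely determined on $\mathcal{B}_S$ by the single function $h$.

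Next I extract the constant. A direct computation using $UM_fU^{-1} = M_{f\circ\beta}$ gives $[M_g, U] = UM_{\Delta g}$ with $\Delta g := g\circ\beta^{-1} - g$. The decisive analytic input is that $\beta$ is a minimal translation on the compact abelian group $\Z/S\Z$, so $(\Z/S\Z,\beta)$ is uniquely ergodic with Haar measure as its only invariant probability measure. Via uniform convergence of Birkhoff averages, the uniform closure of $\Delta(C(\Z/S\Z))$ is exactly the kernel of the Haar integral $f\mapsto \int f\, d_H x$. Setting $C := \int_{\Z/S\Z} h\, d_H x$, I obtain a sequence $g_n\in C(\Z/S\Z)$ with $\Delta g_n \to h - C$ uniformly.

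Now set $\tilde\delta := \delta - C\delta_{\mathbb L}$. Using $\delta_{\mathbb L}(U) = U$ and $\delta_{\mathbb L}(M_f) = 0$, the first paragraph yields $\tilde\delta(U) = UM_{h-C}$ and $\tilde\delta(M_f) = 0$ for $f\in\mathcal{E}(\Z/S\Z)$, while $[M_{g_n},U] = UM_{\Delta g_n}\to UM_{h-C}$ in norm and $[M_{g_n},M_f] = 0$. Two derivations on $\mathcal{B}_S$ that agree on the generators $\{U, U^{-1}, M_\chi\}$ must agree everywhere, and this property is preserved by pointwise norm limits on each finite polynomial; thus $\tilde\delta(b) = \lim_n[M_{g_n},b]$ for every $b\in\mathcal{B}_S$, i.e., $\tilde\delta$ is approximately inner. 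For the uniqueness of $C$, suppose $C\delta_{\mathbb L}+\tilde\delta = C'\delta_{\mathbb L}+\tilde\delta'$ with both remainders approximately inner; then $(C-C')\delta_{\mathbb L}$ is approximately inner, so $[x_n,U]\to (C-C')U$ in norm for some $x_n\in B_S$. The canonical tracial state $\tau(b) := \int_{\Z/S\Z} E(b)\,d_H x$ on $B_S$ kills every commutator, so $\tau(U^{-1}[x_n,U]) = 0$ for each $n$; continuity of $\tau$ then forces $C - C' = \tau(U^{-1}\cdot(C-C')U) = 0$.

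The principal obstacle is the ergodic-theoretic identification of the uniform closure of $\Delta(C(\Z/S\Z))$ with the kernel of the Haar integral; this is precisely where the dynamical structure of the odometer (minimality producing unique ergodicity on a compact abelian group) is indispensable. Once that cohomological fact is available, the remaining assembly via the Leibniz rule and the trace property is routine bookkeeping.
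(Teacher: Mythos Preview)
The paper does not supply its own proof of this proposition; it is quoted from \cite{KMRSW2}, and the surrounding text (in particular the proof of Proposition~\ref{invariant_der_B_S_infty}) only sketches the underlying ideas: invariance forces $\delta(U)=UM_F$ and $\delta(M_f)=0$ on $\mathcal{E}(\Z/S\Z)$, and then one sets $C=\int_{\Z/S\Z}F$. Your argument reconstructs precisely this skeleton and fills in the two points the paper leaves implicit, namely (i) why $\tilde F:=F-C$ produces an \emph{approximately} inner remainder, via the unique ergodicity of the odometer and the Hahn--Banach identification of $\overline{\Delta(C(\Z/S\Z))}$ with $\ker\int d_Hx$, and (ii) why $C$ is unique, via the canonical trace. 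Both steps are correct, and the overall line is the same as the cited source.

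One small remark: your projection argument for $\delta(M_f)=0$ is a clean alternative to the paper's one-line appeal to ``there are no nonzero derivations on $\mathcal{E}(\Z/S\Z)$''; the two are of course equivalent, since any locally constant function is a finite $\Z$-linear combination of clopen indicators.
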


Using Proposition \ref{covariant_der_B_S}, for $n\neq0$, we classify all continuous $n$-covariant derivations on $B_S^\infty$.  

\begin{prop}\label{cont_covariant_der_B_S_infty}
Let $\delta:B_S^\infty\to B_S^\infty$ be a continuous $n$-covariant derivation on $B_S^\infty$. There exists a $F\in \mathcal{E}(\Z/S\Z)$ such that
\begin{equation*}
\delta(b) = \left[U^nM_{F},b\right]
\end{equation*}
for $b\in B_S^\infty$, so that $\delta$ is an inner derivation for each $n\neq0$.
\end{prop}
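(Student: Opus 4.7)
The plan is to apply Proposition~\ref{covariant_der_B_S} to the restriction $\delta|_{\mathcal{B}_S}$, which produces a continuous function $F$ such that $\delta(b) = [U^n M_F, b]$ on $\mathcal{B}_S$, and then use the continuity assumption on $\delta$ to upgrade $F$ to an element of $\mathcal{E}(\Z/S\Z)$ and extend the identity to all of $B_S^\infty$ by density.

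First, I would observe that $\mathcal{B}_S \subseteq B_S^\infty$, since every element of $\mathcal{B}_S$ has only finitely many Fourier coefficients and each is a trigonometric polynomial in characters of $\Z/S\Z$, hence locally constant. The restriction $\delta|_{\mathcal{B}_S}$ remains $n$-covariant because $\rho_\theta$ preserves $\mathcal{B}_S$, and it maps into $B_S^\infty \subseteq B_S$. Proposition~\ref{covariant_der_B_S} then yields $F \in C(\Z/S\Z)$ with $\delta(b) = [U^n M_F, b]$ for all $b \in \mathcal{B}_S$.

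Next, I would show $F \in \mathcal{E}(\Z/S\Z)$ by testing $\delta$ on a single well-chosen character. Choose $l \in \mathcal{L}_S$ with $l \nmid n$ (possible since $n \neq 0$ and $S$ is infinite), and let $\chi = \chi_l$, so that $\chi(q(n)) = e^{2\pi i n/l} \neq 1$. Using the character identity $\chi \circ \beta^n = \chi(q(n))\chi$ together with $M_\chi U^n = U^n M_{\chi\circ\beta^n}$, a direct computation gives
\[
\delta(M_\chi) = [U^n M_F, M_\chi] = \bigl(1 - \chi(q(n))\bigr)\, U^n M_{F\chi}.
\]
Since $M_\chi \in B_S^\infty$ and $\delta$ maps $B_S^\infty$ to itself, while $1 - \chi(q(n)) \neq 0$, this forces $U^n M_{F\chi} \in B_S^\infty$, which by the ULC condition forces $F\chi \in \mathcal{E}(\Z/S\Z)$. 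Multiplying by the locally constant unitary $\bar\chi$ gives $F = \bar\chi\,(F\chi) \in \mathcal{E}(\Z/S\Z)$, as required.

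Finally, the identity $\delta(b) = [U^n M_F, b]$ extends from $\mathcal{B}_S$ to all of $B_S^\infty$ by continuity and density. Since $F$ is now locally constant, $U^n M_F \in B_{S,l_F}^\infty$ where $l_F$ is the period of $F$, and applying the Leibniz rule to $\delta_{\mathbb L}^{(j)}([U^n M_F, b])$ together with $\delta_{\mathbb L}^{(k)}(U^n M_F) = n^k U^n M_F$ yields estimates of the form $\|[U^n M_F, b]\|_M \le C_{n,F,M}\|b\|_M$, so $b \mapsto [U^n M_F, b]$ is continuous on $B_S^\infty$. Density of $\mathcal{B}_S$ in $B_S^\infty$ in the LF topology follows from truncating Fourier expansions: any $b = \sum_n U^n M_{f_n} \in B_{S,l}^\infty$ has each $f_n$ a finite linear combination of characters of period dividing $l$ (since the space of $l$-periodic locally constant functions is finite-dimensional), and the truncations $\sum_{|n|\le N} U^n M_{f_n}$ lie in $\mathcal{B}_S$ and converge to $b$ in every $\|\cdot\|_M$-norm by the RD property of $\{\|f_n\|\}$. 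Thus $\delta$ and $[U^n M_F, \cdot]$ are continuous derivations on $B_S^\infty$ that agree on a dense subset, so they agree everywhere. The main obstacle is the middle step: recognizing that evaluating $\delta$ on a single character $M_\chi$ with $\chi(q(n)) \neq 1$ is enough to transfer the $B_S^\infty$-continuity of $\delta$ into the local constancy of $F$; the surrounding arguments are essentially bookkeeping for density and continuity in the LF topology.
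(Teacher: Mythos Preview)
Your proposal is correct and follows essentially the same route as the paper's proof: restrict to $\mathcal{B}_S$, invoke Proposition~\ref{covariant_der_B_S} to obtain $F\in C(\Z/S\Z)$, evaluate $\delta$ on a single character $M_\chi$ with $\chi(q(n))\neq 1$ to force $F\in\mathcal{E}(\Z/S\Z)$, and then extend by continuity. The paper writes the conclusion of the character computation as $M_F=\frac{1}{1-\chi(q(n))}U^{-n}\delta(M_\chi)M_\chi^{-1}\in B_S^\infty$ rather than your $F=\bar\chi(F\chi)$, and it dispatches the extension step with a single phrase, whereas you supply the explicit $M$-norm estimates and the density-by-truncation argument; these are elaborations rather than differences in strategy.
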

\begin{proof}
Notice that a derivation $B_S^\infty\to B_S^\infty$ defines (an unbounded) derivation $\mathcal{B}_S\to B_S$, so we can use Proposition \ref{covariant_der_B_S}. In particular, there exists $F\in C(\Z/S\Z)$ such that $\delta(b) = \left[U^nM_{F},b\right]$, at least for $b\in \mathcal{B}_S$. We want to show that in fact  $F\in\mathcal{E}(\Z/S\Z)$. Then, by continuity, that formula will work for any $b\in B_S^\infty$.

Let $\chi$ be a character on $\Z/S\Z$. Then, since $M_\chi\in \mathcal{B}_S\subset B_S^\infty$, we have that $\delta(M_\chi)\in B_S^\infty$ is well-defined.  Using the fact that $\beta^n(\chi)(x)= \chi(x)\chi(q(n))$ for $x\in\Z/S\Z$, we have
\begin{equation*}
\delta(M_\chi) = \left[U^nM_{F},M\chi\right] = U^n\left(M_{F\chi} - M_{F\beta^n(\chi)}\right) = (1-\chi(q(n)))U^nM_{F}M_\chi .
\end{equation*}
However, since $\chi$ is a character on $\Z/S\Z$, we have $\chi\in\mathcal{E}(\Z/S\Z)$ and for each $n$ we can choose $\chi$ such that $\chi(q(n))\neq 1$.  Therefore we obtain:
\begin{equation*}
M_{F} = \frac{1}{1-\chi(q(n))}U^{-n}\delta(M_\chi)M_\chi^{-1}\in B_S^\infty\,.
\end{equation*}
Thus indeed $F\in\mathcal{E}(\Z/S\Z)$, completing the proof.
\end{proof}

In particular, the above proposition can be applied to the $n$-th Fourier components of a continuous derivation $\delta:B_S^\infty \to B_S^\infty$. 

Next, utilizing Proposition \ref{invariant_der_B_S} we classify all $0$-covariant (invariant) derivations on $B_S^\infty$. 
\begin{prop}\label{invariant_der_B_S_infty}
Let $\delta:B_S^\infty\to B_S^\infty$ be an invariant continuous derivation. Then there exists a unique constant $C$ such that 
\begin{equation*}
\delta= C\delta_{\mathbb L} + \tilde{\delta}\,,
\end{equation*} 
where $\tilde{\delta}$ is an inner derivation of the form:
\begin{equation*}
\tilde{\delta}(a)= [M_G,a]
\end{equation*}
for some $G\in\mathcal{E}(\Z/S\Z)$.
\end{prop}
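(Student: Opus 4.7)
The plan is to start from Proposition \ref{invariant_der_B_S}, which already provides a decomposition $\delta = C\delta_{\mathbb L} + \tilde\delta$ on the polynomial subalgebra $\mathcal{B}_S$ with $\tilde\delta$ approximately inner and with $C$ unique. My task is then to upgrade ``approximately inner'' to ``inner implemented by $M_G$ with $G \in \mathcal{E}(\Z/S\Z)$'' by exploiting the continuity of $\delta$ on $B_S^\infty$. Since $\delta_{\mathbb L}$ is a continuous derivation on $B_S^\infty$, so is $\tilde\delta = \delta - C\delta_{\mathbb L}$, and it inherits invariance from $\delta$; the uniqueness of $C$ is inherited from Proposition \ref{invariant_der_B_S}.

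First I would pin down $\tilde\delta(U)$. Invariance combined with $\rho_\theta(U) = e^{2\pi i\theta}U$ forces $\rho_\theta(\tilde\delta(U)) = e^{2\pi i\theta}\tilde\delta(U)$, so $\tilde\delta(U) = UM_h$ for some $h \in C(\Z/S\Z)$. Continuity of $\tilde\delta$ and $\tilde\delta(U) \in B_S^\infty$ yield some $l | S$ with $U^l \tilde\delta(U) U^{-l} = \tilde\delta(U)$; unwinding the commutation relation (\ref{comm_rel}) gives $h \circ \beta^l = h$, i.e., $h$ is $l$-periodic, so $h \in \mathcal{E}(\Z/S\Z)$. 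Similarly, for $f \in \mathcal{E}(\Z/S\Z)$, invariance places $\tilde\delta(M_f)$ in the fixed subalgebra $C^*\{M_g : g \in C(\Z/S\Z)\}$, which is commutative. A standard argument then applies: if $p = p^2$ is a projection in a commutative algebra and $\delta(p)$ lies in that algebra, then $\delta(p) = 2p\delta(p)$ and multiplying by $p$ yields $p\delta(p) = 0$, hence $\delta(p) = 0$. By linearity $\tilde\delta(M_f) = 0$ on all locally constant $f$.

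Next I would exploit approximate innerness: $\tilde\delta = \lim_n [x_n, \cdot\,]$ on $\mathcal{B}_S$. Invariance of $\tilde\delta$ allows replacement of $x_n$ by its conditional expectation $E(x_n) = \int_0^1 \rho_\theta(x_n)\,d\theta \in C^*\{M_g\}$, so without loss of generality $x_n = M_{G_n}$ with $G_n \in C(\Z/S\Z)$. Then $\tilde\delta(U) = \lim_n M_{G_n - G_n\circ\beta}\,U$, so $h$ is the uniform limit of $G_n - G_n\circ\beta$; $\beta$-invariance of Haar measure gives $\int (G_n - G_n\circ\beta)\,d_Hx = 0$ for each $n$, hence $\int h\,d_Hx = 0$. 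The coboundary equation $G - G\circ\beta = h$ is then solved by the telescoping formula $G(q(k)) := -\sum_{j=0}^{k-1}h(q(j))$; the zero-mean condition combined with $l$-periodicity of $h$ forces $G$ to be $l$-periodic, so $G \in \mathcal{E}(\Z/S\Z)$. A direct computation yields $[M_G, U] = M_{G - G\circ\beta}\,U = M_h U = \tilde\delta(U)$ and $[M_G, M_f] = 0 = \tilde\delta(M_f)$ for $f \in \mathcal{E}(\Z/S\Z)$. Hence the two continuous derivations $\tilde\delta$ and $[M_G, \cdot\,]$ agree on the generators of $\mathcal{B}_S$, so on all of $\mathcal{B}_S$ by the derivation property, and finally on $B_S^\infty$ by continuity and the norm convergence of Fourier series in the $M$-norms (Proposition \ref{B_infty_descript}, using Rapid Decay to control tails).

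I expect the main obstacle to be the invariance-preserving averaging step together with the zero-mean condition on $h$: Proposition \ref{invariant_der_B_S} only gives arbitrary approximants $x_n \in B_S$, and one must verify carefully that passage to $E(x_n)$ preserves the limit (it does, precisely because $\tilde\delta$ is $\rho_\theta$-invariant and $\rho_\theta$ is isometric). Once $h \in \mathcal{E}(\Z/S\Z)$ with zero Haar mean is in hand, the coboundary construction of $G$ is elementary, and the remainder is routine verification using continuity and density of $\mathcal{B}_S$ in $B_S^\infty$.
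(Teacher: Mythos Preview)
Your proposal is correct, and the overall skeleton---identify $\tilde\delta(U)=UM_h$ with $h$ locally constant, show $\tilde\delta(M_f)=0$ via the projection argument, solve the coboundary equation for $G$, then extend by density and continuity---matches the paper's. The main difference is how you obtain the zero-mean condition on $h$. The paper does not invoke Proposition~\ref{invariant_der_B_S} as a black box at all: instead it computes $\delta(U)=UM_F$ with $F\in\mathcal{E}(\Z/S\Z)$ directly (using that $\delta(U)\in B_S^\infty$), then \emph{defines} $C:=\int_{\Z/S\Z}F\,d_Hx$, so that $\tilde F:=F-C$ has zero mean by construction. You instead take $C$ from Proposition~\ref{invariant_der_B_S} and recover $\int h=0$ through the approximate-innerness conclusion and the averaging $x_n\mapsto E(x_n)$; this works (for $a=U$ the required uniformity in $\theta$ is trivial since $\rho_\theta(U)=e^{2\pi i\theta}U$), but it is a detour compared to the paper's one-line definition of $C$. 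A second, minor difference: the paper solves $G\circ\beta-G=\tilde F$ by writing $\tilde F$ as a finite combination of nontrivial characters $\chi$ and setting $G=\chi/(\chi(q(1))-1)$, whereas you use the telescoping sum $G(q(k))=-\sum_{j=0}^{k-1}h(q(j))$; both are standard and equivalent once zero mean and local constancy are in hand. One small bookkeeping slip: you write $\tilde\delta(U)=UM_h$ but later $[M_G,U]=M_hU$; these differ by a $\beta$-shift of $h$, which is harmless since $\mathcal{E}(\Z/S\Z)$ is $\beta$-invariant.
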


\begin{proof}
Let $\delta:B_S^\infty\to B_S^\infty$ be a continuous invariant derivation. Clearly $\delta$ defines a derivation on $\mathcal{B}_S$. Following the ideas of the proof of Proposition \ref{invariant_der_B_S}, from \cite{KMRSW2}, we have $\delta(U)=UM_F\in B_S^\infty$ for some $F\in\mathcal{E}(\Z/S\Z)$ and $\delta(M_f)=0$ for $f\in\mathcal{E}(\Z/S\Z)$ since there are no nonzero derivations on $\mathcal{E}(\Z/S\Z)$ by invariance.  We decompose $F$ in the following way
\begin{equation*}
F=C+\tilde{F}\,,
\end{equation*}
where $C=\int_{\Z/S\Z}F$ and $\int_{\Z/S\Z}\tilde{F}=0$. Thus we obtain 
\begin{equation*}
\delta= C[\mathbb{L},\cdot] + \tilde{\delta}\,,
\end{equation*}
with $\tilde{\delta}(U)=UM_{\tilde{F}}$.
To complete the proof, we must show $\tilde{\delta}$ is an inner derivation.  That is, we need to show there exists a $G\in\mathcal{E}(\Z/S\Z)$ such that
\begin{equation*}
\tilde{\delta}(a) = [M_G,a]
\end{equation*}
for any $a\in B_S^\infty$.  If that's the case, then
\begin{equation*}
UM_{\tilde{F}} = \tilde{\delta}(U) = [M_G,U] = U(M_{G\circ\beta}-G)
\end{equation*}
which is true if
\begin{equation*}
\tilde{F}(x) = G(x+q(1))-G(x)
\end{equation*}
for some $G$.  However, such a $G$ exists since $\tilde{F}\in\mathcal{E}(\Z/S\Z)$ and hence is a finite linear combination of nontrivial characters on $\Z/S\Z$, as $\int_{\Z/S\Z}\tilde{F}=0$. By continuity, $\chi(q(1))=1$ if and only if $\chi$ is trivial. If $\tilde{F}(x) = \chi(x)$ with $\chi(q(1))\neq1$, then we can choose
\begin{equation*}
G(x) = \frac{\chi(x)}{\chi(q(1))-1}
\end{equation*}
which is clearly in $\mathcal{E}(\Z/S\Z)$.  Thus in general, such a $G$ will also be a finite linear combination of characters and $G\in\mathcal{E}(\Z/S\Z)$.
\end{proof}

\subsection{Classification}
Finally, we classify all continuous derivations $\delta:B_S^\infty\to B_S^\infty$.
\begin{theo}
Let $\delta:B_S^\infty\to B_S^\infty$ be a continuous derivation.  Then there exists a unique constant $C$ such that
\begin{equation*}
\delta = C\delta_{\mathbb{L}} + \tilde\delta
\end{equation*}
where $\tilde\delta$ is inner. 
\end{theo}

\begin{proof}
Let $\delta_0$ be the $0$-th Fourier component of $\delta$. It is an invariant derivation, so by Proposition \ref{invariant_der_B_S} we have the unique decomposition:
\begin{equation*}
\delta_0(b)= C[\mathbb{L},b] + \tilde{\delta_0}(b)
\end{equation*}
for every $b\in B_S^\infty$, where $\tilde{\delta_0}$ is an inner derivation.
From Proposition \ref{cont_covariant_der_B_S_infty} we have that the Fourier components $\delta_n$, $n\neq 0$ are inner derivations.   It follows from Propositions \ref{cont_covariant_der_B_S_infty} and \ref{Fourier_series_norm_conv}, by extracting $\delta_0$, that we have:
\begin{equation*}
\delta(b)=\delta_0(b)+\lim_{n\to\infty}\sum_{|j|\leq n,\, j\neq0}\delta_j(b) = \delta_0(b)+\lim_{n\to\infty}\left[\sum_{|j|\leq n,\, j\neq0}U^jM_{F_j},b\right]
\end{equation*}
with $F_j\in\mathcal{E}(\Z/S\Z)$.  We now need to establish two things:  that $\{F_n\}$ is ULC and that the series $\sum_{j}U^jM_{F_j}$ is convergent in $B_S^\infty$.

Since $\delta:B_S^\infty\to B_S^\infty$ is continuous, from Lemma \ref{reed_simon_1}, we know it's a continuous map between the Fr\'echet spaces $B_{S,\, l}^\infty$ for some $l$.  Thus, given $b\in B_S^\infty$, we have $\delta(b)\in B_S^\infty$ and so there exists a divisor $l$ of $S$ such that $U^l\delta(b)U^{-l} = \delta(b)$.  But, since $\rho_\theta$ is a continuous automorphism, we have $U^l\delta_n(b)U^{-l} = \delta_n(b)$ for any $n$.  Using this observation for $b=U$, and the formula:
\begin{equation*}
\delta_n(U) = U^{n+1}M_{\beta(F_n)-F_n}\,,
\end{equation*}
we see that for some $l|S$ and we have:
\begin{equation*}
\beta^l(\beta(F_n)-F_n) = \beta(F_n)-F_n
\end{equation*}
for every $n$.  This means that
\begin{equation*}
F_n(x+(l+1)q(1)) - F_n(x+lq(1)) = F_n(x+q(1))-F_n(x),
\end{equation*}
and so the difference $F_n(x+lq(1)) - F_n(x)$ is $\beta$-invariant.
By minimality of the map $\beta$, there is a constant $c_n$ such that 
$$F_n(x+lq(1)) = F_n(x) + c_n.$$
Iterating this equation we get:
\begin{equation*}
F_n(x+klq(1)) = F_n(x) + kc_n\,
\end{equation*}
for $k=0,1,2,\ldots$. In the topology of $\Z/S\Z$ we have $klq(1)\to0$.  By taking the limit $k\to\infty$ in the above formula we see that $c_n=0$ for all $n$ and hence $F_n$ is ULC.  

Finally, to see that 
$$\lim_{n\to\infty} \sum_{|j|\leq n,\, j\neq0}U^jM_{F_j}\in B_S^\infty,$$ 
we check that $\{\|F_n\|\}$ are RD.  From the proof of Proposition \ref{cont_covariant_der_B_S_infty}, for $n\neq0$ we have
\begin{equation*}
M_{F_n} = \frac{1}{1-\chi(q(n))}U^{-n}\delta_n(M_\chi)M_\chi^{-1}\in B_S^\infty\,
\end{equation*}
for any character $\chi$ such that $\chi(q(n))\neq1$.  Therefore by Lemma \ref{est_delta_n}, 
\begin{equation*}
\|F_n\| = \|M_{F_n}\| = \|M_{F_n}\|_0\le \frac{1}{|1-\chi(q(n))|}\cdot\frac{C_k}{n^k}
\end{equation*}
for some constants $C_k$.  The point now is to choose a particular character to estimate the denominator in the above formula.

Let $g=\textrm{gcd}(n,S)$ and write $n=g\cdot n'$ with $n'$ and $S$ relatively prime.  Then set $\chi(q(n)) = e^{2\pi inj/l}$ for some divisor $l$ of $S$ to be chosen below, $0\le j<l$ and $j$ relatively prime to $l$.  Then $\chi(q(n))=1$ if and only if $l$ is a divisor of $nj=gn'j$. Take $l=gh$ so that $gh$ is a divisor of $S$.  Then $\chi(q(n)) = e^{2\pi in'j/h}$ and $n'$ is relatively prime to $h$.  Thus, there exist integers $p$ and $q$ so that $pn'+qh=1$.  Take $j=p\gamma$ for some integer $\gamma$ to be determined.  With those choices we obtain:
\begin{equation*}
\chi(q(n)) = e^{2\pi in'p\gamma/h} = e^{2\pi i\gamma/h}\,.
\end{equation*}
Choose $\gamma$ as follows:
\begin{equation*}
\gamma=\left\{
\begin{aligned}
&\frac{h}{2} &&\textrm{if }h\textrm{ is even.}\\
&\frac{h+1}{2} &&\textrm{if }h\textrm{ is odd.}
\end{aligned}\right.
\end{equation*}
Thus, $\chi(q(n))=-1$ for even $h$ and $\chi(q(n))\approx -1$ for odd large enough $h$.  Thus 
$$|1-\chi(q(n))|>3/2$$ for this choice of $\chi$ and therefore we get:
\begin{equation*}
\|F_n\| \le \frac{2C_k}{3n^k}.
\end{equation*}
Hence, $\{\|F_n\|\}$ are RD.
\end{proof}

\subsection{Counterexamples}
It is necessary that the derivation is continuous in order to conclude that $\tilde{\delta}$ is in fact inner.  Here we present a class of examples of derivations $\delta: B_S^\infty\to B_S^\infty$ are not inner modulo $C\delta_{\mathbb{L}}$, and consequently are not continuous.

We will study below examples of derivations on $B_S^\infty$ which are of the form:
\begin{equation*}
\delta_F(a) = [F(U),a]\,,
\end{equation*}
where the function $F(z)$ is not smooth, so that the derivations are not inner.
For $a\in B_S^\infty$ we will use the decomposition found in Equation \ref{CharDecomp}:
\begin{equation*}
a=\sum_{k=0}^{l-1}f_k(U)M_{\chi_l^k}\,.
\end{equation*}
Then
\begin{equation*}
\delta_F(a) = [F(U),a]=\sum_{k=0}^{l-1}f_k(U)\left[F(U)-F\left(e^{2\pi ik/l}U\right)\right]M_{\chi_l^k}\,.
\end{equation*}
Notice that 0-th Fourier component of derivations $\delta_F$ is always zero. To show that $\delta_F$ is not inner, we construct an $F(z)\notin C^\infty(S^1)$ such that for any divisor $l$ of $S$ and $0\le k\le l-1$, $F(z)-F(e^{2\pi ik/l}z)\in C^\infty(S^1)$.  First consider the case when $S=p^\infty$ for some prime $p$ and write $F$ in its Fourier decomposition:
\begin{equation*}
F(z) = \sum_{n\in\Z}a_kz^k
\end{equation*}
for some coefficients $a_k$ to be determined.  Notice that
\begin{equation*}
F(z)-F\left(e^{2\pi ik/l}z\right) = \sum_{n\in\Z}a_k\left(1-e^{\frac{2\pi ik}{p^n}}\right)z^k\,.
\end{equation*}
Define the coefficients $a_k$ as follows:
\begin{equation*}
a_k =\left\{
\begin{aligned}
&b_l && k=p^l\,,\,l\ge0 \\
&0 &&\textrm{else}\,.
\end{aligned}\right.
\end{equation*}
Then we have 
\begin{equation*}
F(z)-F\left(e^{2\pi ik/l}z\right) = \sum_{l=0}^\infty b_l\left(1-e^{\frac{2\pi ip^l}{p^n}}\right)z^{p^l} = \sum_{l=0}^{n-1} b_l\left(1-e^{\frac{2\pi ip^l}{p^n}}\right)z^{p^l}\,,
\end{equation*}
which is clearly in $C^\infty(S^1)$ for every $b_l$.  For example, if we chose $b_l=1$ then $F(z)\notin C^\infty(S^1)$ because its Fourier coefficients are not rapidly decaying.  To extend this to general supernatural numbers $S$, let $1<l_1|\,l_2|\cdots$ be a sequence of increasing divisors of $S$ that converge to $S$.  If $l$ is any divisor of $S$, then there exists a $n$ large enough, such that $l|\,l_n$.  Repeating the above steps we get
\begin{equation*}
F(z) = \sum_{n=0}^\infty a_kz^{l_n}\quad\textrm{and}\quad F(z)-F\left(e^{2\pi ik/l}z\right) = \sum_{n=1}^\infty a_n\left(1-e^{\frac{2\pi il_n}{l}}\right)z^{l_n}\,.
\end{equation*}
The last sum in the above equation will also be finite for the same reasons as the case $S=p^\infty$, and consequently such derivations are well-defined for any sequence $\{a_n\}$.

\section{$K$-Theory and $K$-Homology}
In this section we study the $K$-Theory and $K$-Homology of $B_S$. Though the $K$-Theory of Bunce-Deddens algebras is known, we include an explicit calculation of it for completeness. Using the known $K$-Theory, we invoke Rosenberg and Schochet's Universal Coefficient Theorem \cite{RS UCT}, as well as Pimsner and Voiculescu's  $6$-term exact sequence \cite{PV}, to compute the $K$-Homology. 

\subsection{K-Theory}
For the remainder of the paper, we assume that $S$ is an infinite supernatural number. For a $C^*$-algebra $A$, we denote by $[\cdot]_0$ and $[\cdot]_1$ the class of an element in $K_0(A)$ and $K_1(A)$, respectively. Define the group 
$$G_S : =\left \{ \frac{k}{l} \in \Q : k\in\Z, l | S\right\}.$$ 
The proof of the following proposition is based on lemmas which are stated and proved directly after. 

\begin{prop}
$K_0(B_S) = G_S$, and $K_1(B_S) = \Z$. 
\end{prop}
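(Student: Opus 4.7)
My plan is to use the Pimsner--Voiculescu six-term exact sequence applied to the crossed product description $B_S\cong C(\Z/S\Z)\rtimes_\beta\Z$ established in Section 2. Since $\Z/S\Z$ is a Cantor set (for $S$ infinite), it is totally disconnected, so $C(\Z/S\Z)$ has only connected components that are points and hence $K_0(C(\Z/S\Z))\cong C(\Z/S\Z,\Z)$ (the locally constant integer-valued functions) while $K_1(C(\Z/S\Z))=0$. The PV sequence therefore collapses to a four-term exact sequence
\begin{equation*}
0\to K_1(B_S)\to K_0(C(\Z/S\Z))\xrightarrow{\,1-\beta_*\,}K_0(C(\Z/S\Z))\to K_0(B_S)\to 0,
\end{equation*}
so the problem reduces to computing the kernel and cokernel of $1-\beta_*$ on $C(\Z/S\Z,\Z)$.

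For the kernel, I would exploit the minimality of $\beta$: a $\beta$-invariant element of $C(\Z/S\Z,\Z)$ is a continuous integer-valued $\beta$-invariant function, and by minimality of $\beta$ every such function is constant. This gives $\ker(1-\beta_*)\cong\Z$, and therefore $K_1(B_S)\cong\Z$.

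For the cokernel, I would use the inductive limit structure: $C(\Z/S\Z,\Z)=\varinjlim_{l\in\mathcal{L}_S}C(\Z/l\Z,\Z)\cong\varinjlim_{l}\Z^l$, where the action of $\beta$ on $C(\Z/l\Z,\Z)\cong\Z^l$ is the cyclic shift induced by the generator $q(1)\in\Z/S\Z$ projecting to $1\in\Z/l\Z$. For each fixed $l$, the cokernel of $1-(\text{cyclic shift})$ on $\Z^l$ is $\Z$, with isomorphism given by the sum of coordinates (equivalently, integration against counting measure). Under the connecting map $C(\Z/k\Z,\Z)\hookrightarrow C(\Z/l\Z,\Z)$ coming from pull-back along $\pi_{kl}$ for $k\mid l\mid S$, each value is duplicated $l/k$ times, so the induced map on cokernels $\Z\to\Z$ is multiplication by $l/k$. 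Taking the inductive limit over $\mathcal{L}_S$ with these multiplication-by-$(l/k)$ maps yields precisely $G_S=\{k/l:k\in\Z,\,l\mid S\}$, identifying a class at level $l$ (represented by $1\in\Z$) with the fraction $1/l\in G_S$. Thus $K_0(B_S)\cong G_S$.

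The main obstacle I expect is the cokernel identification with $G_S$. The kernel computation via minimality is clean, and the collapse of the PV sequence to a short exact piece is immediate from $K_1(C(\Z/S\Z))=0$; but pinning down the transition maps on cokernels at each finite stage (and checking that the naive ``sum of values'' isomorphism intertwines them with multiplication by $l/k$) requires a careful compatibility check, and presumably is what the supporting lemmas announced in the text are designed to carry out. Once those lemmas are in place, the identification $\varinjlim(\Z\xrightarrow{\times l/k}\Z)\cong G_S$ follows by sending the generator at level $l$ to $1/l$.
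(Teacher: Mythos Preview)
Your argument via the Pimsner--Voiculescu sequence is correct, but it is \emph{not} the route the paper takes for this proposition. The paper instead realizes $B_S$ as the inductive limit of the subalgebras $B_{S,i}=\{a:U^{l_i}aU^{-l_i}=a\}\cong C(S^1)\otimes M_{l_i}(\C)$, invokes stability to get $K_j(B_{S,i})\cong K_j(C(S^1))\cong\Z$ for $j=0,1$, and then uses continuity of $K$-theory. The supporting lemmas you anticipated are not about the cokernel of $1-\beta_*$ at all: Lemma~\ref{K_0_gen} identifies explicit rank-one projections $M_{\kappa_{l_i,j}}$ generating $K_0(B_{S,i})$; Lemma~\ref{K_0_induced} shows the inclusion $\iota_i$ induces multiplication by $l_{i+1}/l_i$ on $K_0$; and Lemma~\ref{K_1_calc} shows $\iota_i$ induces the identity on $K_1$ by tracking the class of $U$. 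The resulting direct limits $(\Z,\times l_{i+1}/l_i)\to G_S$ and $(\Z,\mathrm{id})\to\Z$ then match yours.

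What each approach buys: the paper's method exhibits concrete generators of $K_0(B_S)$ as classes of projections $M_{\kappa_{l,j}}$ and of $K_1(B_S)$ as $[U]_1$, which is useful for later pairings. Your PV approach is shorter and more conceptual---minimality of $\beta$ gives $K_1$ in one line, and the cokernel computation is a clean exercise in commuting colimits---and it dovetails naturally with Section~5.3, where the paper \emph{does} switch to the PV sequence (in its $K$-homology form) to compute $K^1(B_S)$. So your method would give a more uniform treatment of the two sections, at the cost of the explicit generators.
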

\begin{proof}
Let $\{l_i\}$ be a sequence of finite divisors of $S$ such that $l_i | l_{i+1}$, $l_i < l_{i+1}$, and $\lim_{i \to \infty} l_i = S$. For each $i$, consider 
\begin{equation*}
    B_S \supseteq B_{S,i} : = \left\{a \in B_S : U^{l_i}aU^{-l_i} = a\right\}.
\end{equation*}
It is easy to see that $B_{S,i}$ can be identified with the Bunce-Deddens algebra for finite number $l_i$. Therefore, from \cite{KMRSW2}, there is an isomorphism of $C^*$-algebras:
$$B_{S,i} \cong C(S^1) \otimes M_{l_i}(\mathbb{C}).$$
Since $l_i | l_{i+1}$, we have that $B_{S,i} \subseteq B_{S,i+1}$. Moreover, $\cup_{i=1}^\infty B_{S,i}$ is dense in $B_S$, and we can realize $B_S$ as an inductive limit of $C^*$-algberas
\begin{equation*}
    B_S = \overline{\cup_{i=1}^\infty B_{S,i}}. 
\end{equation*}
By the stability of $K_0$ and $K_1$ we have: 
$$K_0(B_{S,i}) \cong K_0(C(S^1)) = \mathbb{Z},$$
and 
$$K_1(B_{S,i}) \cong K_1(C(S^1)) = \mathbb{Z}.$$  
By the continuity of $K_0$ and $K_1$, it only remains to compute the corresponding direct limits. By Lemma \ref{K_0_gen}, for each $j$ we have that $K_0(B_{S,i})$ is generated by the class of the projection $M_{\kappa_{l_i,j}}$, where 
\begin{equation}\label{chi_def}
    \kappa_{l,j}(x) : =\left\{
\begin{aligned}
& 1 &&\textrm{if }l | (l-j)\\
& 0 &&\textrm{else}
\end{aligned}\right.
\end{equation}
 Let $\iota_i: B_{S,i} \hookrightarrow B_{S,i+1}$ denote the inclusion. By Lemma \ref{K_0_induced}, the induced map $K_0(\iota_i):K_0(B_{S,i}) \to K_0(B_{S,i+1})$ on generators is given by 
\begin{equation*}
    K_0(\iota_i)\left[M_{\kappa_{l_i,0}}\right]_0 = \frac{l_{i+1}}{l_i} \left[M_{\kappa_{l_{i+1},0}}\right]_0.
\end{equation*}
From this, we see $K_0(B_S)$ is isomorphic to the space of equivalence classes on the disjoint union of copies of $\Z$:
$$K_0(B_S)\cong \left(\bigsqcup_{i=1}^\infty \Z \right)/ \sim$$ 
where $x_i \sim x_j \iff \textrm{ there exist } r \geq t, k \geq s \textrm{ such that } \frac{l_r}{l_t} x_i =  \frac{l_k}{l_s} x_j$. 

The key trick to compute the inductive limit above is to identify $K_0(B_{S,i})$ with the subset of rational numbers $\{k/l_i : k \in \Z \} \cong \Z$. From the fact that 
$$\frac{k}{l_i} = \frac{k'}{l_{i+1}} \iff \frac{kl_{i+1}}{l_i} = k',$$
we see that the inductive limit takes the form 
$$G'_S = \left\{ \frac{k}{l_i} : i = 1,2,\dots \right\} \subseteq G_S.$$ 
To see the reverse inclusion $G_S \subseteq G_S'$, fix $\frac{k}{l} \in G_S$. Since $\lim_{i \to \infty} l_i = S$, there exists $i$ such that $l | l_i$. For such an $i$ we have: 
$$\frac{k}{l} = \frac{k (l_i / l)}{l_i} \in G_S'.$$  
Therefore, $K_0(B_S)$ is isomorphic to $G_S$.

To compute $K_1(B_S)$, the approach is similar. In this case, however, the inclusion map $\iota_i$ induces the identity map between $K_1(B_{S,i})$ and $K_1(B_{S,i+1})$. This is verified in Lemma \ref{K_1_calc}. Hence, the direct limit is simply $\mathbb{Z}$. This completes the proof. 
\end{proof}
 
We now proceed by proving the lemmas used in the above theorem.
\begin{lem}
\label{K_0_gen}
For every $j$, the class $[M_{\kappa_{l_i,j}}]_0$ generates $K_{0}(B_{S,i})$.  
\end{lem}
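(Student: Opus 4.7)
The plan is to show that under the standard identifications $K_0(B_{S,i}) \cong K_0(M_{l_i}(C(S^1))) \cong K_0(C(S^1)) \cong \Z$ the class $[M_{\kappa_{l_i,j}}]_0$ corresponds to the generator $1 \in \Z$. The strategy is to establish two facts: (a) all $l_i$ projections $M_{\kappa_{l_i,0}}, \ldots, M_{\kappa_{l_i, l_i-1}}$ are mutually Murray--von Neumann equivalent \emph{inside} $B_{S,i}$, and (b) they sum to the identity. Combined, these give $l_i \cdot [M_{\kappa_{l_i,j}}]_0 = [I]_0$ in $K_0(B_{S,i}) \cong \Z$; since $[I_{M_{l_i}(C(S^1))}]_0$ corresponds to $l_i$ under Morita equivalence and $K_0(B_{S,i})$ is torsion-free, this forces $[M_{\kappa_{l_i,j}}]_0 = 1$, which generates. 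In particular the class is the same for every $j$, which is the content of the lemma.

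For (a), the natural candidate partial isometry is $V_j := U^j M_{\kappa_{l_i,0}}$. The equality $V_j^* V_j = M_{\kappa_{l_i,0}}$ is immediate since $M_{\kappa_{l_i,0}}$ is a projection. A short computation using the commutation relation $U M_f U^{-1} = M_{f \circ \beta}$ shows $V_j V_j^* = U^j M_{\kappa_{l_i,0}} U^{-j} = M_{\kappa_{l_i,\pm j}}$, where the sign is immaterial because $\pm j$ ranges over all residues mod $l_i$ as $j$ does. To confirm $V_j \in B_{S,i}$, note that $\kappa_{l_i,0}$ is $\beta^{l_i}$-invariant since $\pi_{l_i}(q(l_i)) = 0$, so $U^{l_i} V_j U^{-l_i} = U^j M_{\kappa_{l_i,0} \circ \beta^{l_i}} = U^j M_{\kappa_{l_i,0}} = V_j$.

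Fact (b) is transparent once one interprets $\kappa_{l_i,j}$ correctly: these functions are the characteristic functions of the cosets of $\ker \pi_{l_i}$ in $\Z/S\Z$ and thus form a partition of unity, so $\sum_{j=0}^{l_i-1} M_{\kappa_{l_i,j}} = I$. Combining (a) and (b) yields $l_i [M_{\kappa_{l_i,j}}]_0 = [I]_0$, and the conclusion follows. The only input drawing on background beyond routine bookkeeping is the standard fact that Morita equivalence $M_{l_i}(C(S^1)) \sim C(S^1)$ sends $[I]_0$ to $l_i$ times the generator of $K_0(C(S^1))$; the remainder of the argument is essentially a check of the commutation relations between $U$ and the multiplication operators, so I do not expect any genuine obstacle.
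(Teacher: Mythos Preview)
Your argument is correct. Both approaches ultimately rest on the stability isomorphism $K_0(C(S^1))\cong K_0(M_{l_i}(C(S^1)))$, but you use it differently from the paper. The paper invokes the explicit isomorphism $B_{S,i}\cong C(S^1)\otimes M_{l_i}(\C)$ (cited from \cite{KMRSW2}) and observes that $M_{\kappa_{l_i,j}}$ corresponds to $1\otimes E_{j,j}$; since the embedding $f\mapsto f\otimes E_{j,j}$ induces the $K_0$-isomorphism, $[1\otimes E_{j,j}]_0$ is visibly a generator. You instead work intrinsically in $B_{S,i}$: the partial isometries $V_j=U^jM_{\kappa_{l_i,0}}$ give mutual Murray--von Neumann equivalence of the $M_{\kappa_{l_i,j}}$, and the partition of unity yields $l_i[M_{\kappa_{l_i,j}}]_0=[I]_0$; you then appeal to the stability isomorphism only to identify $[I]_0$ with $l_i$ and conclude via torsion-freeness. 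The paper's route is shorter because it never has to produce or verify partial isometries, but it requires tracking a specific element through the explicit isomorphism; your route trades that for a more hands-on computation inside $B_{S,i}$ and only needs the coarser fact that $[I]_0\mapsto l_i$, which is arguably more robust if one did not have the isomorphism written down explicitly.
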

\begin{proof}
Notice that, under the isomorphism $B_{S,i} \cong C(S^1) \otimes M_{l_i}(\mathbb{C})$ mentioned above, $M_{\kappa_{l_i,j}}$ corresponds to an element of the form $ 1 \otimes E_{j,j}$, where $E_{j,j}$ is a rank $1$ projection. Recall the well known facts that $[1]_0$ generates $K_0(C(S^1))$, and that the map $ f \mapsto f \otimes E_{j,j} $ induces an isomorphism between $K_0(C(S^1))$ and $K_0(C(S^1) \otimes M_{l_i}(\C))$ (see, for instance, \cite{RLL} Exercise $11.2$ and Proposition 4.3.8). From this it is clear that $[1 \otimes E_{j,j}]_0$ generates $K_0(C(S^1)\otimes M_{l_i}(\C))$, and therefore that  $[M_{\kappa_{l_i,j}}]_0$ generates $K_0(B_{S,i})$. 
\end{proof} 
 
\begin{lem}
\label{K_0_induced}
The induced map $K_0(\iota_i):K_0(B_{S,i}) \to K_0(B_{S,i+1})$ is given by 
\begin{equation*}
    K_0(\iota_i)\left[M_{\kappa_{l_i,0}}\right]_0 = \frac{l_{i+1}}{l_i} \left[M_{\kappa_{l_{i+1}},0}\right]_0
\end{equation*}
\end{lem}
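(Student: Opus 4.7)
The plan is to decompose $M_{\kappa_{l_i,0}}$ inside the larger algebra $B_{S,i+1}$ as a sum of mutually orthogonal projections of the form $M_{\kappa_{l_{i+1},j}}$, and then invoke (the proof of) Lemma \ref{K_0_gen} to collapse each summand to the same generator of $K_0(B_{S,i+1})\cong\Z$. Since $\iota_i$ is the set-theoretic inclusion $B_{S,i}\subseteq B_{S,i+1}$, the induced map $K_0(\iota_i)$ simply sends $[M_{\kappa_{l_i,0}}]_0$ to the class of the same projection viewed in $B_{S,i+1}$, so it suffices to re-express this class in terms of the chosen generator of $K_0(B_{S,i+1})$.

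The first step is the pointwise identity
\[
\kappa_{l_i,0} \;=\; \sum_{k=0}^{l_{i+1}/l_i - 1} \kappa_{l_{i+1},\,k l_i}
\]
on $\Z/S\Z$. This follows from the observation that an element $x\in\Z/S\Z$ satisfies $\pi_{l_i}(x)=0$ if and only if $\pi_{l_{i+1}}(x)\in\Z/l_{i+1}\Z$ lies in the fiber $\pi_{l_i,l_{i+1}}^{-1}(0)$, and (using $l_i\mid l_{i+1}$) this fiber is exactly the set of multiples $\{0,\,l_i,\,2l_i,\,\ldots,\,((l_{i+1}/l_i)-1)l_i\}$ in $\Z/l_{i+1}\Z$. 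Since the functions $\kappa_{l_{i+1},j}$ are indicators of disjoint cylinder sets, the corresponding multiplication operators are pairwise orthogonal projections, and the identity lifts to an operator decomposition
\[
M_{\kappa_{l_i,0}} \;=\; \sum_{k=0}^{l_{i+1}/l_i - 1} M_{\kappa_{l_{i+1},\,k l_i}}
\]
inside $B_{S,i+1}$.

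From here, additivity of $[\cdot]_0$ on orthogonal projections yields
\[
K_0(\iota_i)\bigl[M_{\kappa_{l_i,0}}\bigr]_0 \;=\; \sum_{k=0}^{l_{i+1}/l_i - 1}\bigl[M_{\kappa_{l_{i+1},\,k l_i}}\bigr]_0.
\]
By the proof of Lemma \ref{K_0_gen}, each $M_{\kappa_{l_{i+1},j}}$ corresponds under the isomorphism $B_{S,i+1}\cong C(S^1)\otimes M_{l_{i+1}}(\C)$ to $1\otimes E_{j,j}$, a rank-one diagonal projection; since all such rank-one projections in $M_{l_{i+1}}(\C)$ are Murray--von Neumann equivalent, every class $\bigl[M_{\kappa_{l_{i+1},j}}\bigr]_0$ equals $\bigl[M_{\kappa_{l_{i+1},0}}\bigr]_0$ in $K_0(B_{S,i+1})$. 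Substituting, the sum on the right-hand side collapses to $\tfrac{l_{i+1}}{l_i}\bigl[M_{\kappa_{l_{i+1},0}}\bigr]_0$, as claimed.

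I don't anticipate any serious obstacle; the only step requiring care is the combinatorial identification of the preimage $\pi_{l_i,l_{i+1}}^{-1}(0)$, which is precisely where the multiplicity factor $l_{i+1}/l_i$ enters. Everything else is standard additivity of $K_0$ classes combined with the already-established identification of the generators in Lemma \ref{K_0_gen}.
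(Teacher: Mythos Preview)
Your proposal is correct and follows essentially the same route as the paper: both arguments decompose $M_{\kappa_{l_i,0}}$ as the orthogonal sum $\sum_{k=0}^{l_{i+1}/l_i-1} M_{\kappa_{l_{i+1},kl_i}}$, use additivity of $K_0$ on mutually orthogonal projections, and then collapse every summand to the common generator $[M_{\kappa_{l_{i+1},0}}]_0$ via Lemma~\ref{K_0_gen}. Your version is slightly more explicit in justifying the last step (Murray--von Neumann equivalence of the rank-one diagonal projections $1\otimes E_{j,j}$), where the paper simply cites that each $[M_{\kappa_{l_{i+1},j}}]_0$ generates $K_0(B_{S,i+1})\cong\Z$.
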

\begin{proof}
Notice that $l_i | x$ if and only if $l_{i+1} | x$ or $l_{i+1} | (x-l_i)$ or $l_{i+1} | (x - 2l_i)$ or \dots or, finally, $l_{i+1} | \left(x - l_{i+1}-l_i \right) $. In other words, we have the formula 
\begin{equation*}
    M_{\kappa_{l_i,0}} = M_{\kappa_{l_{i+1},0}} + M_{\kappa_{l_{i+1}, l_i}} + M_{\kappa_{l_{i+1}, 2l_i}} + \ldots + M_{\kappa_{l_{i+1},\left(\frac{l_{i+1}}{l_i} - 1 \right)l_i}}.
\end{equation*}
Since $\left[M_{\kappa_{l_i,0}}\right]_0$ generates $K_0(B_{S,i})$ and for any $j$, $\left[M_{\kappa_{l_{i+1},j}}\right]_0$ generates $K_0(B_{S,i+1})$, we have the following calculation:
\begin{equation*}
    K_0(\iota_i)\left[M_{\kappa_{l_i,0}}\right]_0 = \left[M_{\kappa_{l_{i+1},0}} + M_{\kappa_{l_{i+1}, l_i}} + M_{\kappa_{l_{i+1}, 2l_i}} + \ldots + M_{\kappa_{l_{i+1},\left(\frac{l_{i+1}}{l_i} - 1 \right)l_i}}\right]_0
\end{equation*}
\begin{equation*}
 = \left[M_{\kappa_{l_{i+1},0}}\right]_0 + \left[M_{\kappa_{l_{i+1}, l_i}}\right]_0 + \left[M_{\kappa_{l_{i+1}, 2l_i}}\right]_0 + \ldots + \left[M_{\kappa_{l_{i+1},l_{i+1} - l_i}}\right]_0 = \frac{l_{i+1}}{l_i}\left[M_{\kappa_{l_{i+1},0}}\right]_0 
\end{equation*}
where we were able to separate the class of the sum into the sum of classes since $\{M_{\kappa_{l_{i+1},j}}\}$ are mutually orthogonal.
\end{proof} 
 
\begin{lem}
\label{K_1_calc}
The inclusion map $\iota_i:B_{S,i} \to B_{S,i+1}$ induces the identity map between $K_1(B_{S,i})$ and $K_1(B_{S,i+1})$.
\end{lem}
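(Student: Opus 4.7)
The plan is to identify a single canonical generator of $K_1(B_{S,i})$ which lies inside both $B_{S,i}$ and $B_{S,i+1}$ and is tautologically preserved by the inclusion $\iota_i$. The obvious candidate is $[U]_1$, since the unitary $U$ itself belongs to every $B_{S,i}$ (as $U$ commutes with $U^{l_i}$ for every $l_i$) and $\iota_i(U)=U$ by construction. The bulk of the work is then to verify that $[U]_1$ really does generate $K_1(B_{S,i})$ for every $i$.

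For that, I would make the isomorphism $B_{S,i} \cong C(S^1) \otimes M_{l_i}(\C)$ used in the previous lemmas concrete: decompose $H=\ell^2(\Z)=\bigoplus_{j=0}^{l_i-1}H_j$ into the closed subspaces $H_j$ spanned by $\{E_n : n\equiv j\pmod{l_i}\}$, and Fourier-transform each $H_j\cong L^2(S^1)$. Under this picture the central element $U^{l_i}$ becomes multiplication by the coordinate $z$ on each block, while $U$ itself becomes the cyclic permutation matrix with $1$'s on the lower subdiagonal and a single $z$ in the top-right corner. Expanding along the top row gives $\det(U)=(-1)^{l_i-1}z$, a unitary in $C(S^1)$ of winding number one; combined with the stability isomorphism $K_1(C(S^1)\otimes M_{l_i}(\C))\cong K_1(C(S^1))\cong\Z$ realized by the determinant, this identifies $[U]_1$ as a generator of $K_1(B_{S,i})$, and the same argument applied to $B_{S,i+1}$ identifies $[U]_1$ as a generator of $K_1(B_{S,i+1})$ as well.

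Since $\iota_i(U)=U$, functoriality yields $K_1(\iota_i)[U]_1=[U]_1$, so under the canonical identification $K_1(B_{S,i})\cong\Z$ that sends $[U]_1$ to $1$ (a choice consistent across $i$), the induced map is the identity $\Z\to\Z$. The only bit of care needed is checking that the sign factor $(-1)^{l_i-1}$ appearing in the determinant is killed in $K_1$ (since $-z$ and $z$ are homotopic through unitaries in $C(S^1)$), so the identification works uniformly across all $i$ and one obtains the identity rather than merely $\pm\mathrm{id}$.
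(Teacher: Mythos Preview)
Your proposal is correct and follows the same overall strategy as the paper: both arguments show that $[U]_1$ generates $K_1(B_{S,i})$ for every $i$ and then invoke $\iota_i(U)=U$. The difference lies only in how that generation is verified. The paper works ``forward'' from the standard generator: it takes the image of $[z]_1$ under the stability embedding, namely $[\textrm{diag}(z,1,\ldots,1)]_1$, and uses a Whitehead-type homotopy (Lemma~2.1.5 of \cite{RLL}) to identify this class with that of the scalar matrix $z^{1/l_i}\otimes 1_{l_i}$, which it then matches with $U$. You instead work ``backward'': you write $U$ explicitly as the cyclic permutation matrix with a single $z$ entry, compute $\det U=(-1)^{l_i-1}z$, and use that for commutative $A$ the determinant inverts the stability isomorphism on $K_1$. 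Your route is more concrete and sidesteps the somewhat informal notation $z^{1/l_i}$; the paper's route avoids having to justify that the determinant realizes the stability inverse. Both are standard and equally valid, and your remark about killing the sign $(-1)^{l_i-1}$ via the homotopy $e^{i\theta}z$ is exactly the care needed to get the identity rather than $\pm\textrm{id}$.
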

\begin{proof}
Denote by $z\in C(S^1)$ the map $S^1\ni z \mapsto z\in\C$. Recall the well known facts that $[z]_1$ generates $K_1(C(S^1))$, and that the map $ f \mapsto f \otimes E_{0,0} $ induces an isomorphism between $K_1(C(S^1))$ and $K_1(C(S^1) \otimes M_{l_i}(\C)) \cong K_1(M_{l_i}(C(S^1)))$ (see \cite{RLL} Example 11.3.4 and \cite{WO} Lemma 7.1.8). Since both algebras are unital, it is easily verified that under the above map $ f \mapsto f \otimes E_{0,0}$ we have the following correspondence:
\begin{equation*}
    [z]_1 \mapsto \left[ \begin{pmatrix}
    z & 0 \\
    0 & 1_{l_{i}-1}
    \end{pmatrix} \right]_1
\end{equation*}
(see \cite{RLL} exercise $8.5$). We can easily adapt Lemma 2.1.5 in \cite{RLL} to see that by homotopy the above class is the same as:
\begin{equation*}
    \left[ \begin{pmatrix}
    z^{1/l_i} & 0 & 0 & \cdots & 0 \\
    0 & z^{1/l_i} & 0 & \cdots & 0\\
    \vdots & \vdots & \vdots & \ddots & \vdots \\
    0 & 0 & 0 & \cdots & z^{1/l_i}
    \end{pmatrix} \right]_1.
\end{equation*}
Under the identification between $B_{S,i}$ and $C(S^1) \otimes M_{l_{i}}(\C)$, we see that $z^{1/l_i} \otimes 1_{l_i}$ corresponds to $U$. Since $\iota_i(U) = U$, we have that $\iota_i$ induces the identity map between $K_1(B_{S,i})$ and $K_1(B_{S,i+1})$.  
\end{proof}

\subsection{K-Homology and UCT}
Before computing the $K$-Homology of $B_S$, we need the following lemma describing homomorphisms from $K_0(B_S)$ to $\Z$.
\begin{lem}\label{HomLemma}
There are no nontrivial homomorphisms from $G_S$ to $\mathbb{Z}$. 
\end{lem}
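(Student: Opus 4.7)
The plan is to exploit divisibility in $G_S$: since $S$ is infinite, every element of $G_S$ is divisible by arbitrarily large integers inside $G_S$, whereas no nonzero integer has this property in $\mathbb{Z}$.

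First I would fix a group homomorphism $\phi: G_S \to \mathbb{Z}$ and an arbitrary element $x = k/l \in G_S$. For any positive integer $n$ such that $nl$ is still a (finite) divisor of $S$, the element $k/(nl)$ lies in $G_S$, and we have $x = n \cdot (k/(nl))$. Applying $\phi$ gives $\phi(x) = n\,\phi(k/(nl))$, so $n$ divides $\phi(x)$ in $\mathbb{Z}$.

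The second step is to show that such $n$ can be chosen arbitrarily large. Because $S$ is an infinite supernatural number, $\sum_p \varepsilon_p(S) = \infty$, so at least one of the following holds: either some prime $p$ satisfies $\varepsilon_p(S) = \infty$, or infinitely many primes $p$ satisfy $\varepsilon_p(S) \geq 1$. In the first case, for any fixed $l \mid S$ and any $k \geq 1$, the product $p^k l$ still divides $S$ (its $p$-exponent is at most $\varepsilon_p(l) + k < \infty = \varepsilon_p(S)$, and other prime exponents are unchanged), so $n = p^k$ works for all $k$. In the second case, choose distinct primes $p_1, p_2, \ldots$ with $\varepsilon_{p_j}(S) \geq 1$ and $p_j \nmid l$; then $n = p_1 p_2 \cdots p_N$ satisfies $nl \mid S$ for every $N$, giving arbitrarily large admissible $n$.

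Combining these two steps: $\phi(x) \in \mathbb{Z}$ is divisible by arbitrarily large positive integers, and the only such integer is $0$. Hence $\phi(x) = 0$ for every $x \in G_S$, and $\phi$ is trivial. There is no real obstacle here; the only point requiring care is the case analysis in the second step to verify that the supernatural condition genuinely produces unbounded admissible multipliers $n$ with $nl \mid S$, not merely $n \mid S$.
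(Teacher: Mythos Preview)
Your proof is correct and follows essentially the same divisibility argument as the paper: both show that the image of any element under $\phi$ is divisible by arbitrarily large integers and hence must be zero. The only cosmetic difference is that the paper packages the ``arbitrarily large divisor'' step by invoking an increasing sequence $l_i \to S$ with $l = l_1 \mid l_2 \mid \cdots$, whereas you spell out the case analysis on the prime factorization of $S$ directly.
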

\begin{proof}
Let $\phi: G_S \to \Z$ be a homomorphism. For each divisor $l$ of $S$, consider the subgroup 
\begin{equation*}
    G_l = \left\{\frac{k}{l} : k \in \Z\right\} \cong \Z. 
\end{equation*}
$\phi$ therefore must restrict to a homomorphism $\phi_l : G_l \to \Z$. Such a homomorphism is clearly determined by its action on $\frac{1}{l}$. Denote \begin{equation*}
    \phi \left(\frac{1}{l}\right) = \phi_l\left(\frac{1}{l}\right) : = a_l.
\end{equation*}
If $l'$ is another divisor of $S$ such that $l | l'$, clearly $G_l \subseteq G_{l'}$. Since $\frac{k(l'/l)}{l'} = \frac{k}{l}$, we have that 
\begin{equation}
\label{divisor}
    \left(l'/l \right) a_{l'} = a_l.
\end{equation}
With this in mind, let $l$ be any finite divisor of $S$, and let $\{l_i\}_{i=1}^\infty$ be a sequence such that $l_i | l_{i+1}$, $l_{i} < l_{i+1}$, $\lim_i l_i = S$, and $l_1 = l$. Suppose that $a_l \neq 0$. Then $a_l \in \Z$ is some finite integer such that $a_l$ is divisible by $(l_i/l)$ for every $i$. Since $l$ is finite and $\lim_i l_i$ is an infinite supernatural number, this is clearly a contradiction. Hence, $a_l = 0$ for every finite divisor $l$. Equation \ref{divisor} shows this holds for any divisor. 
\end{proof}
We are now ready to compute the $K$-homology of $B_S$. We will use the identification $KK^i(A,\mathbb{C}) : = K^i(A)$. Since $B_{S}$ is an inductive limit of $C(S^1) \otimes M_{l_i}(\mathbb{C})$, 
it is clear that $B_S$ falls into the bootstrap category (see \cite{BB}, page 228), and therefore the conclusion of the Universal Coefficient Theorem of Rosenberg and Schochet holds. Using this fact, the calculation of $K^0(B_S)$ is simple.
\begin{prop}
The K-Homology groups of $B_S$ are: 
$$K^0(B_S) = 0 \textrm{ and }K^1(B_S) = \mathbb{Z} \oplus \left( \left(\mathbb{Z}/S\mathbb{Z}\right)/\mathbb{Z} \right).$$
\end{prop}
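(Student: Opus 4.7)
The plan is to apply the Universal Coefficient Theorem of Rosenberg and Schochet, which, since $B_S$ lies in the bootstrap class as noted just before the statement, gives for each $i \in \{0, 1\}$ a short exact sequence
\begin{equation*}
0 \to \Ext^1_{\Z}(K_{i-1}(B_S), \Z) \to K^i(B_S) \to \Hom_{\Z}(K_i(B_S), \Z) \to 0.
\end{equation*}
Substituting the previously computed $K_0(B_S) = G_S$ and $K_1(B_S) = \Z$ (and using Bott periodicity to identify $K_{-1}$ with $K_1$) reduces the whole problem to four $\Hom$ and $\Ext^1$ computations involving the abelian groups $\Z$ and $G_S$.

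For $i = 0$, both outer groups vanish: $\Ext^1(\Z, \Z) = 0$ since $\Z$ is free, and $\Hom(G_S, \Z) = 0$ by Lemma \ref{HomLemma}. This gives $K^0(B_S) = 0$ immediately. For $i = 1$, the sequence becomes
\begin{equation*}
0 \to \Ext^1(G_S, \Z) \to K^1(B_S) \to \Hom(\Z, \Z) \to 0,
\end{equation*}
which splits because $\Hom(\Z, \Z) \cong \Z$ is free. Thus $K^1(B_S) \cong \Z \oplus \Ext^1(G_S, \Z)$, and the entire task reduces to showing $\Ext^1(G_S, \Z) \cong (\Z/S\Z)/\Z$.

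To compute $\Ext^1(G_S, \Z)$, I would apply $\Hom(-, \Z)$ to the tautological short exact sequence $0 \to \Z \to G_S \to G_S/\Z \to 0$. Since $G_S/\Z$ is torsion (each $k/l + \Z$ is killed by $l$), $\Hom(G_S/\Z, \Z) = 0$; together with $\Hom(G_S, \Z) = 0$ from Lemma \ref{HomLemma} and $\Ext^1(\Z, \Z) = 0$, the six-term sequence collapses to
\begin{equation*}
0 \to \Z \to \Ext^1(G_S/\Z, \Z) \to \Ext^1(G_S, \Z) \to 0.
\end{equation*}
I would then identify $\Ext^1(G_S/\Z, \Z)$ with the Pontryagin dual $\Hom(G_S/\Z, \Q/\Z)$ by applying $\Hom(G_S/\Z, -)$ to the injective resolution $0 \to \Z \to \Q \to \Q/\Z \to 0$ and using that $\Hom$ of a torsion group into either $\Z$ or $\Q$ vanishes. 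Since $G_S/\Z$ is the discrete torsion group Pontryagin dual to the profinite group $\Z/S\Z$, this yields $\Ext^1(G_S/\Z, \Z) \cong \Z/S\Z$, and the displayed sequence then identifies $\Ext^1(G_S, \Z)$ with the quotient $(\Z/S\Z)/\Z$ by the image of $\Z$.

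The main obstacle I expect is the identification $\Ext^1(G_S/\Z, \Z) \cong \Z/S\Z$, together with checking that the connecting homomorphism $\Z \to \Ext^1(G_S/\Z, \Z)$ really corresponds to the inclusion of the cyclic subgroup $q(\Z) \hookrightarrow \Z/S\Z$ so that the quotient is the one named in the statement. A Pontryagin-duality-free alternative is to write $G_S/\Z = \varinjlim_{l \mid S}\, (1/l)\Z/\Z$, compute each $\Ext^1(\Z/l\Z, \Z) \cong \Z/l\Z$ from the free resolution $0 \to \Z \xrightarrow{\;l\;} \Z \to \Z/l\Z \to 0$, and verify that the induced transition maps on Ext match the reduction maps defining $\Z/S\Z$ as an inverse limit; the example-based guide developed in the appendix (Section 6) should make this bookkeeping concrete.
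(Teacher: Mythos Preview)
Your proposal is correct and follows essentially the same route as the paper: apply the UCT, use $\Hom(G_S,\Z)=0$ and $\Ext^1(\Z,\Z)=0$ for $K^0$, split the $K^1$ sequence, and compute $\Ext^1(G_S,\Z)$ via the short exact sequence $0\to\Z\to G_S\to G_S/\Z\to 0$ together with the injective resolution $0\to\Z\to\Q\to\Q/\Z\to 0$ and Pontryagin duality. The paper writes the quotient $G_S/\Z$ directly as $\widehat{\Z/S\Z}$ via $k/l\mapsto e^{2\pi i k/l}$, but this is exactly your identification; and the obstacle you flag---that the UCT argument alone does not pin down which copy of $\Z$ is being quotiented out of $\Z/S\Z$---is precisely what the paper calls a ``pitfall'' of this computation and resolves separately via the Pimsner--Voiculescu sequence.
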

\begin{proof}
The Universal Coefficient Theorem states that we have an exact sequence: 
\begin{equation*}
    \begin{tikzcd}
    0 \arrow{r} & \Ext^1_\Z(K_1(B_S), \Z) \arrow{r} & K^0(B_S) \arrow{r} & \Hom(K_0(B_S),\Z) \arrow{r} & 0,
    \end{tikzcd}
\end{equation*}
We direct readers unfamiliar with computations involving $\Ext_\Z^1$ to the appendix. Since $\Ext^1_\Z(K_1(B_S), \Z) \cong 0$, we have an isomorphism $K^0(B_S) \cong \Hom(K_0(B_S),\Z)$. The latter group is isomorphic to $0$ by the preceding Lemma. This proves the first claim. For $K^1(B_S)$, again from the Universal Coefficient Theorem, we have an exact sequence 
\begin{equation*}
    \begin{tikzcd}
    0 \arrow{r} & \Ext^1_\Z(K_0(B_S), \Z) \arrow{r} & K^1(B_S) \arrow{r} & \Hom(K_1(B_S),\Z) \arrow{r} & 0,
    \end{tikzcd}
\end{equation*}
which splits unnaturally \cite{RS}. From the final example in the Appendix we have: 
$$\Ext^1_\Z(K_0(B_S), \Z) \cong \left(\mathbb{Z}/S\mathbb{Z}\right)/\mathbb{Z}.$$ 
Hence, we obtain $K^1(B_S) \cong \Z \oplus \left( \Z / S\Z \right)/ \Z $, by the splitting of the sequence.  
\end{proof}

\subsection{K-Homology and PV}
One pitfall of the above calculation is that we obtain no information regarding which subgroup of $\mathbb{Z}/S\mathbb{Z}$ was quotiented out. We amend this through the following explicit calculation that uses different techniques that have merit on their own. It turns out that the subgroup is the natural dense subgroup identified in Equation \ref{q_def}.

Recall that the Pimsner-Voiculescu $6$-term exact sequence for $K$-Homology of the crossed product $C(\Z/S\Z)\rtimes_\beta \Z$ reads:
\begin{equation*}
\begin{tikzcd}
 K^0(C(\mathbb{Z}/S\mathbb{Z})) \arrow{d}[swap]{\partial^1}   & K^0(C(\mathbb{Z}/S\mathbb{Z}))  \arrow{l}[swap]{1-\beta^*} & K^0(B_S) \arrow{l}[swap]{\iota^*}    \\
 K^1(B_S) \arrow{r}{\iota^*} & K^1(C(\mathbb{Z}/S\mathbb{Z})) \arrow{r}{1 - \beta^*} & K^1(C(\mathbb{Z}/S\mathbb{Z})) \arrow{u}{\partial^0}
\end{tikzcd}
\end{equation*}
The original formulation can be found in \cite{PV}. The notation $\beta^*$ we used above is for the map induced in K-Homology by $\beta$ from Equation \ref{beta_def}.

Since $\mathbb{Z} / S \mathbb{Z}$ is totally disconnected, it follows from Exercise 3.4 in \cite{RLL} that
$$K_0(C(\mathbb{Z}/S\mathbb{Z})) \cong C(\mathbb{Z}/S\mathbb{Z},\mathbb{Z})=\mathcal{E}(\Z / S\Z, \Z).$$ 
The last equality above follows because $\Z$ is discrete. 
Additionally, from \cite{KD} Example III.2.5, $C(\Z / S \Z)$ is AF.
Therefore, by Exercise 8.7 in \cite{RLL}, we have:
$$K_1(C(\Z / S\Z)) = 0.$$
Since $\mathcal{E}(\Z / S\Z, \Z)$ is free (see exercise 7.7.5 in \cite{HR}), using the Universal Coefficient Theorem we obtain 
$$K^1(C(\Z / S \Z)) = 0 \textrm{ and } K^0(C(\Z / S \Z)) = \Hom(\mathcal{E}(\Z / S\Z, \Z),\Z).$$
Putting together these identifications, we can rewrite the above exact sequence as follows:
\begin{equation*}
\begin{tikzcd}
0 \arrow{r}{\iota^*} & \Hom(\mathcal{E}(\Z / S\Z, \Z),\Z)  \arrow{r}{1-\beta^*} & \Hom(\mathcal{E}(\Z / S\Z, \Z),\Z) \arrow{r}{\partial^1} & K^1(B_S) \arrow{r} & 0. 
\end{tikzcd}
\end{equation*}
From exactness, we conclude the following identification:
\begin{equation*}
   K^1(B_S) \cong \Hom(\mathcal{E}(\Z / S\Z, \Z),\Z) / (\textrm{Im}(1 - \beta^*)).
\end{equation*}

In what follows we compute the range of $1 - \beta^*$. To do that we consider a more geometrically motivated identification of $\Hom(\mathcal{E}(\Z / S\Z, \Z),\Z)$, outlined presently. 

Borrowing from \cite{KMR}, given a supernatural number $S$, we consider a set:
$$V_S = \{(l,k) : l | S \textrm{ and } 0 \leq k < l \}.$$ 
We define the following subset of functions on $V_S$ with values in $\Z$: 
\begin{equation*}
    \Phi : = \left\{ \phi:V_S \to \mathbb{Z} : \phi(l,k) = \sum_{j=0}^{(l'/l) - 1} \phi(l', k+jl) \textrm{ whenever } l | l' \right\}.
\end{equation*} 
The set $\Phi$ is an Abelian group with respect to addition of functions.
We claim that $\Phi$ is naturally isomorphic to the group $\Hom(\mathcal{E}(\Z / S\Z, \Z),\Z)$ through the following identification. 

To each $\phi \in \Phi$, we associate a homomorphism $\psi_\phi \in \Hom(\mathcal{E}(\Z / S\Z, \Z),\Z)$ defined on characteristic functions $\kappa_{l,k}$, defined in Equation \ref{chi_def}, by  

\begin{equation*}
    \psi_\phi (\kappa_{l,k}) = \phi(l,k).
\end{equation*}
 Since for $f \in \mathcal{E}(\Z / S \Z,\Z)$, there exists $l$ such that $f(x + l ) = f(x)$ for all $x$, we can write 
\begin{equation*}
    f(x) = \sum_{0 \leq k < l} f(k) \kappa_{l,k}(x).
\end{equation*}
Therefore, any $\psi \in \Hom(\mathcal{E}(\Z / S\Z, \Z),\Z)$ is completely determined by the values $\psi(\kappa_{l,k})$. Due to the equality 
\begin{equation*}
    \kappa_{l,k}(x) = \sum_{j=0}^{(l'/l) - 1} \kappa_{l',k+jl}(x)
\end{equation*}
for any $l'$ such that $l | l'$, it is clear that any homomorphism $\psi$ can be associated to a unique element in $\Phi$. Hence, we have established the following proposition. 

\begin{prop} With above notation we have a group isomorphism:
$$\Phi \cong \Hom(\mathcal{E}(\Z / S\Z, \Z),\Z).$$
\end{prop}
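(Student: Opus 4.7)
The plan is to verify that the map $\Phi \to \Hom(\mathcal{E}(\Z/S\Z,\Z),\Z)$ defined by $\phi \mapsto \psi_\phi$ is a well-defined group isomorphism, with inverse given by $\psi \mapsto \phi_\psi$ where $\phi_\psi(l,k) := \psi(\kappa_{l,k})$. The proposition essentially decodes the compatibility condition built into the definition of $\Phi$ as precisely the cocycle-type condition needed to assemble a homomorphism out of values on the characteristic functions $\kappa_{l,k}$.

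First, I would verify that $\psi_\phi$ is well defined. Fix $\phi \in \Phi$. For each divisor $l \mid S$, the characteristic functions $\{\kappa_{l,k} : 0\le k < l\}$ form a $\Z$-basis of the subgroup $\mathcal{E}_l(\Z/S\Z,\Z) \subseteq \mathcal{E}(\Z/S\Z,\Z)$ of $l$-periodic integer-valued locally constant functions, so declaring $\psi_\phi(\kappa_{l,k}) = \phi(l,k)$ extends $\Z$-linearly to a homomorphism $\psi_\phi^{(l)}: \mathcal{E}_l(\Z/S\Z,\Z) \to \Z$. Since $\mathcal{E}(\Z/S\Z,\Z) = \bigcup_{l \mid S} \mathcal{E}_l(\Z/S\Z,\Z)$ as a directed union, I need the $\psi_\phi^{(l)}$ to be compatible under the inclusions $\mathcal{E}_l \hookrightarrow \mathcal{E}_{l'}$ for $l \mid l'$. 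Using the pointwise decomposition $\kappa_{l,k} = \sum_{j=0}^{(l'/l)-1}\kappa_{l',k+jl}$, compatibility amounts to the equation
\begin{equation*}
\phi(l,k) = \sum_{j=0}^{(l'/l)-1}\phi(l',k+jl),
\end{equation*}
which is exactly the condition defining membership in $\Phi$. Hence $\psi_\phi$ is a well-defined element of $\Hom(\mathcal{E}(\Z/S\Z,\Z),\Z)$, and the assignment $\phi \mapsto \psi_\phi$ is a group homomorphism because the construction is pointwise-linear in $\phi$.

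Second, for the inverse direction, given $\psi \in \Hom(\mathcal{E}(\Z/S\Z,\Z),\Z)$, I would set $\phi_\psi(l,k) := \psi(\kappa_{l,k})$ and verify $\phi_\psi \in \Phi$. Applying $\psi$ to the identity $\kappa_{l,k} = \sum_{j=0}^{(l'/l)-1}\kappa_{l',k+jl}$ gives the required relation. Injectivity of $\phi \mapsto \psi_\phi$ is immediate since $\psi_\phi = 0$ forces $\phi(l,k) = 0$ for every $(l,k) \in V_S$. Surjectivity follows from the fact that any $\psi$ is determined by its values on the characteristic functions (since these span $\mathcal{E}(\Z/S\Z,\Z)$), and that $\psi = \psi_{\phi_\psi}$ by construction.

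The only place that requires genuine care, and the step I regard as the main conceptual point rather than an obstacle, is the well-definedness in the first paragraph: one must confirm that the relations among the $\kappa_{l,k}$ inside $\mathcal{E}(\Z/S\Z,\Z)$ are generated precisely by the refinement relations $\kappa_{l,k} = \sum_{j}\kappa_{l',k+jl}$ for $l \mid l'$, so that the compatibility condition in $\Phi$ captures exactly the constraints a $\Z$-valued homomorphism must satisfy. This is clear from the directed-union description of $\mathcal{E}(\Z/S\Z,\Z)$ as the inductive limit of the free $\Z$-modules $\mathcal{E}_l(\Z/S\Z,\Z)$ along these refinement maps, which identifies $\Hom(\mathcal{E}(\Z/S\Z,\Z),\Z)$ with the inverse limit of $\Hom(\mathcal{E}_l,\Z)$, and $\Phi$ is manifestly this inverse limit.
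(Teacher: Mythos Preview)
Your proposal is correct and follows essentially the same approach as the paper: the paper's argument (given in the paragraphs immediately preceding the proposition) also defines the map via $\psi_\phi(\kappa_{l,k}) = \phi(l,k)$, uses the decomposition $f = \sum_{k} f(k)\kappa_{l,k}$ to see that any homomorphism is determined by its values on the $\kappa_{l,k}$, and invokes the refinement relation $\kappa_{l,k} = \sum_j \kappa_{l',k+jl}$ to match the compatibility condition defining $\Phi$. Your write-up is in fact more detailed than the paper's, particularly in making explicit the directed-union/inverse-limit structure that guarantees well-definedness.
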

The goal now is to compute the range of $1-\beta^*$ under this identification. Our method will be to define a surjective map $\Phi \to \Z \oplus (\Z / S\Z )/\Z$ whose kernel is precisely $\Im(1 - \beta^*)$. To define such a map, we need the following observation. 

For $l,l'$ such that $l|l'|S$ and $\phi\in \Phi$, we define an integer
\begin{equation*}
    R\phi(l,l') : = \sum_{a = 1}^{(l'/l)-1} \sum_{j=0}^{al-1} \phi(l',j).
\end{equation*}
By convention, a summation from bigger to smaller index value is set to be zero, so that $R\phi(l,l)=0$. The quantity $R\phi(l,l')$ above has the following key property.
\begin{lem}
\label{consistency_condition} With $l,l',\phi$ as above we have:
$R\phi(1,l') - R\phi(1,l) = l R\phi(l,l').$
\end{lem}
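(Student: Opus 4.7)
Plan. Set $N = l'/l$, so the consistency condition reads $\phi(l,k) = \sum_{i=0}^{N-1}\phi(l',k+il)$ for $0 \le k < l$. The strategy is a direct index manipulation: I will expand $R\phi(1,l')$ by decomposing the outer summation index $c$ with $1 \le c \le l'-1$ uniquely as $c = bl + a$ with $0 \le b \le N-1$ and $0 \le a \le l-1$, then splitting the inner sum.

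First step. Writing $c = bl+a$, I split
\begin{equation*}
\sum_{j=0}^{bl+a-1}\phi(l',j) = \sum_{j=0}^{bl-1}\phi(l',j) + \sum_{j=bl}^{bl+a-1}\phi(l',j),
\end{equation*}
where the first summand is $0$ when $b=0$ and the second is $0$ when $a=0$ (by the summation convention stated in the excerpt). This decomposes $R\phi(1,l')$ into two double sums, which I will handle separately.

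Second step. The first piece, $\sum_{b,a}\sum_{j=0}^{bl-1}\phi(l',j)$, has integrand independent of $a$. For each fixed $b \in \{1,\ldots,N-1\}$ the index $a$ runs through the full range $\{0,1,\ldots,l-1\}$ (the edge case $b=0$ drops out since the $j$-sum is empty), yielding a factor of $l$ and producing exactly $l\,R\phi(l,l')$. The second piece, $\sum_{b,a}\sum_{j=bl}^{bl+a-1}\phi(l',j)$, is reindexed by $j = bl+k$ with $0 \le k \le a-1$; switching the order of summation and applying the consistency condition
\begin{equation*}
\sum_{b=0}^{N-1}\phi(l',bl+k) = \phi(l,k)
\end{equation*}
turns it into $\sum_{a=1}^{l-1}\sum_{k=0}^{a-1}\phi(l,k) = R\phi(1,l)$. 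Combining, $R\phi(1,l') = l\,R\phi(l,l') + R\phi(1,l)$, which is the claim.

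Main obstacle. The only delicate point is the bookkeeping in the range of the pair $(b,a)$: the constraint $1 \le bl+a \le Nl-1$ means that for $b=0$ we must take $a \ge 1$, while for $b \ge 1$ the full range $a \in \{0,\ldots,l-1\}$ is available. One has to verify that the empty-sum convention absorbs these edge cases consistently so that the two pieces separate cleanly, but this is purely combinatorial and requires no new idea beyond the consistency relation.
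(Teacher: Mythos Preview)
Your proof is correct and follows essentially the same approach as the paper: both decompose the outer index $c$ of $R\phi(1,l')$ as $c=bl+a$, split the inner $j$-sum at $j=bl$, and identify the two resulting pieces as $l\,R\phi(l,l')$ and $R\phi(1,l)$ (the latter via the consistency relation $\phi(l,k)=\sum_b\phi(l',k+bl)$). The only difference is cosmetic ordering---the paper subtracts $R\phi(1,l)$ first and then simplifies, while you expand $R\phi(1,l')$ and recognize the pieces---but the underlying computation is identical.
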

\begin{proof}
We begin with the left-hand-side. Note first that in general we have the following summation decomposition for any function $f$:
$$\sum_{i=0}^{l'-1} f(i) = \sum_{b=0}^{(l'/l)-1} \sum_{j=0}^{l-1} f(bl+j).$$ 
Using this observation, we compute:
\begin{equation*}
   R\phi(1,l') - R\phi(1,l)  = \sum_{i=1}^{l'-1} \left(\sum_{j=0}^{i-1} \phi(l',j) \right) - \sum_{a=1}^{l-1} \sum_{j=0}^{a-1} \phi(l,j) = 
\end{equation*}
\begin{equation*}
   \sum_{b=0}^{(l'/l)-1} \sum_{a=0}^{l-1} \sum_{j=0}^{bl+a-1} \phi(l',j)  - \sum_{b=0}^{(l'/l)-1} \sum_{a=0}^{l-1} \sum_{j=bl}^{bl+a-1} \phi(l',j) = \sum_{a=0}^{l-1} \sum_{b=0}^{(l'/l)-1}  \sum_{j=bl}^{bl+a-1} \phi(l',j)
\end{equation*}
\begin{equation*}
    = l \left( \sum_{b=0}^{(l'/l)-1} \sum_{j=0}^{bl-1} \phi(l',j) \right) = lR\phi(l,l').
\end{equation*}
\end{proof}
As a corollary, we obtain the following congruence relation.
\begin{cor}
\label{R_cor}
$R\phi(1,l) \equiv R\phi(1,l')\  \textrm{mod}(l)$
\end{cor}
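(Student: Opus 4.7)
The plan is to derive this directly from Lemma \ref{consistency_condition}. That lemma states
\begin{equation*}
R\phi(1,l') - R\phi(1,l) = l\, R\phi(l,l').
\end{equation*}
Since $R\phi(l,l')$ is an integer (being a finite sum of integer values of $\phi$), the right-hand side is an integer multiple of $l$. Reducing modulo $l$ therefore yields $R\phi(1,l') \equiv R\phi(1,l) \pmod{l}$, which is exactly the claimed congruence.

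In summary, the corollary is essentially a one-line rephrasing of the preceding lemma; there is no real obstacle, only the observation that $R\phi(l,l')\in\Z$. I would write the proof as a single sentence invoking Lemma \ref{consistency_condition} and noting that $l\,R\phi(l,l')$ is divisible by $l$.
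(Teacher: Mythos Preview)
Your proposal is correct and matches the paper's intended approach: the corollary is stated immediately after Lemma \ref{consistency_condition} with no separate proof, precisely because it follows in one line by reducing the identity $R\phi(1,l') - R\phi(1,l) = l\,R\phi(l,l')$ modulo $l$.
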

For convenience, we recall the definition of $\Z / S \Z$ as an inverse limit:  
$$\Z / S \Z = \left\{(x_l)_{l|S} : x_l \in \Z / l \Z \textrm{ and } x_l \equiv x_{l'}\textrm{ mod}(l )\textrm{ when } l|l'\right\}.$$ 
With this in mind, we define the following two group homomorphisms:
\begin{equation*}
    \tau: \Phi \to \Z \textrm{,} \quad \tau (\phi) := \phi(1,0)
\end{equation*}

\begin{equation*}
    \rho: \Phi \to \Z / S \Z \textrm{,} \quad \rho (\phi) := (R\phi(1,l))_{l|S}.
\end{equation*}
Due to Corollary \ref{R_cor}, the latter map is well defined. 

For our final result, we need to check that $\rho$ is surjective. This is verified in the following proposition. 
\begin{prop}
\label{rho_is_onto}
The homomorphism $\rho: \Phi \to \Z / S \Z$ is surjective. 
\end{prop}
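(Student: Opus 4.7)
The plan is to show surjectivity by explicit construction: given any $(x_l)_{l|S} \in \Z/S\Z$, we build a $\phi \in \Phi$ with $R\phi(1,l) \equiv x_l \pmod{l}$ for every $l \,|\, S$. Fix a chain of divisors $1 = l_0 \,|\, l_1 \,|\, l_2 \,|\, \cdots$ with $\lim_{i\to\infty} l_i = S$, and construct $\phi(l_i,\cdot)$ by induction on $i$, maintaining two invariants: the chain consistency $\phi(l_i,k) = \sum_{j=0}^{(l_{i+1}/l_i)-1}\phi(l_{i+1}, k+jl_i)$, and $R\phi(1,l_i) \equiv x_{l_i}\pmod{l_i}$. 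Then $\phi$ will be extended to all divisors and the congruence checked via Lemma \ref{consistency_condition}.

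The base case is $\phi(1,0) := 0$. For the inductive step, set $m := l_{i+1}/l_i$ and first take a preliminary extension $\phi_0(l_{i+1},k) := \phi(l_i,k)$ for $0 \le k < l_i$ and $\phi_0(l_{i+1},k) := 0$ for $l_i \le k < l_{i+1}$, which is consistent with $\phi(l_i,\cdot)$ by inspection. For any $t \in \Z$, the modification $\phi_0(l_{i+1},0) \mapsto \phi_0(l_{i+1},0) + t$ together with $\phi_0(l_{i+1},l_i) \mapsto \phi_0(l_{i+1},l_i) - t$ keeps the $(l_i,\cdot)$-sums unchanged, because both altered entries project to $k = 0$ under $k \mapsto k \bmod l_i$. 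Rewriting $R\phi(1,l_{i+1}) = \sum_{k=0}^{l_{i+1}-1}(l_{i+1}-1-k)\phi(l_{i+1},k)$, this modification shifts $R\phi(1,l_{i+1})$ by exactly $l_i t$. By Lemma \ref{consistency_condition} applied to $\phi_0$ and the inductive hypothesis, $R\phi_0(1,l_{i+1}) \equiv R\phi_0(1,l_i) \equiv x_{l_i} \equiv x_{l_{i+1}} \pmod{l_i}$, so $x_{l_{i+1}} - R\phi_0(1,l_{i+1})$ lies in the subgroup $l_i\Z/l_{i+1}\Z$; choosing $t$ to realize this difference completes the inductive step.

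To finish, extend $\phi$ to an arbitrary $l \,|\, S$ by picking any $i$ with $l \,|\, l_i$ and setting $\phi(l,k) := \sum_{j=0}^{(l_i/l)-1}\phi(l_i, k+jl)$; a short telescoping argument using the chain consistency shows this is independent of $i$ and that the full consistency condition defining $\Phi$ holds. Finally, for any $l \,|\, S$, pick $i$ with $l \,|\, l_i$ and apply Lemma \ref{consistency_condition} iteratively to obtain $R\phi(1,l) \equiv R\phi(1,l_i) \equiv x_l \pmod{l}$, giving $\rho(\phi) = (x_l)$. The main obstacle is the inductive step: the single-parameter $t$-family of modifications must land in the correct class in $\Z/l_{i+1}\Z$, which works precisely because Lemma \ref{consistency_condition} together with the compatibility $x_{l_{i+1}} \equiv x_{l_i} \pmod{l_i}$ places the target in the very coset swept out by the $l_i t$ shifts.
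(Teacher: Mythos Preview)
Your proof is correct. Both you and the paper fix a chain $1=l_0\,|\,l_1\,|\,l_2\,|\,\cdots$ converging to $S$, construct $\phi$ along this chain, and then extend by the consistency relations; the difference lies in how $\phi(l_n,\cdot)$ is produced. The paper writes $x=\sum_{n\ge1}a_n l_{n-1}$ in a digit expansion and gives a closed formula: $\phi(l_n,0)=a_1-a_2-\cdots-a_n$, $\phi(l_n,l_k)=a_{k+1}$ for $1\le k\le n-1$, and $\phi(l_n,j)=0$ otherwise, then checks directly that $R\phi(1,l_n)$ reduces to the correct residue. Your argument is instead an inductive correction scheme: extend trivially from level $l_i$ to $l_{i+1}$ and then shift the pair of entries at positions $0$ and $l_i$ by $\pm t$, using that this preserves the $l_i$-level sums while moving $R\phi(1,l_{i+1})$ by exactly $l_it$; Lemma~\ref{consistency_condition} guarantees the required target lies in $l_i\Z$, so a suitable $t$ exists. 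The paper's route is shorter once one has guessed the right formula, and gives an explicit preimage one can write down; your route is more conceptual and makes transparent \emph{why} a preimage must exist, since it isolates the single obstruction at each step and shows Lemma~\ref{consistency_condition} kills it. One small remark: when you invoke Lemma~\ref{consistency_condition} for $\phi_0$ during the induction, $\phi_0$ is not yet a full element of $\Phi$, but the proof of that lemma only uses the consistency relation between the two levels $l$ and $l'$ in question, which you have arranged, so the application is legitimate.
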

\begin{proof}
We begin by rewriting $R\phi(1,l)$ more conveniently as 
\begin{equation}\label{R_formula}
    R\phi(1,l) = \sum_{j=0}^{l-2}(j+1)\phi(l,j).
\end{equation}
We consider a sequence of divisors $\{l_i \}_{i=1}^\infty$ such that $l_i | l_{i+1}$, $l_i < l_{i+1}$, and $\lim_i l_i = S$. Then, similarly to the usual expansion of a p-adic integer, we can represent an arbitrary element $x$ in $\mathbb{Z} / S \Z$ by 
\begin{equation*}
x=    \sum_{n=1}^\infty a_{n} l_{n-1} 
\end{equation*}
with $l_0 = 1$ and ``digits" $0\leq a_n\leq l_n/l_{n-1}$.   
Given $x\in \mathbb{Z} / S \Z$ we use the above expansion to define $\phi: V_S \to \Z$ in the following way: 
\begin{equation*}
    \phi(l_n,0) = a_1 - a_2 - \ldots - a_n.
\end{equation*}
\begin{equation*}
    \phi(l_n,l_k) = a_{k+1} \qquad k = 1, 2, \ldots , n-1.
\end{equation*}
\begin{equation*}
    \phi(l_n,j) = 0 \qquad \textrm{ otherwise. }
\end{equation*}
It is not difficult to check that $\phi$ indeed defines a function in $\Phi$. Moreover, we have:
\begin{equation*}
    R\phi(1,l_n) = \sum_{j=0}^{l_n-2} (j+1) \phi(l_n,j) = a_1 - a_2 - \ldots - a_n + \sum_{i=1}^{n-1} (l_i + 1) a_{i+1}
\end{equation*}
\begin{equation*}
    = a_1 + a_2l_1 + \ldots a_n l_{n-1} \equiv \left(\sum_{j=1}^\infty a_{j} l_{j-1} \right)\ \textrm{mod}(l_n).
\end{equation*}
This completes the proof. 
\end{proof}
We are now ready to explicitly describe the K-Homology group $K^1(B_S)$. Consider the natural dense subgroup $\mathbb{Z} \subset \Z /S \Z$, described in Section 2.  Quotienting out by this subgroup, we obtain a map $\tilde{\rho}: \Phi \to (\Z /S \Z)/ \Z $ defined by 
\begin{equation*}
    \tilde{\rho}(\phi) : = \rho(\phi) + \Z. 
\end{equation*}
This, along with the map $\tau: \Phi \to \Z$ defined above, leads us to the final result of this section. 
\begin{theo}
With the above notation we have: $\textrm{Im}(1 - \beta^*) = \textrm{Ker}(\tau \oplus \tilde{\rho})$. Consequently, we have isomorphisms: 
$$K^1(B_S) \cong \Phi / \textrm{ Ker }(\tau \oplus \tilde{\rho})\cong     \Z \oplus \left( (\mathbb{Z}/S \mathbb{Z}) / \Z \right).$$
\end{theo}
\begin{proof}
It is easily verified that 
$$(\beta^*\phi)(l,k) = \phi(l, k+1)\textrm{ mod}(l)).$$
Note that, in addition, it is clear that $\phi \in \textrm{ Ker }(\tau \oplus \tilde{\rho})$ if and only if $\tau(\phi) = 0$ and there exists some number $\psi(1,0) \in \Z$ with 
$$\rho(\phi) + \psi(1,0) = 0$$ in $\mathbb{Z}/ S \mathbb{Z}$. The last equation is equivalent to the following congruences for each divisor $l|S$:
$$\rho(\phi) + \psi(1,0) \equiv 0 \textrm{ mod}(l).$$  

Letting $\phi = (1- \beta^*)\psi$, we have that 
\begin{equation*}
    \tau(\phi) = \psi(1,0) - \psi(1,1 \textrm{ mod}(1)) = 0.
\end{equation*}
Notice additionally that we have the following formula:
\begin{equation*}
    R\phi(1,l) = \left(\sum_{a=1}^{l-1} \sum_{j=0}^{a-1} \phi(l,j) \right)  = \left(\sum_{a=1}^{l-1} \sum_{j=0}^{a-1} \psi(l,j) - \psi(l,j+1) \right) = 
\end{equation*}
\begin{equation*}
    (l-1)(\psi(l,0)-\psi(l,1)) + (l-2)(\psi(l,1) - \psi(l,2)) + \dots
\end{equation*}
\begin{equation*}
    \ldots + 2(\psi(l,l-3)-\psi(l,l-2) + (\psi(l-2)-\psi(l,l-1))
\end{equation*}
\begin{equation*}
    = l\psi(l,0) - \sum_{j=0}^{l-1}\psi(l,j) = l\psi(l,0) - \psi(1,0).
\end{equation*}
This establishes that $ \textrm{Im}(1 - \beta^*) \subseteq \textrm{Ker}(\tau \oplus \tilde{\rho})$. 

For the other direction, let $(\tau \oplus \tilde{\rho}) \phi = 0$. We define $\psi \in \Phi$ such that $(1-\beta^*)\psi = \phi$ in the following way. First, let $\psi(1,0)$ be the number in $\Z/l\Z$ such that 
$$\rho(\phi) + \psi(1,0) \equiv 0 \textrm{ mod}(l).$$ 
For each $l$, let 
\begin{equation}\label{psi_def}
\psi(l,0) = \frac{R\phi(1,l) + \psi(1,0)}{l}. 
\end{equation}
We can then inductively define $\psi(l,k)$ for all other values of $k$ by 
\begin{equation*}
  \psi(l,k) = \psi(l,0) - \sum_{j=0}^{k-1} \phi(l,j).
\end{equation*}
By construction, it is clear that we have:
$$\phi(l,k) = \psi(l,k) - \psi(l,(k+1) \textrm{ mod}(l)).$$
It therefore only remains to check that $\psi$ represents an element in $\Phi$. i.e., we check that 
\begin{equation}
\label{inPhi}
    \psi(l,k) = \sum_{b=0}^{(l'/l) -1} \psi(l', k + bl).
\end{equation}
Notice that the desired equality holds if and only if 
\begin{equation*}
    \psi(l,0) - \sum_{j=0}^{k-1} \phi(l,j) = \frac{l'}{l} \psi(l',0) - \sum_{b=0}^{(l'/l)-1} \sum_{j=0}^{k+bl-1} \phi(l',j).
\end{equation*}
Using that $\phi(l,j) = \sum_{b=0}^{(l'/l)-1} \phi(l',j+bl)$ and cancelling, this is equivalent to 
\begin{equation*}
    \psi(l,0) = \frac{l'}{l}\psi(l',0) - R\phi(l,l').
\end{equation*}
Rearranging, and using Equation \ref{psi_def} we see that  Equation \ref{inPhi} holds if and only if 
\begin{equation*}
    l R\phi(l,l') = R\phi(1,l') - R\phi(1,l) 
\end{equation*}
which was proved in Lemma \ref{consistency_condition}. 
Hence, we obtain:
\begin{equation}
\label{kernel_image}
    \textrm{Ker}(\tau \oplus \tilde{\rho}) = \textrm{Im}(1 - \beta^*). 
\end{equation}

Since $\rho$ is surjective, by Lemma \ref{rho_is_onto}, and by Equation \ref{R_formula}, $R\phi(1,l)$ does not depend on $\phi(1,0)$, it is clear that $\tau \oplus \tilde{\rho}$ is surjective. Hence, we have the following short exact sequence: 
\begin{equation*}
    \begin{tikzcd}
    0 \arrow{r} & \textrm{Ker}(\tau \oplus \tilde{\rho}) \arrow{r}{\iota} & \Phi \arrow{r}{\tau \oplus \tilde{\rho}} & \Z \oplus \left( (\mathbb{Z}/S \mathbb{Z}) / \Z \right)  \arrow{r} & 0.
    \end{tikzcd}
\end{equation*}
Pairing this information with Equation \ref{kernel_image}, as well as the Pimsner-Voiculescu $6$-term exact sequence, we obtain the following sequence of isomorphisms: 
\begin{equation*}
    \Z \oplus \left( (\mathbb{Z}/S \mathbb{Z}) / \Z \right)  \cong \Phi / \textrm{Ker}(\tau \oplus \tilde{\rho}) \cong \Phi / \textrm{Im}(1-\beta^*) \cong K^1(B_S).
\end{equation*}
This completes the proof. 
\end{proof}

\section{Appendix}
In this Appendix, we give a brief description of $\textrm{Ext}^i_{\mathbb{Z}}(G, \mathbb{Z})$, where $G$ is any Abelian group. There are two equivalent definitions of $\textrm{Ext}^i_\mathbb{Z}(G,\mathbb{Z})$ that are useful to us. The first is in terms of free resolutions of the group $G$.

\begin{defin}
Any Abelian group $G$ admits a resolution of the form of a short exact sequence:
\begin{equation*}
    \begin{tikzcd}
0 \arrow{r} & G_1  \arrow{r} & G_2 \arrow{r} & G \arrow{r} & 0 ,
\end{tikzcd}
\end{equation*}
where $G_1$ and $G_2$ are free Abelian. We can then form the corresponding co-chain complex 
\begin{equation*}
    \begin{tikzcd}
0 \arrow{r} & \textrm{Hom}(G_2 ,\mathbb{Z})  \arrow{r}{g} & \textrm{Hom}(G_1, \mathbb{Z} )  \arrow{r} & 0 .
\end{tikzcd}
\end{equation*}
$\textrm{Ext}^i_{\mathbb{Z}}(G, \mathbb{Z})$ is defined to be the cohomology in the $i$-th position of this complex. Namely,
\begin{equation*}
    \textrm{Ext}_\mathbb{Z}^0 (G, \mathbb{Z}) : =  \textrm{Ker}( g) = \textrm{Hom}(G_2, \mathbb{Z}) \quad \textrm{ and } \quad \textrm{Ext}_\mathbb{Z}^1(G, \mathbb{Z}) = \textrm{Hom}(G_1, \mathbb{Z}) / \textrm{Im}(g) ,
\end{equation*}
while $\textrm{Ext}_\mathbb{Z}^i(G, \mathbb{Z}) = 0$ for all $i \geq 2$. 
\end{defin}

We also have the following equivalent definition.

\begin{defin}
Given an injective resolution of $\mathbb{Z}$ of the form
\begin{equation*}
    \begin{tikzcd}
0 \arrow{r} & \mathbb{Z}  \arrow{r} & I_1 \arrow{r} & I_2 \arrow{r} & 0 ,
\end{tikzcd}
\end{equation*}
where $I_1$ and $I_2$ are divisible Abelian,
we can form the following co-chain complex
\begin{equation*}
    \begin{tikzcd}
0 \arrow{r} & \textrm{Hom}(G, I_1)  \arrow{r}{f} & \textrm{Hom}(G, I_2 )  \arrow{r} & 0 .
\end{tikzcd}
\end{equation*}
We define $\textrm{Ext}_\mathbb{Z}^i(G, \mathbb{Z})$ to be the i-th cohomology of the above complex, that is, 
\begin{equation*}
    \textrm{Ext}_\mathbb{Z}^0 (G, \mathbb{Z}) : =  \textrm{Ker}( f) = \textrm{Hom}(G, I_1) \quad \textrm{ and } \quad \textrm{Ext}_\mathbb{Z}^1(G, \mathbb{Z}) = \textrm{Hom}(G, I_2) / \textrm{Im}(f).
\end{equation*}
\end{defin}
For convenience we now state some basic properties of $\textrm{Ext}^1_\mathbb{Z}(\cdot, \mathbb{Z})$. Proofs of the properties contained in the following proposition can be found in \cite{weibel}. 
\begin{prop} 
\label{homalg}
1)  Ext$^1_\mathbb{Z}(G,\mathbb{Z}) = 0 $ whenever $G$ is free Abelian. 

2) Given a short exact sequence $0 \to K \to G \to H \to 0$, one obtains the following exact sequence: 
\begin{center}
\begin{tikzcd}
    0 \arrow{r} & \Hom(H,\mathbb{Z}) \arrow{r} & \Hom(G,\mathbb{Z}) \arrow{r} & \Hom(K,\mathbb{Z}) 
\end{tikzcd}\end{center} 
\begin{center}
\begin{tikzcd}
   \arrow{r} & \Ext^1_\mathbb{Z}(H,\mathbb{Z})  
   \arrow{r} & \Ext^1_\mathbb{Z}(G,\mathbb{Z}) \arrow{r} & \Ext^1_\mathbb{Z}(K,\mathbb{Z}) \arrow{r} & 0
\end{tikzcd}
\end{center}

3) $\textrm{Ext}^1_\mathbb{Z} \left(\bigoplus_i G_i, \mathbb{Z} \right) \cong \prod_i \textrm{Ext}(G_i,\mathbb{Z})$.
\end{prop}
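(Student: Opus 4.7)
The plan is to work directly from the two equivalent definitions of $\textrm{Ext}^1_\mathbb{Z}$ already recorded in the appendix, handling the three parts in the order (1), (3), (2). For part (1), if $G$ is free Abelian then the tautological short exact sequence $0 \to 0 \to G \to G \to 0$ is already a free resolution in the sense of the first definition. Applying $\Hom(-,\mathbb{Z})$ yields the complex whose term in degree one is the zero group, so its cohomology there is zero, giving $\textrm{Ext}^1_\mathbb{Z}(G,\mathbb{Z})=0$.

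For part (3), the key observation is that a direct sum of free resolutions is a free resolution of the direct sum. Concretely, for each $i$ pick a two-term free resolution $0 \to F_i^1 \to F_i^0 \to G_i \to 0$; summing termwise produces $0 \to \bigoplus_i F_i^1 \to \bigoplus_i F_i^0 \to \bigoplus_i G_i \to 0$, still exact and still consisting of free Abelian groups. Applying $\Hom(-,\mathbb{Z})$ and using the universal property $\Hom(\bigoplus_i A_i,\mathbb{Z}) \cong \prod_i \Hom(A_i,\mathbb{Z})$, the resulting cochain complex is the termwise product of the individual cochain complexes. Since cohomology commutes with arbitrary products of complexes of Abelian groups (the product functor is exact on Abelian groups), computing the degree-one cohomology yields $\prod_i \textrm{Ext}^1_\mathbb{Z}(G_i,\mathbb{Z})$.

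For part (2), I would use the second (injective) definition, taking advantage of the very short injective resolution $0 \to \mathbb{Z} \to \mathbb{Q} \to \mathbb{Q}/\mathbb{Z} \to 0$ (with $\mathbb{Q}$ and $\mathbb{Q}/\mathbb{Z}$ divisible, hence injective in the category of Abelian groups). Given the short exact sequence $0 \to K \to G \to H \to 0$, apply $\Hom(-,\mathbb{Q})$ and $\Hom(-,\mathbb{Q}/\mathbb{Z})$; injectivity of both targets makes each of the resulting sequences short exact. These assemble into a commutative diagram with exact rows
\begin{equation*}
\begin{tikzcd}
0 \arrow{r} & \Hom(H,\mathbb{Q}) \arrow{r}\arrow{d} & \Hom(G,\mathbb{Q}) \arrow{r}\arrow{d} & \Hom(K,\mathbb{Q}) \arrow{r}\arrow{d} & 0 \\
0 \arrow{r} & \Hom(H,\mathbb{Q}/\mathbb{Z}) \arrow{r} & \Hom(G,\mathbb{Q}/\mathbb{Z}) \arrow{r} & \Hom(K,\mathbb{Q}/\mathbb{Z}) \arrow{r} & 0
\end{tikzcd}
\end{equation*}
whose vertical maps are the differentials $f$ of the defining cochain complex for each of $H$, $G$, $K$. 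By the second definition, the kernels of the verticals are the $\Hom(-,\mathbb{Z})$ groups and the cokernels are the $\textrm{Ext}^1_\mathbb{Z}(-,\mathbb{Z})$ groups. The Snake Lemma then produces precisely the six-term exact sequence claimed in (2).

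The main technical point, and the only nontrivial step, is verifying that $\Hom(-,\mathbb{Q})$ and $\Hom(-,\mathbb{Q}/\mathbb{Z})$ preserve short exactness, i.e., that $\mathbb{Q}$ and $\mathbb{Q}/\mathbb{Z}$ are injective; for this one uses Baer's criterion together with divisibility, which is standard and may be cited. Everything else is a straightforward application of the Snake Lemma and of exactness of $\prod$ on Abelian groups, so I expect no further obstacles.
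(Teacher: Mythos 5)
Your proposal is correct. The paper itself offers no argument for this proposition --- it simply refers the reader to Weibel's book --- so any honest proof is ``a different route.'' What you supply is the standard self-contained derivation, and each step checks out: for (1), taking $0\to 0\to G\to G\to 0$ as the free resolution (with $0$ free on the empty basis) immediately kills the degree-one term; for (3), a termwise direct sum of free resolutions resolves $\bigoplus_i G_i$, $\Hom(-,\mathbb{Z})$ turns the sum into a product, and exactness of arbitrary products in the category of Abelian groups lets you compute cohomology factorwise; for (2), the injective resolution $0\to\mathbb{Z}\to\mathbb{Q}\to\mathbb{Q}/\mathbb{Z}\to 0$ together with injectivity of divisible groups (Baer) makes both rows of your diagram short exact, the kernels and cokernels of the vertical maps are exactly $\Hom(-,\mathbb{Z})$ and $\Ext^1_{\mathbb{Z}}(-,\mathbb{Z})$ by left exactness of $\Hom$, and the Snake Lemma yields the six-term sequence. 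Two small points worth flagging if you were to write this up: you silently use that the two definitions of $\Ext^1_{\mathbb{Z}}$ recorded in the appendix agree and are independent of the chosen resolution (you compute (1) and (3) projectively but (2) injectively), and your reading of the appendix's definitions quietly corrects a slip there, since the kernel of the dualized differential is $\Hom(G,\mathbb{Z})$ rather than the full degree-zero term as the paper writes. Neither issue is a gap in your argument; the proof is sound and arguably more informative than the citation the paper gives.
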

Now we present some example calculations of $\textrm{Ext}^1_\Z(G,\Z)$ for groups $G$ relevant for this paper.

\begin{example}
$\textrm{Ext}^1_\mathbb{Z}(\mathbb{Z} /n \mathbb{Z}, \mathbb{Z}) \cong \mathbb{Z} / n \mathbb{Z}$.
\end{example}
\begin{proof}
We have the following free resolution:
\begin{equation*}
    \begin{tikzcd}
0 \arrow{r} & \mathbb{Z} \arrow{r}{n} & \mathbb{Z}  \arrow{r} & \mathbb{Z}/n\mathbb{Z} \arrow{r} & 0.
\end{tikzcd}
\end{equation*}
Dualizing and identifying $\textrm{Hom}(\mathbb{Z}, \mathbb{Z}) \cong \mathbb{Z}$, we obtain 
\begin{equation*}
\begin{tikzcd}
0 & \arrow{l}[swap]{\phi}  \mathbb{Z}  & \arrow{l}[swap]{n} \mathbb{Z}  & \arrow{l}  0,
\end{tikzcd}
\end{equation*}
and we see that we have: 
$$\textrm{Ext}^1_\mathbb{Z}(\mathbb{Z} /n \mathbb{Z}, \mathbb{Z}) = \textrm{Ker} (\phi) / \textrm{Im} (n) \cong \Z / n \Z.$$
\end{proof}

\begin{example}
$ \textrm{Ext}^1_\mathbb{Z} (\widehat{\mathbb{Z} / S \mathbb{Z}}, \mathbb{Z}) \cong \mathbb{Z} / S\mathbb{Z}$, where $\widehat{\mathbb{Z} / S \mathbb{Z}}$ is the Pontryagin dual of $\mathbb{Z} / S \mathbb{Z}$. 
\end{example}
\begin{proof}
It is easy to see \cite{M} that we can make the identification: 
$$\widehat{\mathbb{Z}/S\mathbb{Z}} \cong \{z \in S^1 : z^l = 1 \textrm{ for some } l | S \textrm{ } \}.$$ 
In particular, this is a pure torsion group and we therefore have that $\textrm{Hom}(\widehat{\mathbb{Z}/S\mathbb{Z}}, \mathbb{Z}) \cong 0$. From the second definition of $\textrm{Ext}^1_\mathbb{Z}(\cdot, \mathbb{Z})$ with $I_1 = \Q$, $I_2 = \Q / \mathbb{Z}$,
\begin{equation*}
    \textrm{Ext}^1_\mathbb{Z}(\widehat{\Z/S\Z}, \mathbb{Z}) = \textrm{Hom}(\widehat{ \Z/S\Z }, \Q / \mathbb{Z}) / \textrm{Im}(0) \cong \textrm{Hom}(\widehat{Z/S\Z}, \Q / \mathbb{Z}) \cong \widehat{\widehat{\Z / S \Z}}.  
\end{equation*}
Since the double Pontryagin dual is isomorphic to the original group, we obtain the result.  
\end{proof}

Note that the above calculation holds if $\widehat{\Z / S \Z}$ is replaced by any pure torsion group. 

\begin{example}
$\textrm{Ext}_\mathbb{Z}^1( G_S, \mathbb{Z}) \cong (\Z / S \Z) / \Z$.  Here $G_S : =\left \{ k/l \in \Q : k\in\Z, l | S\right\}$ is the group from the previous section.
\end{example}
\begin{proof}
Notice that we have the following short exact sequence: 

\begin{equation*}
    \begin{tikzcd}
0 \arrow{r} & \mathbb{Z} \arrow{r} & G_S  \arrow{r}{\phi} & \widehat{\Z/S\Z} \arrow{r} & 0
\end{tikzcd}
\end{equation*}
where $\phi(k/l) = e^{2 \pi i k / l}$.  Hence, by 2) of Proposition \ref{homalg}, we can obtain the exact sequence
\begin{equation*}
    0 \to \textrm{Hom}(G_S,\mathbb{Z}) \to \Z \to \textrm{Ext}^1_\mathbb{Z}(\widehat{\Z / S \Z},\mathbb{Z}) \to \textrm{Ext}^1_\mathbb{Z}(G_S,\mathbb{Z}) \to 0
\end{equation*}
where we used that $\textrm{Ext}^1_\mathbb{Z}(\mathbb{Z}, \Z) \cong 0$ since $\Z$ is free. We have also seen in Lemma \ref{HomLemma} that $\textrm{Hom}(G_S, \Z) \cong 0$, so the above sequence is short exact. Therefore, by the previous example:
$$\textrm{Ext}^1_\mathbb{Z}(G_S, \mathbb{Z}) \cong \textrm{Ext}^1_\mathbb{Z}(\widehat{\Z / S \Z},\mathbb{Z})/ \Z \cong  (\Z / S \Z) / \Z.$$
\end{proof}

\end{document}